\documentclass[12pt, reqno]{amsart}
\usepackage{amssymb,xy,color, doi}
\usepackage[dvipsnames]{xcolor}
\xyoption{all}
\usepackage{ytableau,hyperref}
\usepackage[capitalise]{cleveref}
\hypersetup{
	colorlinks = true,
	linkcolor = blue,
	anchorcolor = blue,
	citecolor = blue,
	filecolor = blue,
	urlcolor = blue
}

\usepackage{caption}
\usepackage{amsmath}
\usepackage{MnSymbol}
\usepackage{tikz}
\usepackage{tkz-graph}
\tikzstyle{vertex}=[circle, draw, inner sep=0pt, minimum size=6pt]
\usetikzlibrary{shapes, cd}
\tikzcdset{every label/.append style = {font = \small}}

\usepackage{array, multirow, float}
\usepackage{relsize}

\usepackage[margin=1in]{geometry}                
\usepackage{diagbox}
\usetikzlibrary{matrix}
\usetikzlibrary{arrows.meta}
\usepackage{booktabs}
\usepackage{enumitem}   
\usepackage[normalem]{ulem}

\usepackage{comment}                        
\newcommand{\commentAlt}[1]{\ignorespaces}     
\newcommand{\commentLongAlt}[1]{\ignorespaces}  

\newcommand{\ZZ}{\mathbb Z}

\newcommand{\U}{\mathcal{U}} 
\newcommand{\LAT}{\mathsf{LAT}} 
\newcommand{\SVT}{\mathsf{SVT}} 
\newcommand{\SSYT}{\mathsf{SSYT}}
\newcommand{\GT}{\mathsf{GT}} 
\newcommand{\MGT}{\mathsf{MGT}} 

\newcommand{\states}{\mathfrak{S}}  
\newcommand{\decoratedstate}{\mathfrak{D}}
\newcommand{\G}{\mathfrak{G}}  
\newcommand{\RSK}{\mathrm{RSK}}
\DeclareMathOperator{\rd}{rd} 

\newcommand{\leftbrack}{\textcolor{blue}{[}}
\newcommand{\rightbrack}{\textcolor{red}{]}}

\newcommand{\wt}{\mathsf{wt}} 

\newcommand{\inner}[2]{\left\langle #1, #2 \right\rangle}

\definecolor{darkred}{rgb}{0.7,0,0} 
\newcommand{\defncolor}{\color{darkred}}
\newcommand{\defn}[1]{{\defncolor\emph{#1}}} 

\makeatletter
\def\revddots{\mathinner{\mkern1mu\raise\p@
\vbox{\kern7\p@\hbox{.}}\mkern2mu
\raise4\p@\hbox{.}\mkern2mu\raise7\p@\hbox{.}\mkern1mu}}
\makeatother

\usepackage[colorinlistoftodos]{todonotes}

\newtheorem{theorem}{Theorem}[section]
\newtheorem{lemma}[theorem]{Lemma}
\newtheorem{corollary}[theorem]{Corollary}

\newtheorem{proposition}[theorem]{Proposition}

\theoremstyle{definition}
\newtheorem{definition}[theorem]{Definition}

\newtheorem{example}[theorem]{Example}

\newtheorem{remark}[theorem]{Remark}

\numberwithin{equation}{section}

\title[Uncrowding the 5-vertex model: RSK and crystal structures]{Uncrowding the 5-vertex model:\\ RSK and crystal structures}

\author[Johnston]{Lisa Johnston}
\address[L. Johnston]{Department of Mathematics, University of California, One Shields
        Avenue, Davis, CA 95616-8633, U.S.A.}
\email{lisjohnston@ucdavis.edu}

\author[Nguyen]{Evuilynn Nguyen}
\address[E. Nguyen]{Department of Mathematics, University of California, One Shields
        Avenue, Davis, CA 95616-8633, U.S.A.}
\email{evtnguyen@ucdavis.edu}

\author[Schilling]{Anne Schilling}
\address[A. Schilling]{Department of Mathematics, University of California, One Shields
        Avenue, Davis, CA 95616-8633, U.S.A.}
\email{aschilling@ucdavis.edu}
\urladdr{\href{http://www.math.ucdavis.edu/~anne}{http://www.math.ucdavis.edu/~anne}}

\keywords{symmetric Grothendieck polynomials, set-valued tableaux, 5-vertex model, crystal bases, uncrowding algorithm}

\makeatletter

\begin{document}

\begin{abstract}
        While the uncrowding algorithm on set-valued tableaux has long been instrumental in proving the Schur positivity of symmetric Grothendieck polynomials, lattice models have emerged as a modern framework for investigating symmetric 
        functions, in particular symmetric Grothendieck polynomials. In this work, we synthesize these combinatorial and lattice-theoretic 
        approaches by defining both the Robinson--Schensted--Knuth (RSK) correspondence and the uncrowding operation directly 
        on a 5-vertex model of Motegi and Sakai. Our 
        lattice-based RSK formulation yields a powerful new result: the direct construction of the associated crystal structure on the 
        states of the 5-vertex model.
\end{abstract}

\maketitle

\tableofcontents

\section{Introduction}

Symmetric Grothendieck polynomials serve as a $K$-theoretic analogue to classical Schur functions, which originate from
the geometric study of Grassmannians. While standard Schur functions represent the cohomology classes of Schubert varieties, symmetric Grothendieck polynomials
represent the classes of their structure sheaves in $K$-theory. 
Grothendieck polynomials were first introduced by Lascoux and Sch\"utzenberger~\cite{LS.1982} and their stable limits (which includes the symmetric Grothendieck polynomials) were studied by Fomin and Kirillov~\cite{FK.1994}.
Combinatorially, just as Schur functions are defined as the generating functions
of semistandard Young tableaux, symmetric Grothendieck polynomials are famously realized as the generating functions of set-valued 
tableaux as established by Buch~\cite{Buch_2002}. Specifically, the symmetric Grothendieck polynomials in $n$ variables
$\mathbf{z} = (z_1,\ldots,z_n)$ are indexed by a partition $\lambda$ and can be defined as 
\begin{equation*}
    \G_\lambda(\mathbf{z};\beta):= \sum_{T \in \SVT^n(\lambda)} \beta^{\mathsf{ex}(T)} \mathbf{z}^{\wt(T)},
\end{equation*}
where $\SVT^n(\lambda)$ is the set of semistandard set-valued tableaux of shape $\lambda$ in the alphabet $\{1,2,\ldots,n\}$, 
$\mathsf{ex}(T)$ denotes the number of entries in $T$ 
minus the size of $\lambda,$ and $\wt(T)$ is the vector with the $i$-th entry being the number of $i$'s in $T$. 

Due to their rich geometric origins and complex combinatorial properties, these polynomials have become a central object of study in modern 
algebraic combinatorics. A powerful combinatorial tool associated to symmetric Grothendieck polynomials is the uncrowding algorithm introduced by
Buch~\cite{Buch_2002}. The uncrowding algorithm is a bijection between set-valued tableaux and pairs of semistandard Young tableaux and 
flagged increasing tableaux. Multiple integers can occupy a single cell in the set-valued tableau. The uncrowding algorithm resolves this by iteratively 
``moving'' the extra entries out of crowded cells and into new rows via the Schensted bumping algorithm, eventually yielding a semistandard tableau. 
This process is recorded via a flagged increasing tableau. Buch~\cite{Buch_2002} used the uncrowding algorithm to provide a bijective
proof of the Schur expansion of the symmetric Grothendieck polynomials previously given by Lenart~\cite{Len_00}.
The uncrowding algorithm and its generalizations have been studied further in~\cite{Bandlow_2012, Patrias_2016, Reiner_2018, Chan_2021,
pan_uncrowding_2022, JKPPS.2026}.

Solvable lattice models have emerged as a powerful, modern framework for investigating symmetric functions. Originally developed to study physical 
systems like ice, solvable lattice models have become a versatile tool in mathematics with applications in algebraic geometry, integrable probability, 
and algebraic combinatorics. Demazure atoms and their associated crystals have been interpreted using an open colored 5-vertex model in~\cite{BBBG.2021}.
A closed colored 5-vertex model for Demazure characters and their crystals was studied in~\cite{Yang.2025}.
Symmetric Grothendieck polynomials admit an interpretation as partition functions of certain 5-vertex lattice models 
as described by Motegi and Sakai~\cite{Motegi_2013, MS.2014}. Gorbounov and Korff~\cite{GK.2017} gave a $T$-equivariant $K$-theory version, 
and Buciumas, Scrimshaw, and Weber~\cite{Buciumas_2020} gave a colored version of this model.
In the Motegi--Sakai model, admissible states of the lattice model are in weight-preserving bijection with Gelfand--Tsetlin patterns. Expanding the lattice model 
weights refines these states in a way that recovers the set-valued tableaux indexing symmetric Grothendieck polynomials. This realization 
places symmetric Grothendieck polynomials within the broader context of integrable systems and statistical mechanics, providing 
new structural insight and diagrammatic tools.

While the use of solvable lattice models to study symmetric functions is a relatively recent development, we aim to further connect these 
two combinatorial frameworks. In this paper we give a lattice model interpretation of the Robinson--Schensted--Knuth (RSK) insertion algorithm,
which we then generalize to a lattice model analogue of uncrowding. Although the Schur expansion of symmetric Grothendieck polynomials is 
not immediately apparent from their lattice model description, our algorithm provides a method to recover the Schur expansion directly within the lattice model. 
Furthermore, our lattice model interpretation of RSK yields another new result: the direct construction of a crystal
structure on the states of the 5-vertex lattice model. A different interpretation of RSK in a lattice model appeared in~\cite{MP.2022}.

The paper is organized as follows. In Section~\ref{section.background}, we review the definition of the symmetric Grothendieck
polynomials, their combinatorics, their lattice model interpretation, and background on crystals. In Section~\ref{section.RSK}, we provide the new description
of RSK directly on the 5-vertex model and use it in Section~\ref{sec:uncrowd-lattice-algo} to define and analyze the uncrowding
algorithm in the lattice model setting. In Section~\ref{section.crystal}, we construct crystal operators on the states
of the 5-vertex model and show that they intertwine with the uncrowding algorithm. We conclude in Section~\ref{sec:future_work} with
a discussion of possible generalizations of this work.

\subsection*{Acknowledgements}

We thank Dan Bump and Travis Scrimshaw for useful discussions.

We thank ICERM for the stimulating research atmosphere during the semester program ``Categorification and Computation in Algebraic Combinatorics''
in Fall 2025 and related workshops. This material is based on work supported by the National Science Foundation under Grant No. DMS-1929284, 
while the authors were in residence at ICERM. L.\ Johnston has been supported as a GAANN Fellow through the Mathematics at UC Davis GAANN 
grant, funded by the Department of Education under grant P200A240025.
A.\ Schilling was partially supported by Simons Foundation grant MPS-TSM-00007191.

\section{Background}
\label{section.background}

In this section, we give background on symmetric Grothendieck polynomials, their combinatorics, and their lattice model
interpretation.

\subsection{Tableau combinatorics}
We begin by reviewing tableau combinatorics associated to symmetric Grothendieck polynomials.
We use English notation for partitions and tableaux. For a partition $\lambda=(\lambda_1,\lambda_2,\ldots)$, let $|\lambda|=\sum_i \lambda_i$ denote its size and let $\ell(\lambda)$ denote its length, the number of nonzero parts. We freely append trailing zeroes to partitions when needed. For a partition $\lambda$, we denote by \defn{$\SSYT(\lambda)$} (resp. \defn{$\SSYT^n(\lambda)$}) 
the set of semistandard Young tableaux of shape $\lambda$ (resp. with entries at most $n$). 
Set-valued tableaux were first introduced in~\cite{Buch_2002}. For a partition $\lambda$, a \defn{semistandard set-valued tableau} 
(or set-valued tableau for short) of shape $\lambda$ is a filling $T$ of the Young diagram for $\lambda$ with non-empty finite sets of 
positive integers, such that
\begin{enumerate}[label=(\roman*)]
     \item max($A$) $\leqslant$ min($B$) whenever the cell $A$ is in the same row and to the left of $B$;
     \item max($A$) $<$ min($C$) whenever the cell $A$ is in the same column and above $C$.
\end{enumerate}
Equivalently, $T$ is a set-valued tableau if any choice of a single element from each cell always gives a semistandard tableau. 
A cell in a set-valued tableau with more than one number is a \defn{multicell}. We call the non-minimal numbers in a multicell the 
\defn{crowded} entries. The set of all set-valued tableaux of shape $\lambda$ (resp. with entries at most $n$) is denoted by \defn{$\SVT(\lambda)$} 
(resp. \defn{$\SVT^n(\lambda)$}).
The \defn{weight} of a tableau $T\in \SVT^n(\lambda)$ is the tuple $\wt(T)=(\alpha_1,\ldots,\alpha_n)$, where $\alpha_i$ is the number of
$i$'s in $T$. The \defn{excess} of $T\in \SVT^n(\lambda)$ with weight $\mu$ is defined as $\mathsf{ex}(T) = |\mu|-|\lambda|$. 

A \defn{flagged increasing tableau} (also known as a strict elegant filling) is a semistandard Young tableau of skew shape that is strictly increasing 
across rows with the property that entries in the $i$-th row (counted from the top row of $\lambda$ and $\mu$) are restricted to $1,2,\dots, i-1$ 
for all $1 \leqslant i \leqslant \ell(\mu)$. Let $\mathcal{F}(\mu/\lambda)$ denote the set of flagged increasing tableaux of shape $\mu/\lambda$.

Let $T\in \SSYT^n(\lambda)$ and let $i$ be a positive integer. The \defn{row-insertion} of $i$ into a row $R$ of $T$, denoted $R \leftarrow i$, 
is defined as follows. Compare $i$ with the entries of $R$ from left to right. If no entry of $R$ is strictly larger than $i$, append $i$ to the end of the 
row. Otherwise, let $j$ be the leftmost entry in $R$ strictly larger than $i$. Replace $j$ by $i$, thereby ``bumping'' $j$, and then insert $j$ into the next 
row by repeating the procedure. 

For a word $w= w_1 w_2 \ldots w_\ell$, let $P(w)$ denote the tableau obtained by successively inserting the letters $w_1, w_2, \dots , w_\ell$ 
into the first row, where we start with the empty tableau. 

Next recall the row-insertion formulation of the Robinson--Schensted--Knuth correspondence. A generalized permutation 
\[ w = \left(
\begin{array}{cccc}
    a_1 & a_2 & \cdots & a_\ell \\
    b_1 & b_2 & \cdots & b_\ell 
\end{array}\right),
\]
is a two-row array such that $a_i \leqslant a_{i+1}$ and $b_i\leqslant b_{i+1}$ whenever $a_i=a_{i+1}$. Given a generalized permutation $w$,
the \defn{Robinson--Schensted--Knuth (RSK) correspondence} assigns a pair of semistandard tableaux
\[
\RSK(w)= (P,Q)
\]
where $P=P(b_1b_2\ldots b_\ell)$ and $Q$ records the entry $a_i$ in the new box created after inserting $b_i$. 
Note that $P$ and $Q$ by construction have the same shape.

We now state definitions and bijections from~\cite[Section 3]{MS.2014}, ~\cite[Section 4.2]{Monical_2020}, which we will need later.

\begin{definition}
\label{definition.GT}
    A \defn{Gelfand--Tsetlin (GT) pattern} is a triangular array 
    \[
    \begin{matrix}
   \lambda^{(n)}_{1}&   \lambda^{(n)}_{2}&   \cdots&  \cdots&   \lambda^{(n)}_{n-1}&&\hspace{-1em} \lambda^{(n)}_{n} \\[9pt]
   & \hspace{-1em}\lambda^{(n-1)}_{1}& \hspace{-0.5em}\lambda^{(n-1)}_{2}& \cdots&& \hspace{-1.5em}\lambda^{(n-1)}_{n-1}\\[9pt]
   & \ddots &&\hspace{-4em}\ddots &\hspace{-1.5em}\revddots \\[9pt]
   &&&\hspace{-2em}\lambda^{(1)}_{1}
  \end{matrix}
  \]
  where each row is weakly decreasing and hence can be viewed as a partition (with possibly trailing zeroes). 
  Setting $\lambda^{(0)}=\emptyset$ and denoting the $j$-th row by the partition $\lambda^{(j)}=(\lambda_1^{(j)},\ldots,\lambda_j^{(j)})$, 
  it is further required that the skew shape $\lambda^{(j)}/\lambda^{(j-1)}$ is a horizontal strip. We succinctly denote a GT pattern
  by its sequence of partitions $\Lambda= (\lambda^{(j)})_{j=0}^n$.
  
  A \defn{marked Gelfand--Tsetlin (MGT) pattern} is a GT pattern together with a distinguished set of marked positions $M$. 
    An entry $(i,j)$, with $1 \leqslant i <j$ and $2 \leqslant j \leqslant n$, may (but not necessarily) be marked if 
    $\lambda^{(j)}_{i+1}<\lambda^{(j-1)}_{i}$. Let \defn{$\MGT^n(\lambda)$} denote the set of all MGT patterns with top row $\lambda^{(n)}=\lambda$
    and a sequence of $n+1$ partitions.
\end{definition}

\begin{example}
Below is a marked GT pattern with top row $\lambda = (3,2,0,0,0)$ whose marked entries are boxed:
\begin{equation}
\label{equation:GTpattern}
\begin{array}{ccccccccc}
3 && \boxed{2} && 0 && 0 && 0 \\[4pt]
& \boxed{3} && 2 && 0 && 0 \\[4pt]
&& 3 && \boxed{1} && 0 \\[4pt]
&&& \boxed{2} && 1 \\[4pt]
&&&& 2
\end{array}
\end{equation}
\end{example}

Consider the recursive construction of the bijection between marked Gelfand--Tsetlin patterns with top row $\lambda$ and set-valued tableaux 
of shape $\lambda$:
\begin{equation*} 
\phi \colon \MGT^n(\lambda) \longrightarrow \SVT^n(\lambda).
\end{equation*}
Let $(\Lambda, M) \in \MGT^n(\lambda)$. 
\begin{enumerate}[label=(\roman*)]
    \item Begin with an empty tableau $T_0 = \emptyset$. 
    \item Assume that at stage $1\leqslant j \leqslant n$ we have already formed a set-valued tableau $T_{j-1}$ whose entries lie in $\{1, \ldots, j-1\}$.
    \item For each marked position $(i,j)\in M$, insert the value $j$ into the rightmost cell of the $i$-th row of $T_{j-1}$. Let the tableau obtained after 
    performing all such insertions be $\tilde{T}_j$. 
    \item Next, take the horizontal strip $\lambda^{(j)}/\lambda^{(j-1)}$ and fill each of its cells with $j$. Attach this strip to $\tilde{T}_j$ to obtain a tableau $T_j$ 
    of shape $\lambda^{(j)}$. 
    \item Continue this process for every row of $\Lambda$. The final tableau is $\phi(\Lambda, M)$. 
\end{enumerate}
The weight of a MGT pattern is then $\wt(\Lambda, M):=\wt(\phi(\Lambda, M))$. 

\begin{example}
Recall the MGT pattern $(\Lambda, M)$ from~\eqref{equation:GTpattern}. Below we show the image under $\phi$. Here, a colored marking $(i,j)\in M$
represents the colored number $j$ in $\phi(\Lambda, M)$:
\[
(\Lambda, M) = \begin{array}{ccccccccc}
3 && \textcolor{violet}{\boxed{2}} && 0 && 0 && 0 \\[4pt]
& \textcolor{ForestGreen}{\boxed{3}} && 2 && 0 && 0 \\[4pt]
&& 3 && \textcolor{blue}{\boxed{1}} && 0 \\[4pt]
&&& \textcolor{red}{\boxed{2}} && 1 \\[4pt]
&&&& 2
\end{array}, \qquad \qquad 
\ytableausetup{boxsize=2.2em}
\phi(\Lambda, M) = 
\ytableaushort{1{1,\!\textcolor{red}{2}}{3,\!\textcolor{ForestGreen}{4}},{2,\! \textcolor{blue}{3}}{4,\!\textcolor{violet}{5}}}\,
\]
\end{example}

Next we recall the uncrowding operator for set-valued tableaux introduced by Buch~\cite{Buch_2002}. This operator maps a given set-valued tableau 
to a pair consisting of a semistandard Young tableau together with a flagged increasing tableau.

\begin{definition}
\label{def:uncrowding-SVT}
    Let $T\in \SVT^n(\lambda)$. The \defn{uncrowding operation} on $T$ proceeds as follows.
    Find the bottommost row $r$ that contains a multicell, and within that row, find the largest entry $x$ appearing in a multicell. 
    Remove $x$ from its cell and row-insert it into the row below. The result is a tableau whose shape is obtained from $\lambda$ by adding a single 
    box.
    
    The \defn{uncrowding map}
    \begin{equation}
    \label{eq:uncrowding_SVT_to_pair}
        \U_\SVT \colon \SVT^n(\lambda) \longrightarrow \bigsqcup_{\mu \supseteq \lambda} \SSYT^n(\mu) \times \mathcal{F}(\mu/\lambda)
    \end{equation}
    is defined as follows:
    \begin{enumerate}[label=(\roman*)]
        \item Set $U_0=T$, and let $F_0$ be the unique flagged increasing tableau of shape $\lambda/\lambda$. 
        \item For each $1\leqslant i \leqslant \mathsf{ex}(T)$, obtain $U_i$ by applying the uncrowding operation to $U_{i-1}$. If the new cell $C$ 
        added in $U_i$ lies in row $r'$, then form $F_i$ from $F_{i-1}$ by placing the entry $r'-r$ in the corresponding cell. 
        \item Define \[\U_\SVT(T)=\bigl(P_\SVT(T), F_\SVT(T)\bigr):=\bigl( U_{\mathsf{ex}(T)}, F_{\mathsf{ex}(T)}\bigr) .\]
    \end{enumerate}
\end{definition}
Buch~\cite{Buch_2002} showed that \eqref{eq:uncrowding_SVT_to_pair} is a bijection. 

\subsection{Symmetric Grothendieck polynomials}
For a positive integer $n$, let $\mathbf{z}=(z_1,z_2,\dots, z_n)$ be a finite list of indeterminates. 

\begin{definition}[\cite{Buch_2002}] \label{def:groth-SVT}
Let $\lambda$ be a partition. Define the \defn{symmetric Grothendieck polynomial} as 
\begin{equation*}
    \G_\lambda(\mathbf{z};\beta):= \sum_{T \in \SVT^n(\lambda)} \beta^{\mathsf{ex}(T)} \mathbf{z}^{\wt(T)}.
     \end{equation*}
\end{definition}
We denote the symmetric Grothendieck polynomials as $\G_\lambda$ when the context is clear. 

The Schur expansion for the symmetric Grothendieck polynomials was first given by Lenart~\cite{Len_00}.

\begin{theorem}[\cite{Len_00}]
\label{thm:G_expansion_lenart}
Let $\hat{\lambda}$ be the unique maximal partition with $n$ rows (in the Young lattice) obtained from $\lambda$ by adding at most $i-1$ boxes to 
its $i$-th row for $2 \leqslant i \leqslant n$. Then
\[
	\G_\lambda(\mathbf{z};\beta)= \sum_{\lambda \subseteq \mu \subseteq \hat{\lambda}} 
	\beta^{|\mu|-|\lambda|} M^\mu_{\lambda} s_\mu(\mathbf{z}),
\]    
where $ M^\mu_{\lambda}=|\mathcal{F}(\mu/\lambda)|$.
\end{theorem}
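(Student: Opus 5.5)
The plan is to deduce Lenart's expansion from Buch's uncrowding bijection $\U_\SVT$ of \eqref{eq:uncrowding_SVT_to_pair}, which we take as known. We start from \cref{def:groth-SVT} and reorganize the sum over $T\in\SVT^n(\lambda)$ according to the image $\U_\SVT(T)=(P_\SVT(T),F_\SVT(T))\in\SSYT^n(\mu)\times\mathcal{F}(\mu/\lambda)$.

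The key step is to check that uncrowding preserves the statistics. Each uncrowding operation removes one crowded entry $x$ from a multicell and row-inserts it into the row below. Row insertion permutes entries and adds exactly one new box, so the multiset of entries is unchanged and $\wt(P_\SVT(T))=\wt(T)$. Each step also adds exactly one cell, so after $\mathsf{ex}(T)$ steps every cell is a singleton. This gives $|\mu|=|\lambda|+\mathsf{ex}(T)$, hence $\beta^{\mathsf{ex}(T)}=\beta^{|\mu|-|\lambda|}$. Since $\U_\SVT$ is a bijection, we obtain
\[
\G_\lambda(\mathbf z;\beta)=\sum_{\mu\supseteq\lambda}\beta^{|\mu|-|\lambda|}\,|\mathcal F(\mu/\lambda)|\sum_{P\in\SSYT^n(\mu)}\mathbf z^{\wt(P)}=\sum_{\mu}\beta^{|\mu|-|\lambda|}M^\mu_\lambda s_\mu(\mathbf z).
\]
Here we use $s_\mu(\mathbf z)=\sum_{P\in\SSYT^n(\mu)}\mathbf z^{\wt(P)}$. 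This is also the point where $\ell(\mu)\le n$ enters: $\SSYT^n(\mu)$ is empty when $\mu$ has more than $n$ rows.

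It remains to identify the range of summation, that is, to show that $\mathcal F(\mu/\lambda)\neq\emptyset$ exactly when $\lambda\subseteq\mu\subseteq\hat\lambda$, with $\ell(\mu)\le n$ as above.
\begin{itemize}
\item Suppose $F\in\mathcal F(\mu/\lambda)$. Row $i$ of $F$ is strictly increasing with entries in $\{1,\dots,i-1\}$, so $\mu_i-\lambda_i\le i-1$ for every $i$. Thus $\mu$ is a partition containing $\lambda$ in which at most $i-1$ boxes are added to row $i$. By the definition of $\hat\lambda$ as the unique maximal such partition (which is the join of all of them in Young's lattice), $\mu\subseteq\hat\lambda$.
\item Conversely, suppose $\lambda\subseteq\mu\subseteq\hat\lambda$. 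We must exhibit a flagged increasing filling. A natural candidate is to fill row $i$ right-justified with $i-(\mu_i-\lambda_i),\dots,i-1$, that is, the largest allowed values. Rows are then increasing, and the column-strictness check reduces to comparing consecutive rows using $\mu_{i+1}-\lambda_{i+1}\le i$ and the partition inequalities.
\end{itemize}

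I expect this converse, checking column strictness of the explicit filling, to be the main obstacle. If the explicit filling proves awkward, I would instead simply restrict the sum to those $\mu$ with $M^\mu_\lambda\neq 0$. Terms with $M^\mu_\lambda=0$ contribute nothing, so the stated formula with summation over $\lambda\subseteq\mu\subseteq\hat\lambda$ follows from the first bullet alone.
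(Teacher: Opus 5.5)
Your proposal is correct and follows the route the paper indicates: the paper cites Lenart's theorem without reproving it and remarks that Buch's uncrowding bijection $\U_\SVT$ gives a bijective proof, which is exactly your weight- and excess-preserving reorganization of the sum over $\SVT^n(\lambda)$. You are also right that the first bullet alone fixes the summation range, since terms with $M^\mu_\lambda=0$ contribute nothing; your right-justified filling does in fact work, because its column strictness needs only $\mu_{i+1}\leqslant\mu_i$.
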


The uncrowding map given in \eqref{eq:uncrowding_SVT_to_pair} was used in~\cite{Buch_2002} to give a bijective proof of \Cref{thm:G_expansion_lenart}.

\begin{definition}
\label{def:SSYT-reading} 
Let $T \in \SSYT^n(\lambda).$ The \defn{reading word} of $T$ is the word obtained by concatenating the entries of $T$ starting from the bottom 
row to the top and reading the entries in each row left to right.
\end{definition}

\begin{definition}[\cite{Monical_2020}]
\label{def:SVT-reading} 
Let $T \in \SVT^n(\lambda).$ The \defn{reading word} of $T$ is the word created by concatenating the entries of $T$ starting from the bottom row to 
the top and reading the entries in each row left to right. Boxes with multiple entries are read in decreasing order.
\end{definition}

\begin{definition}
\label{def:Yamanouchi} 
A word $w = w_1w_2 \ldots w_d$ on the alphabet $[n]$ is \defn{Yamanouchi} if every final subword $w_k w_{k+1} \ldots w_d$ for 
$1 \leqslant k\leqslant d$ contains at least as many $i$'s as $i+1$'s for all $i \in [n-1].$ 
\end{definition}

\begin{example}\label{ex:tab-reading-word}
The reading words of two set-valued tableaux are given below, where the word on the left is Yamanouchi while the word on the right is not:
\ytableausetup{boxsize=2.5em}
\[
    \begin{array}{lc@{\qquad\qquad\qquad}c}
        & \qquad \ytableaushort{1 {1,2},3} & \qquad \ytableaushort{1 {1,2,3},3} \\[3em]
        \text{reading word:} & 3121 & 31321
    \end{array}
\]
\end{example}

We say a set-valued tableau is Yamanouchi if it has a Yamanouchi reading word. The following theorem gives an equivalent formulation for the Schur expansion using Yamanouchi set-valued tableaux.
\begin{theorem}[\cite{Monical_2020}] 
\label{thm:yamanouchi-expand}
We have 
\[
	\G_\lambda(\mathbf{z};\beta)= \sum_{\lambda \subseteq \mu} \beta^{|\mu|-|\lambda|} M^\mu_{\lambda} s_\mu(\mathbf{z}),
\] 
where $M^\mu_{\lambda}$ counts the number of Yamanouchi set-valued tableaux of shape $\lambda$ and weight $\mu$.
\end{theorem}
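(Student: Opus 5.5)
The plan is to prove \Cref{thm:yamanouchi-expand} by comparing it with Lenart's expansion in \Cref{thm:G_expansion_lenart}. Since the Schur functions $s_\mu(z_1,\ldots,z_n)$ with $\ell(\mu)\leqslant n$ are linearly independent, it suffices to show, for each $\mu$, that
\[
|\mathcal{F}(\mu/\lambda)| = \#\{T\in\SVT^n(\lambda) : \wt(T)=\mu,\ T \text{ Yamanouchi}\}.
\]
I would build a bijection between these two sets using Buch's uncrowding map $\U_\SVT$ from \eqref{eq:uncrowding_SVT_to_pair}, which is a bijection onto $\bigsqcup_{\mu\supseteq\lambda}\SSYT^n(\mu)\times\mathcal{F}(\mu/\lambda)$. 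Restricting it to Yamanouchi set-valued tableaux, the claim becomes: $T$ is Yamanouchi of weight $\mu$ if and only if $P_\SVT(T)$ is the highest weight tableau $Y_\mu$ of shape $\mu$, whose $i$-th row consists entirely of $i$'s. Granting this, $F_\SVT$ gives a bijection from Yamanouchi $T$ of weight $\mu$ onto $\mathcal{F}(\mu/\lambda)$, since $Y_\mu$ is the unique tableau of shape and weight $\mu$ with Yamanouchi reading word.

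The key step is to show that uncrowding respects Knuth equivalence of reading words. Read $T$ with \Cref{def:SVT-reading}, and let $w$ be that reading word. I would prove that a single uncrowding operation (remove the largest crowded entry $x$ from the bottommost row containing a multicell, then row-insert $x$ into the row below) produces a tableau whose reading word is Knuth equivalent to $w$. The reason this should hold is structural. In the reading word, $x$ appears just before the minimum of its cell. The rows below row $r$ have no multicells, so the part of $w$ coming from rows $r,r+1,\ldots$ reads as the reading word of an honest semistandard tableau with $x$ spliced into row $r$. One then checks that moving $x$ to be row-inserted into row $r+1$ is a sequence of Knuth moves, namely the usual argument that row insertion is realized by Knuth relations, adapted to the local configuration. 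Iterating gives that $P_\SVT(T)$ has reading word Knuth equivalent to $w$, so $P_\SVT(T)=P(w)$.

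Given this, the Yamanouchi condition finishes the argument. A word is Yamanouchi if and only if its insertion tableau is $Y_\mu$, a classical fact: the lattice/Yamanouchi property is preserved under Knuth moves and characterizes the highest weight elements of the crystal of words. So $T$ is Yamanouchi exactly when $P(w)=P_\SVT(T)=Y_\mu$, and the weight is preserved because uncrowding preserves content. The main obstacle is the Knuth-equivalence claim for one uncrowding step. The delicate parts are (a) the order in which multicell entries are read (decreasing within a cell), which must interact correctly with the row-$r$ entries to the right of the crowded cell, and (b) showing that removing $x$ from row $r$ leaves a weakly increasing row word whose concatenation with the following rows remains Knuth equivalent. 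I would first try to realize the step as an explicit sequence of elementary Knuth relations, arguing by induction on the number of entries in row $r$ to the right of $x$'s cell. If that gets messy, the fallback is to argue via crystal operators: show that $\U_\SVT$ commutes with the crystal operators $e_i$ of \cite{Monical_2020} on set-valued tableaux, so highest weight elements correspond to highest weight elements.
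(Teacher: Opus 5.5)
Your reduction is sound. By \Cref{thm:G_expansion_lenart} and linear independence of the $s_\mu(z_1,\dots,z_n)$ with $\ell(\mu)\leqslant n$, it suffices to show that $T$ is Yamanouchi of weight $\mu$ if and only if $P_\SVT(T)=Y_\mu$, and that equivalence is in fact true. The gap is the lemma you use to prove it. Uncrowding does \emph{not} preserve the Knuth class of the reading word of Definition~\ref{def:SVT-reading}, not even for a single step, so $P_\SVT(T)=P(w)$ fails in general.

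Take $n=3$, $\lambda=(2)$, and let $T$ be the one-row tableau with cells $\{1,2\},\{2,3\}$.
\begin{itemize}
\item Its reading word is $w=2132$, and $P(w)$ has rows $12$ and $23$, so shape $(2,2)$.
\item The first uncrowding step removes $x=3$ and gives row $\{1,2\},2$ above row $3$. Its reading word $3212$ has insertion tableau with rows $12$, $2$, $3$.
\item After the second step, $P_\SVT(T)$ has rows $12$, $2$, $3$, so shape $(2,1,1)$.
\end{itemize}
The break occurs exactly at your point (a). In $w$ the letter $x=3$ is read after the whole cell $\{1,2\}$, and moving it into the next row is not a composite of Knuth relations.

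This cannot be repaired by a more careful Knuth argument, because the reading word is not a crystal embedding. For instance, $\G_{(2)}(z_1,z_2,z_3;\beta)=s_{(2)}+\beta s_{(2,1)}+\beta^2 s_{(2,1,1)}$ has no $s_{(2,2)}$ term, yet $w$ lies in a component of the word crystal isomorphic to $B(2,2)$. Correspondingly, the operator of \cite{Monical_2020} gives $\widetilde{e}_1(T)$ with cells $1,\{1,2,3\}$ and reading word $1321$. The word-crystal operator, with the convention in which highest weight words are the Yamanouchi ones, gives $e_1(2132)=2131$, which has a different insertion shape.

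What does survive is that $T$ is highest weight exactly when its reading word is Yamanouchi. This must be proved directly from the signature rule (T1)--(T3): for each $i$, all $\widetilde{e}_i$-relevant brackets cancel if and only if every final subword has at least as many $i$'s as $(i+1)$'s. By the example above, it cannot be pulled back from the crystal of words.

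Your fallback is essentially the route of \cite{Monical_2020}, which the paper simply cites. It needs two ingredients. The first is that $\U_\SVT$ intertwines the crystal on $\SVT^n(\lambda)$ with the crystal on $\bigsqcup_\mu \SSYT^n(\mu)\times\mathcal{F}(\mu/\lambda)$ \cite[Theorem~3.12]{Monical_2020}; this is the real content, and you leave it unproven. The second is the highest weight/Yamanouchi characterization just described, which your sketch omits entirely.
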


\subsection{Lattice model background}
The 5-vertex lattice model we use throughout this paper was originally introduced by Motegi and Sakai~\cite{Motegi_2013, MS.2014}, and later by Buciumas, Scrimshaw, and Weber~\cite{Buciumas_2020} through a colored version. Our formulation here differs slightly from these two references, as
outlined in Remark~\ref{remark.comparison} below.

Fix positive integers $n$ and $m$. Our model lives on a rectangular lattice with $n$ horizontal lines and $m$ vertical lines, where each intersection 
is a vertex and each edge may carry an arrow pointing right, up, or be empty. 

We fix the boundary conditions as follows. Every left boundary edge carries a right arrow. All right and bottom edges are empty. The top boundary is 
specified by a binary string of length $m$, where a 1 encodes an up arrow and a 0 encodes an empty edge.
\begin{figure}[hbtp!]
\centering
\renewcommand{\arraystretch}{1.2}
\[\begin{array}{|c|c|c|c|c|}
\hline
\tt a_1 & \tt a_2 & \tt b_1 & \tt b_2 & \tt c_1 \\
\hline
\begin{tikzpicture}
\coordinate (a) at (-.75, 0);
\coordinate (b) at (0, .75);
\coordinate (c) at (.75, 0);
\coordinate (d) at (0, -.75);
\coordinate (e) at (0,0);
\draw (a)--(c);
\draw (b)--(d);
\end{tikzpicture} &
\begin{tikzpicture} 
\draw (a)--(e);
\draw (e)--(b); 
\draw[{-{Stealth[length=3mm,width=2mm]},red,line width=1.2pt}] (e)--(c);
\draw[{-{Stealth[length=3mm,width=2mm]},red,line width=1.2pt}] (d)--(e);
\end{tikzpicture} &
\begin{tikzpicture}
\draw[{-{Stealth[length=3mm,width=2mm]},red,line width=1.2pt}] (e)--(c);
\draw[{-{Stealth[length=3mm,width=2mm]},red,line width=1.2pt}] (d)--(e);
\draw[{-{Stealth[length=3mm,width=2mm]},red,line width=1.2pt}] (a)--(e);
\draw[{-{Stealth[length=3mm,width=2mm]},red,line width=1.2pt}] (e)--(b); 
\end{tikzpicture} &
\begin{tikzpicture}
\draw (d)--(e);
\draw (e)--(b); 
\draw[{-{Stealth[length=3mm,width=2mm]},red,line width=1.2pt}] (e)--(c);
\draw[{-{Stealth[length=3mm,width=2mm]},red,line width=1.2pt}] (a)--(e);
\end{tikzpicture} &
\begin{tikzpicture}
\draw (d)--(e);
\draw (c)--(e);
\draw[{-{Stealth[length=3mm,width=2mm]},red,line width=1.2pt}] (a)--(e);
\draw[{-{Stealth[length=3mm,width=2mm]},red,line width=1.2pt}] (e)--(b);
\end{tikzpicture}\\
\hline

1 & 1+\beta z_i & 1 & z_i & 1\\
\hline
\end{array}\]
\caption{Boltzmann weights $\wt(v)$ for a vertex $v$ in the $i$-th row of the uncolored model.}
\label{fig:uncolored_wts}
\commentAlt{Figure~\ref{fig:uncolored_wts}. A table with five columns and three rows. See long description.} 
\commentLongAlt{Figure~\ref{fig:uncolored_wts}. The first row is labeled with a1, a2, b1, b2, and c1. Each column in the middle row contains two perpendicular axes crossing at the center. There are red arrows along these axes in different arrangements for each column. The last row is labeled with the corresponding weight values $1, 1+\beta z_i, 1, z_i$, and 1.}
\end{figure}

With these boundary conditions, a \defn{state} of the model is an assignment of up, right, or empty arrows to all remaining edges. The 5-vertex types 
in \Cref{fig:uncolored_wts} are the only locally allowed configurations, and a state is \defn{admissible} if every vertex is one of these five types. We 
single out vertices of type $\tt a_2$ and call them \defn{bumps}. A vertex with $\tt a_2$ configuration is called a \defn{bump vertex}.
Each of the 5-vertex types is assigned a \defn{Boltzmann weight}. 

The top boundary condition is determined by a partition $\lambda$ as follows. Set $m=\lambda_1+n$, and inscribe the Young diagram of 
$\lambda$ in the upper-left corner of an $n \times (m-n)$ grid. Trace the border of $\lambda$ from its lower-left corner to its upper-right corner 
and write a 1 for each northward step and 0 for each eastward step. We call the resulting binary string of length $m$ the \defn{$\{0,1\}$-sequence} of 
$\lambda$. In this encoding, the $1$'s appear exactly in positions $\lambda_i+(n-i+1)$ for $1 \leqslant i \leqslant n$. 

\begin{example}
For $\lambda=(3,2,0,0,0)$ with $n=5$ and $m=8$, we obtain the $\{0,1\}$-sequence $11100101$. 
\end{example}

Let $\states_\lambda^n$ denote the set of all admissible states with boundary condition $\lambda$ for a given $n$. For an admissible state 
$S \in \states_\lambda^n$, its \defn{weight} $\wt(S)$ is computed by multiplying together the Boltzmann weight at each vertex, where the variable 
at a vertex in lattice row $i$ (with rows counted from bottom to top) is taken to be $z_i$, so that $z_n$ appears in the top row and $z_1$ in the bottom row. 
States that are not admissible are assigned weight 0. The \defn{partition function} of a set of admissible states $\mathcal{M}$ is then 
\[
	Z(\mathcal{M}; \mathbf{z}; \beta) := \sum_{S \in \mathcal{M}} \wt(S),
\]
that is, the sum of the Boltzmann weights over all admissible states in $\mathcal{M}$.

We now associate a family of labeled directed paths to each admissible state. First, we assign the label $i$ to each arrow entering from the left boundary in row $i$. We extend these labeled paths through the lattice until they reach an arrow in the top row, called an \defn{exit arrow}. Excluding the vertices of type $\tt b_1$, each labeled path follows the unique directed path determined by the arrows. At a $\tt b_1$ vertex, two distinct paths meet at a vertex but do not cross. That is, the path entering from the left exits through the top, while the path entering from the bottom exits through the right. Thus, distinct paths may share vertices but never intersect. For each $i \in \{1, \dots, n \}$, let $L_i$ be the directed path carrying the label $i$. We call $\{L_1, \dots, L_n\}$ the \defn{natural paths}. We index the exit arrows $1, \dots, n$ from right to left. Therefore, each natural path $L_i$ enters through the $i$-th row and ends at the $i$-th exit arrow by following the southeastern-most directed path. For clarity, we differentiate the natural paths with various colors in subsequent pictures.

We will show in Lemma~\ref{lem:paths-to-tab} that under the bijection with set-valued 
tableaux, $L_i$ corresponds to row $i$ of the tableau. 

\begin{remark}
\label{remark.comparison}
    The lattice model described above is equivalent to the lattice models introduced in~\cite{Motegi_2013,MS.2014,Buciumas_2020}. Our formulation 
    replaces their $\{0,1\}$-labels on half-edges with an equivalent description using up, right, and empty arrows. 
    Furthermore, our row indexing differs from the convention in~\cite{Buciumas_2020}. Throughout this paper, rows, weights, and natural paths in the lattice are indexed from bottom to top, instead of top to bottom.
\end{remark}

There is a well-known bijection between admissible states and GT patterns with top row $\lambda$,  
\begin{equation*}
	\mathfrak{P} \colon \states_\lambda^n \longrightarrow \GT^n(\lambda).
\end{equation*}
Concretely, for each $0 \leqslant j \leqslant n$, read off the $\{0,1\}$-sequence on the 
$j$-th row of vertical edges (treating the bottom boundary as row 0) and 
let $\lambda^{(j)}$ be the partition it encodes. The sequence $(\lambda^{(j)})_{j=0}^n$ is then the GT pattern $\mathfrak{P}(S)$.

Since each bump vertex carries a factor of $1+\beta z_i$ in $\wt(S)$, expanding this product introduces a choice at every bump: include the 
$\beta z_i$ term or the constant 1. Each such choice corresponds to selecting a marking on the GT pattern $\mathfrak{P}(S)$, and so 
\begin{equation*}
    \wt(S)=\sum_{M} \beta^{|M|}\mathbf{z}^{\wt(\mathfrak{P}(S),M)},
\end{equation*}
where the sum ranges over all markings $M$ of $\mathfrak{P}(S)$.

\begin{theorem}[\cite{Motegi_2013}]
\label{thm:groth-equals-partition-function}
The symmetric Grothendieck polynomial is equal to the partition function of the model $\states_\lambda^n$:
\[
	\G_\lambda(\mathbf{z}; \beta) = Z(\states_\lambda^n; \mathbf{z}; \beta).
\]
\end{theorem}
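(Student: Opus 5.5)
The plan is to reduce the statement to Buch's set-valued tableau formula (\Cref{def:groth-SVT}) by chaining three weight-preserving bijections:
\[
\states_\lambda^n \xrightarrow{\ \mathfrak{P}\ } \GT^n(\lambda), \qquad \bigsqcup_{S}\{\text{markings of } \mathfrak{P}(S)\} = \MGT^n(\lambda) \xrightarrow{\ \phi\ } \SVT^n(\lambda).
\]
I would proceed in three steps: show $\mathfrak{P}$ is well defined and bijective; match the marking choices with the bump vertices and their weights; and check that $\phi$ is a weight-preserving bijection that records the excess.

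\textbf{Step 1 (admissible states and GT patterns).} Along the row of vertical edges between lattice rows $j$ and $j+1$, the up arrows form a $\{0,1\}$-sequence. Arrow conservation at each admissible vertex (one arrow in from left or bottom, one out right or top, except at $\tt a_1$ and $\tt b_1$) implies two things, with the bottom empty and one arrow entering from the left in each row:
\begin{itemize}
\item the sequence on level $j$ has exactly $j$ ones;
\item there is no path exiting right, since the right boundary is empty.
\end{itemize}
Reading a row of vertices, a horizontal arrow runs from each up arrow entering from below to the next up arrow exiting above. The allowed vertex types force the $1$'s of level $j-1$ and level $j$ to interlace, which is exactly the horizontal strip condition $\lambda^{(j)}/\lambda^{(j-1)}$. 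Conversely, interlacing sequences determine the horizontal arrows uniquely, so $\mathfrak{P}$ is a bijection onto $\GT^n(\lambda)$.

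\textbf{Step 2 (bumps and markings).} Next I would identify the vertex types by their weights. A $\tt b_2$ vertex (weight $z_j$) in row $j$ is a horizontal step of a path, i.e.\ a box of the strip $\lambda^{(j)}/\lambda^{(j-1)}$ filled with $j$. A $\tt a_2$ bump vertex (bottom in, right out) occurs exactly where a part of $\lambda^{(j-1)}$ moves right, i.e.\ where $\lambda^{(j)}_{i+1} < \lambda^{(j-1)}_i$ for the corresponding $i$. Its factor $1+\beta z_j$ is the choice of marking $(i,j)$ or not. This is the identity $\wt(S)=\sum_M \beta^{|M|}\mathbf{z}^{\wt(\mathfrak{P}(S),M)}$ stated before the theorem.

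\textbf{Step 3 (MGT patterns and set-valued tableaux).} Finally, I would show that $\phi$ is a bijection $\MGT^n(\lambda)\to\SVT^n(\lambda)$, where $\wt(\Lambda,M)$ is defined through $\phi$ and the excess equals $|M|$. Summing over states and markings then gives
\[
Z(\states_\lambda^n)=\sum_{T\in\SVT^n(\lambda)}\beta^{\mathsf{ex}(T)}\mathbf{z}^{\wt(T)}=\G_\lambda .
\]

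The main obstacle is proving that $\phi$ is a bijection; I would do this by induction on $j$. The condition $\lambda^{(j)}_{i+1} < \lambda^{(j-1)}_i$ says the last cell of row $i$ in $T_{j-1}$ has no cell of $\lambda^{(j)}$ directly below it. Adding $j$ to that cell therefore keeps the column strictness. Row weak-increase is also preserved, because any $j$'s added to row $i$ lie to its right. Conversely, every $j$ in a set-valued tableau is either a single minimal entry, forming the horizontal strip, or a crowded entry. A crowded $j$ must sit in the rightmost cell of row $i$ among cells with entries less than $j$, and the cell below must be absent at stage $j$. This recovers $(\Lambda,M)$ uniquely. Checking these inverse conditions carefully, in particular that the crowded $j$'s sit at the row ends of $T_{j-1}$, is the delicate part.
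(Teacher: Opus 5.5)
Your proposal is correct and follows the route the paper itself lays out. The paper does not reprove the theorem; it cites Motegi--Sakai, but its surrounding setup is exactly your chain: the bijection $\mathfrak{P}\colon \states_\lambda^n\to\GT^n(\lambda)$, the expansion $\wt(S)=\sum_M\beta^{|M|}\mathbf{z}^{\wt(\mathfrak{P}(S),M)}$ obtained by matching $\tt a_2$ vertices with markable positions, and the bijection $\phi\colon\MGT^n(\lambda)\to\SVT^n(\lambda)$ taken from the references. Your interlacing argument, your identification of bumps with the condition $\lambda^{(j)}_{i+1}<\lambda^{(j-1)}_i$, and your inverse construction for $\phi$ are all sound, and they supply details the paper leaves to those references.
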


Recall that each $\tt{a}_2$ (bump) vertex in a state $S\in \states_\lambda^n$ contributes a factor of $1+\beta z_i$ to $\wt(S)$. Choosing 
either 1 or $\beta z_i$ from each bump determines a term in the expansion of $\wt(S)$. Furthermore, by Definitions~\ref{definition.GT} 
and~\ref{def:groth-SVT} and \Cref{thm:groth-equals-partition-function}, each term corresponds to a set-valued tableau. We therefore say that a bump 
is \defn{trivial} if the contribution $1$ is chosen, and \defn{non-trivial} if the contribution $\beta z_i$ is chosen. This motivates the following definition. 
    
\begin{definition}
For a fixed $n$, let $\decoratedstate_\lambda$ denote the set of pairs $S^{\text{dec}}=(S,N)$, where $S \in \states_\lambda^n$ and $N$ is a subset of the 
bump vertices in $S$ specifying the non-trivial bump vertices. We refer to these pairs as \defn{decorated states}. Furthermore, let 
$\decoratedstate^0_\lambda \subseteq \decoratedstate_\lambda$ denote the decorated states where all bumps, if any, are trivial; this 
corresponds to $N=\emptyset$. We refer to this set as the \defn{trivial decorated states}. 
\end{definition}

For a decorated state $S^{\text{dec}}\in \decoratedstate_\lambda$, we call a vertex \defn{contributing} if its local weight is not equal to 1. Thus, the contributing 
vertices in $S^\text{dec}$ are precisely $\tt b_2$ vertices and the non-trivial bumps in $N$.

Under the bijection ${\mathfrak P} \colon \states_\lambda^n \longrightarrow \GT^n(\lambda)$, the non-trivial bumps correspond exactly to markings 
of the associated GT pattern. Hence, $\mathfrak P$ naturally extends to a bijection 
\begin{equation}
\widetilde{\mathfrak P} \colon \decoratedstate_\lambda \longrightarrow \MGT^n(\lambda).
\end{equation}
By composing with $\phi \colon \MGT^n(\lambda)\longrightarrow \SVT^n(\lambda)$, we obtain the bijection 
\begin{equation}
\label{eq:bij-decoratedstate-SVT}
    \psi \colon \decoratedstate_\lambda \longrightarrow \SVT^n(\lambda).
\end{equation}
For a decorated state, we define its weight by assigning weight $1$ to trivial bumps, weight $\beta z_i$ to non-trivial bumps in row $i$, and weight $z_i$ to vertices of type $\tt b_2$ in row $i$, with all other vertices having weight $1$.
Hence, 
\[
	\G_\lambda(\mathbf{z}; \beta) = Z(\decoratedstate_\lambda; \mathbf{z}; \beta).
\]
In the lattice model, we represent trivial bumps by crossed vertices, non-trivial bumps by circled vertices, and $\tt b_2$ vertices by dotted marks. 
In doing so, every contributing vertex can be visually distinguished.

\begin{example}
Let $\lambda=(2,1)$ and $n=2$. Then 
\[
	\G_\lambda(z_1, z_2; \beta)=s_{(2,1)}+\beta s_{(2,2)}= z_1z_2^2+z_1^2z_2(1+\beta z_2).
\]
The set $\SVT^2(\lambda)$ consists of the following three tableaux: 
\[
	\ytableausetup{boxsize=2.2em}
	\ytableaushort{1 2,2} \qquad \ytableaushort{1 1,2} \qquad  \ytableaushort{1 {1,2},2}\;.
\]
There are two admissible states in $\states_\lambda^2$. After decorating the bump in the second state, we obtain three decorated states shown below 
with their weights. The state $S_1$ has no bumps, so it determines a unique decorated state $S_1^{\text{triv}}=(S_1,\emptyset)$. The second state 
$S_2$ has one bump vertex $v$, so it gives rise to two decorated states $S_2^{\text{triv}}=(S_2,\emptyset)$, $S_2^{\text{nontriv}}=(S_2,\{v\})$ in 
$\decoratedstate_\lambda$ depending on whether the bump is trivial or non-trivial. Hence, $\decoratedstate^0_\lambda=\{S_1^{\text{triv}}, S_2^{\text{triv}}\}$. 
The natural paths $L_1$ and $L_2$ are colored red and blue, respectively. Since $n=2$, $L_1$ corresponds to row $1$ of the tableau and $L_2$ 
corresponds to row $2$. The state $S_1^{\text{triv}}$ corresponds to the left tableau, $S_2^{\text{triv}}$ corresponds to the middle tableau, 
and $S_2^{\text{nontriv}}$ corresponds to the right tableau which contains one crowded entry.
\[
\begin{array}{ccc}

\scalebox{0.9}{
 \begin{tikzpicture}

    \draw (1,0)--(1,3);
    \draw (2,0)--(2,3);
    \draw (3,0)--(3,3);
    \draw (4,0)--(4,3);
    
    \draw (0,1)--(5,1);
    \draw (0,2)--(5,2);
    
    \draw[-{Stealth[length=3mm,width=2mm]},red ,line width=1.2pt] (0,1)--(1,1);
    \draw[-{Stealth[length=3mm,width=2mm]},red,line width=1.2pt] (1,1)--(2,1);
    \draw[-{Stealth[length=3mm,width=2mm]},red,line width=1.2pt] (2,1)--(2,2);    
    \draw[-{Stealth[length=3mm,width=2mm]},red,line width=1.2pt] (2,2)--(3,2);  
    \draw[-{Stealth[length=3mm,width=2mm]},red,line width=1.2pt] (3,2)--(4,2); 
    \draw[-{Stealth[length=3mm,width=2mm]},red,line width=1.2pt] (4,2)--(4,3);  

    \draw[-{Stealth[length=3mm,width=2mm]},blue,line width=1.2pt] (0,2)--(1,2);
    \draw[-{Stealth[length=3mm,width=2mm]},blue,line width=1.2pt] (1,2)--(2,2);
    \draw[-{Stealth[length=3mm,width=2mm]},blue,line width=1.2pt] (2,2)--(2,3);
    
    \node at (-.25,1) {$ z_1$};
    \node at (-.25,2) {$ z_2$};

    \node[circle, fill=violet, draw = violet, opacity = 0.4, ultra thick, minimum size=7pt, inner sep=2pt] (c) at (1,1){};
    \node[circle, fill=violet, draw = violet, opacity = 0.4, ultra thick, minimum size=7pt, inner sep=2pt] (c) at (1,2){};
    \node[circle, fill=violet, draw = violet, opacity = 0.4, ultra thick, minimum size=7pt, inner sep=2pt] (c) at (3,2){};
  \end{tikzpicture}
  }
  &
  \scalebox{0.9}{
  \begin{tikzpicture}

     \draw (1,0)--(1,3);
    \draw (2,0)--(2,3);
    \draw (3,0)--(3,3);
    \draw (4,0)--(4,3);
    
    \draw (0,1)--(5,1);
    \draw (0,2)--(5,2);
    
    \draw[-{Stealth[length=3mm,width=2mm]},red ,line width=1.2pt] (0,1)--(1,1);
    \draw[-{Stealth[length=3mm,width=2mm]},red,line width=1.2pt] (1,1)--(2,1);
    \draw[-{Stealth[length=3mm,width=2mm]},red,line width=1.2pt] (2,1)--(3,1);    
    \draw[-{Stealth[length=3mm,width=2mm]},red,line width=1.2pt] (3,1)--(3,2);  
    \draw[-{Stealth[length=3mm,width=2mm]},red,line width=1.2pt] (3,2)--(4,2); 
    \draw[-{Stealth[length=3mm,width=2mm]},red,line width=1.2pt] (4,2)--(4,3);  

    \draw[-{Stealth[length=3mm,width=2mm]},blue,line width=1.2pt] (0,2)--(1,2);
    \draw[-{Stealth[length=3mm,width=2mm]},blue,line width=1.2pt] (1,2)--(2,2);
    \draw[-{Stealth[length=3mm,width=2mm]},blue,line width=1.2pt] (2,2)--(2,3);
    
    \node at (-.25,1) {$ z_1$};
    \node at (-.25,2) {$ z_2$};
    
    \node[circle, fill=violet, draw = violet, opacity = 0.4, ultra thick, minimum size=7pt, inner sep=2pt] (c) at (1,1){};
    \node[circle, fill=violet, draw = violet, opacity = 0.4, ultra thick, minimum size=7pt, inner sep=2pt] (c) at (1,2){};
    \node[circle, fill=violet, draw = violet, opacity = 0.4,ultra thick, minimum size=7pt, inner sep=2pt] (c) at (2,1){};    
    \node[cross out, fill=violet, draw = violet, ultra thick, minimum size=10pt, inner sep=2pt] (c) at (3,2){};   
  \end{tikzpicture}
  }
  &
  \scalebox{0.9}{
  \begin{tikzpicture}

     \draw (1,0)--(1,3);
    \draw (2,0)--(2,3);
    \draw (3,0)--(3,3);
    \draw (4,0)--(4,3);
    
    \draw (0,1)--(5,1);
    \draw (0,2)--(5,2);
    
    \draw[-{Stealth[length=3mm,width=2mm]},red ,line width=1.2pt] (0,1)--(1,1);
    \draw[-{Stealth[length=3mm,width=2mm]},red,line width=1.2pt] (1,1)--(2,1);
    \draw[-{Stealth[length=3mm,width=2mm]},red,line width=1.2pt] (2,1)--(3,1);    
    \draw[-{Stealth[length=3mm,width=2mm]},red,line width=1.2pt] (3,1)--(3,2);  
    \draw[-{Stealth[length=3mm,width=2mm]},red,line width=1.2pt] (3,2)--(4,2); 
    \draw[-{Stealth[length=3mm,width=2mm]},red,line width=1.2pt] (4,2)--(4,3);  

    \draw[-{Stealth[length=3mm,width=2mm]},blue,line width=1.2pt] (0,2)--(1,2);
    \draw[-{Stealth[length=3mm,width=2mm]},blue,line width=1.2pt] (1,2)--(2,2);
    \draw[-{Stealth[length=3mm,width=2mm]},blue,line width=1.2pt] (2,2)--(2,3);
    
    \node at (-.25,1) {$ z_1$};
    \node at (-.25,2) {$ z_2$};
    
    \node[circle, fill=violet, draw = violet, opacity = 0.4,ultra thick, minimum size=7pt, inner sep=2pt] (c) at (1,1){};
    \node[circle, fill=violet, draw = violet, opacity = 0.4,ultra thick, minimum size=7pt, inner sep=2pt] (c) at (1,2){};
    \node[circle, fill=violet, draw = violet, opacity = 0.4, ultra thick, minimum size=7pt, inner sep=2pt] (c) at (2,1){};    
    \node[circle, draw = violet, ultra thick, minimum size=10pt, inner sep=2pt] (c) at (3,2){};   
    \end{tikzpicture}
    }
    \\
    S_1^{\text{triv}} & S_2^{\text{triv}} & S_2^{\text{nontriv}} \\
    \wt=z_1z_2^2 & \wt=z_1^2z_2 & \wt=\beta z_1^2z_2^2
\end{array} 
\]
\end{example}

\subsection{Crystal background}
We briefly recall the type $A$ crystal conventions used throughout this paper.
For additional details about crystals, we refer the reader to~\cite{BumpSchilling.2017}.

Let $\mathfrak{sl}_n$ be the Lie algebra of type $A_{n-1}$, and let $I=\{1,2, \ldots, n-1\}$. We denote the simple roots by $\{\alpha_i \mid i \in I\},$ the simple coroots by $\{h_i \mid i \in I\},$ and the fundamental weights by $\{\Lambda_i \mid i\in I\}.$ Let $\Lambda$ denote the integral span of the fundamental weights.

\begin{definition}
    A \defn{Kashiwara $U_q(\mathfrak{sl}_n)$-crystal} consists of a nonempty set $B$ together with maps 
    \begin{align*}
        e_i, f_i & \colon B \to B\sqcup\{0\},
        \\
        \wt & \colon B \to \Lambda, 
    \end{align*}
    for each $i \in I$. For $x \in B$, define the string lengths $\varepsilon_i, \varphi_i \colon B \to \ZZ_{\geqslant 0}$ by
         \begin{align*}
        \varepsilon_i(x) & = \max \{k \mid e_i^k(x) \neq 0\},
        \\ \varphi_i(x) & = \max \{k \mid f_i^k(x) \neq 0\}.
        \end{align*}
    We require that the following conditions hold for all $i \in I$: 
        \begin{enumerate}
        \item $f_i(x) = y$ if and only if $e_i(y)=x$ for all $x,y \in B$;
        \item $\wt(f_i(x)) = \wt(x) - \alpha_i$ for all $x \in B$ such that $f_i(x) \neq 0$;
        \item $\varphi_i(x) = \varepsilon_i(x) + \inner{h_i}{\wt(x)}$ for all $x\in B$.
        \end{enumerate}
\end{definition}

Crystals associated to a $U_q(\mathfrak{sl}_n)$-representation need to satisfy further conditions, see for example~\cite{BumpSchilling.2017}.

\begin{definition}
    Let $B_1$ and $B_2$ be two Kashiwara $U_q(\mathfrak{sl}_n)$-crystals.  A \defn{crystal morphism} $\Psi \colon B_1 \to B_2$ is a map 
    $\Psi\colon B_1\sqcup\{0\} \to B_2 \sqcup \{0\}$ with $\Psi(0) = 0$, such that whenever $x\in B_1$ and $\Psi(x)\in B_2$, the map preserves weights and string lengths:
    \[ \wt(\Psi(x))=\wt(x),\qquad \varepsilon_i(\Psi(x))=\varepsilon_i(x),\qquad \varphi_i(\Psi(x))=\varphi_i(x). \]

    Moreover, $\Psi$ commutes with the crystal operators whenever both sides are nonzero:
    \[ \Psi(e_i(x))=e_i(\Psi(x)), \qquad \Psi(f_i(x))=f_i(\Psi(x)).\]
\end{definition}
A bijective crystal morphism is called a \defn{crystal isomorphism}. If such a map exists between $B_1$ and $B_2$, then we write $B_1 \cong B_2$.

\section{RSK in the lattice model setting}
\label{section.RSK}

Since uncrowding uses the same row insertion as the Robinson--Schensted--Knuth correspondence for semistandard tableaux, a natural question 
is to identify the analogue of RSK within the framework of lattice models.

Given a semistandard tableau $T$ of shape $\lambda$ with entries in $[n]$, let $S^\text{dec} \in \decoratedstate^0_\lambda$ be the trivial decorated state 
corresponding to $T$. To allow for the additional box created by insertion, we append one empty column to the right and view $S^\text{dec}$ on an $n \times (m+1)$ grid, where $m = \lambda_1+n$. We wish to describe the analogue of the row 
insertion of $u\in [n]$ into $T$ on $S^\text{dec}$ in the lattice model. 
Throughout, tableau rows are indexed from top to bottom, while lattice rows are indexed from bottom to top. Under this convention, the natural path 
$L_i$ corresponds to row $i$ in the tableau.

\begin{lemma}
\label{lem:paths-to-tab}
Let $T\in \SSYT^n(\lambda)$ and let $S^\text{dec} \in \decoratedstate^0_\lambda$ be the corresponding trivial decorated state under $\psi^{-1}$ 
in~\eqref{eq:bij-decoratedstate-SVT}. For each $1\leqslant i \leqslant n$, the natural path $L_i$ corresponds to row $i$ of $T$. 
\end{lemma}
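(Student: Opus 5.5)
Our plan is to make precise what "$L_i$ corresponds to row $i$ of $T$" means and then prove it by induction on the lattice row $j$. Concretely, we will show that the cells in row $i$ of $T$ containing entry $j$ are exactly the $\tt b_2$ vertices (dotted, contributing vertices) on $L_i$ in lattice row $j$. Equivalently, the horizontal displacement of $L_i$ in lattice row $j$ equals the number of $j$'s in row $i$ of $T$. We will also show that the vertical edge of $L_i$ between lattice rows $j$ and $j+1$ sits at column $\lambda^{(j)}_i+(j-i+1)$, which is the position of the $i$-th $1$ (counted from the right) in the $\{0,1\}$-sequence of $\lambda^{(j)}$ in a $j$-row frame. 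Here $\Lambda=\mathfrak P(S)$.

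The key steps are as follows.

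\textbf{Positions of the up arrows.} By the definition of $\mathfrak P$, the up arrows on the $j$-th row of vertical edges are at positions $\lambda^{(j)}_k+(j-k+1)$ for $1\leqslant k\leqslant j$, with the convention that row $j$ of edges encodes a partition with $j$ parts. This accounts for the offset: there are $j$ up arrows above lattice row $j$, and at the bottom boundary there are none.

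\textbf{Tracing the paths through one row.} In lattice row $j$, the path $L_j$ enters from the left. The up arrows entering from below, at positions $p_1>\dots>p_{j-1}$ (belonging to $L_1,\dots,L_{j-1}$ by induction), exit through the top at positions $q_1>\dots>q_j$. Since the horizontal strip condition gives the interlacing $q_1\geqslant p_1\geqslant q_2\geqslant\cdots$, the non-crossing rule at $\tt b_1$ vertices together with the southeast-most convention forces $L_k$ to go from $p_k$ to $q_k$, and forces $L_j$ to go from the left boundary to $q_j$. When $q_{k+1}=p_k$, the path turns up immediately; this is the $\tt a_2$ or $\tt b_1$ case, depending on whether a horizontal arrow arrives there.

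\textbf{Counting $\tt b_2$ vertices.} The $\tt b_2$ vertices on $L_k$ in row $j$ are the horizontal steps of $L_k$ through the vertices strictly between $p_k$ and $q_k$, plus possibly the start. We will count that there are exactly $q_k-p_k=\lambda^{(j)}_k-\lambda^{(j-1)}_k+1-1$, adjusted by the shift of frame. Carrying the offsets carefully, this gives $\lambda^{(j)}_k-\lambda^{(j-1)}_k$, which is the number of $j$'s that $\phi$ adds to row $k$ in step (iv). Since $S^{\text{dec}}$ is trivial, step (iii) adds nothing. For $L_j$ itself, it starts at column $0$ and reaches $q_j=\lambda^{(j)}_j+1$, giving $\lambda^{(j)}_j$ horizontal $\tt b_2$ steps, all after column $1$. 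Bumps do not contribute because their horizontal edge already carries a path.

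The main obstacle will be the bookkeeping in the last two steps. Two things need care. First, the index shift $(j-k+1)$ versus $(j-k)$ between consecutive rows must be handled so that the counts come out as $\lambda^{(j)}_k-\lambda^{(j-1)}_k$ and not off by one. Second, we must verify that $L_k$ indeed exits at the $k$-th arrow from the right, rather than some other path claiming it. We would handle the second point with a short argument: paths never cross and preserve left-to-right order, and $L_j$, entering from the far left, must take the leftmost exit. Summing over $j$ then shows that row $i$ of $T=\psi(S^{\text{dec}})$ is exactly the multiset of lattice rows in which $L_i$ has $\tt b_2$ vertices, which proves the lemma.
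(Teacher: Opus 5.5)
Your proposal is correct and is essentially the paper's argument: $L_i$ follows the $i$-th up arrow from the right in each row of vertical edges, which encodes $\lambda^{(j)}_i$, so $L_i$ records the column $\lambda^{(i)}_i,\dots,\lambda^{(n)}_i$ of the GT pattern, and $\phi$ turns this into row $i$ of $T$. You additionally make explicit, via interlacing, the path-tracing and the resulting $\tt b_2$ count, which the paper leaves implicit.

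One bookkeeping slip needs fixing. The horizontal displacement of $L_i$ in lattice row $j$ is $q_i-p_i=\lambda^{(j)}_i-\lambda^{(j-1)}_i+1$, which is one more than the number of $j$'s. The reason is that for $i<j$ the vertex at $p_i$ where $L_i$ enters from below is always $\tt a_2$ or $\tt b_1$, never $\tt b_2$. So the $\tt b_2$ vertices are exactly the $q_i-p_i-1$ vertices strictly between $p_i$ and $q_i$. Correct your ``equivalently'' sentence and the clause ``plus possibly the start'' accordingly; your final count $\lambda^{(j)}_i-\lambda^{(j-1)}_i$ is right.
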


\begin{proof} 
Fix $1\leqslant i\leqslant n$. By definition, $L_i$ is the natural path entering the lattice from the left boundary edge in row $i$ to the $i$-th exit arrow.
Equivalently, in each row of the lattice, $L_i$ follows the $i$-th vertical arrow when the arrows are read from right to left. Hence $L_i$ is determined 
by the $i$-th 1 from the right in each corresponding $\{0,1\}$-sequence.

Under $\mathfrak{P}$, the vertical edges leaving lattice row $j$ encode the partition $\lambda^{(j)}$. Therefore as $L_i$ passes through the lattice, it 
records the entries 
\[
	\lambda_{i}^{(i)}, \lambda_{i}^{{(i+1)}}, \dots, \lambda_{i}^{(n)}.
\]
These are precisely the entries in the $i$-th column of the left-aligned MGT pattern. Under the bijection $\phi$, this column determines row $i$ of $T$. 
\end{proof}

The following algorithm produces the trivial decorated state that corresponds to $T \leftarrow u.$ We call this the \defn{lattice insertion algorithm}.

\begin{enumerate}[label=\textbf{Step \arabic*:}]

    \item Along natural path $L_1$, locate the unique $\tt c_1$ vertex of Figure~\ref{fig:uncolored_wts} lying in lattice row $u,$ and denote it by $V_1.$ Initialize $k=1$ and $B_{0}$ to be empty. 

    \item  Starting with the north edge leaving $V_{k},$ follow the natural path $L_{k}$ until either an exit arrow is reached or the path first reaches a $\tt b_2$ vertex. Let $\gamma_{k}$ denote this path segment. 
    Shift the path segment $\gamma_{k}$ one unit to the right. Then turn $V_{k}$ into a $\tt b_2$ vertex by inserting a right arrow on the east edge of $V_{k}$, so that this new right arrow connects to the shifted path. The final arrow of the shifted path segment may overlap with an existing right arrow.
    
    If $B_{k-1}$ is nonempty, then $B_{k-1}$ is a non-trivial $\tt a_2$ vertex.  After shifting $\gamma_{k}$, either this vertex has become a $\tt b_1$ vertex, or it remains a bump. In the latter case, declare $B_{k-1}$ to have trivial weight.
 
    \item If $\gamma_{k}$ terminates at an exit arrow, stop.

    \item If $\gamma_{k}$ terminates at a $\tt b_2$ vertex, then after shifting $\gamma_{k}$, the local configuration at this terminal vertex changes from a $\tt b_2$ vertex to a non-trivial $\tt a_2$ vertex. Denote this new non-trivial bump by $B_{k}$.
    Let $V_{k+1}$ be the rightmost vertex of type $\tt c_1$ to the left of $B_k$ in the same lattice row. This places the algorithm on the next natural path $L_{k+1}$.
    
     \item Increase $k$ by 1 and repeat Steps 2--4. 
\end{enumerate}

\begin{example}
\label{ex:lattice-insertion}
Consider inserting $1$ into the tableau $T$ below. 

We denote by $V_1$ the vertex identified in Step 1. In iteration $k=1$, the algorithm shifts the dotted path segment $\gamma_1$, creates the non-trivial bump $B_1$, and identifies the next vertex $V_2$. In iteration $k=2$, the algorithm shifts the dotted path segment $\gamma_2$, declares $B_1$ to be trivial, creates the non-trivial bump $B_2$, and identifies the next vertex $V_3$. In iteration $k=3$, the algorithm shifts the dotted path segment $\gamma_3$, turns $B_2$ into a $\tt b_1$ vertex, and then terminates. 

In each lattice, the dotted path segment is the segment $\gamma_{k}$ shifted during that iteration. The algorithm stops in the third iteration because $\gamma_3$ reaches an exit arrow and does not create a new non-trivial $\tt a_2$ vertex.

\[\begin{minipage}{0.6\textwidth}
{\begin{tikzpicture}

    \draw (0,-0.75)--(0,4);
    \draw (1,-0.75)--(1,4);
    \draw (2,-0.75)--(2,4);
    \draw (3,-0.75)--(3,4);
    \draw (4, -0.75) -- (4,4);
    \draw (5, -0.75) -- (5,4);
    \draw (6, -0.75) -- (6,4);

    \draw (-1,0)--(6.5,0);
    \draw (-1,1)--(6.5,1);
    \draw (-1, 2) -- (6.5, 2);
    \draw (-1, 3) -- (6.5, 3);

    \draw[-{Stealth[length=3mm,width=2mm]}, Orange, line width=1pt] (-1,3)--(0,3);
    \draw[-{Stealth[length=3mm,width=2mm]}, Orange, line width=1pt] (0,3)--(0,4);

    \draw[-{Stealth[length=3mm,width=2mm]}, Green, line width=1pt] (-1,2)--(0,2);
    \draw[-{Stealth[length=3mm,width=2mm]}, Green, line width=1pt] (0,2)--(1,2);
    \draw[-{Stealth[length=3mm,width=2mm]}, Green, line width=1pt] (1,2)--(1,3);
    \draw[-{Stealth[length=3mm,width=2mm]}, Green, line width=1pt] (1,3)--(2,3);
    \draw[-{Stealth[length=3mm,width=2mm]}, Green, line width=1pt] (2,3)--(2,4);

    \draw[-{Stealth[length=3mm,width=2mm]}, red, line width=1pt] (-1,1)--(0,1);
    \draw[-{Stealth[length=3mm,width=2mm]}, red, line width=1pt] (0,1)--(1,1);
    \draw[-{Stealth[length=3mm,width=2mm]}, red, line width=1pt] (1,1)--(1,2);
    \draw[-{Stealth[length=3mm,width=2mm]}, red, line width=1pt] (1,2)--(2,2);
    \draw[-{Stealth[length=3mm,width=2mm]}, red, line width=1pt] (2,2)--(3,2);
    \draw[-{Stealth[length=3mm,width=2mm]}, red, line width=1pt] (3,2)--(3,3);
    \draw[-{Stealth[length=3mm,width=2mm]}, red, line width=1pt] (3,3)--(4,3);
    \draw[-{Stealth[length=3mm,width=2mm]}, red, line width=1pt] (4,3)--(5,3);
    \draw[-{Stealth[length=3mm,width=2mm]}, red, line width=1pt] (5,3)--(5,4);

    \draw[-{Stealth[length=3mm,width=2mm]}, blue, line width=1pt] (-1,0)--(0,0);
    \draw[-{Stealth[length=3mm,width=2mm]}, blue, line width=1pt] (0,0)--(1,0);
    \draw[-{Stealth[length=3mm,width=2mm]}, blue, line width=1pt] (1,0)--(2,0);
    \draw[dotted, -{Stealth[length=3mm,width=2mm]}, blue, line width=2.5pt] (2,0)--(2,1);
    \draw[dotted, -{Stealth[length=3mm,width=2mm]}, blue, line width=2.5pt] (2,1)--(3,1);
    \draw[-{Stealth[length=3mm,width=2mm]}, blue, line width=1pt] (3,1)--(4,1);
    \draw[-{Stealth[length=3mm,width=2mm]}, blue, line width=1pt] (4,1)--(4,2);
    \draw[-{Stealth[length=3mm,width=2mm]}, blue, line width=1pt] (4,2)--(5,2);
    \draw[-{Stealth[length=3mm,width=2mm]}, blue, line width=1pt] (5,2)--(5,3);
    \draw[-{Stealth[length=3mm,width=2mm]}, blue, line width=1pt] (5,3)--(6,3);
    \draw[-{Stealth[length=3mm,width=2mm]}, blue, line width=1pt] (6,3)--(6,4);

    \node at (-1.25,0) {$ z_1$};
    \node at (-1.25,1) {$ z_2$};
    \node at (-1.25, 2) {$z_3$};
    \node at (-1.25, 3) {$z_4$};

     \node[circle, draw=violet, thick, minimum size=10pt] at (2,0){};
     \node at (2.25,-0.4) {$V_1$};
    \node at (2.45,1.35) {$\gamma_1$};

     \node[circle, draw=violet, fill=violet, opacity=0.4, ultra thick, minimum size=7pt, inner sep=2pt] at (0,0){};
    \node[circle, draw=violet, fill=violet, opacity=0.4, ultra thick, minimum size=7pt, inner sep=2pt] at (1,0){};
    \node[circle, draw=violet, fill=violet, opacity=0.4, ultra thick, minimum size=7pt, inner sep=2pt] at (3,1){};
    \node[circle, draw=violet, fill=violet, opacity=0.4, ultra thick, minimum size=7pt, inner sep=2pt] at (0,1){};
    \node[circle, draw=violet, fill=violet, opacity=0.4, ultra thick, minimum size=7pt, inner sep=2pt] at (2,2){};
    \node[circle, draw=violet, fill=violet, opacity=0.4, ultra thick, minimum size=7pt, inner sep=2pt] at (4,3){};
    \node[circle, draw=violet, fill=violet, opacity=0.4, ultra thick, minimum size=7pt, inner sep=2pt] at (0,2){};

    \node[cross out, draw=violet, ultra thick, minimum size=10pt, inner sep=2pt] at (2,1){};
    \node[cross out, draw=violet, ultra thick, minimum size=10pt, inner sep=2pt] at (4,2){};
    \node[cross out, draw=violet, ultra thick, minimum size=10pt, inner sep=2pt] at (3,3){};
    \node[cross out, draw=violet, ultra thick, minimum size=10pt, inner sep=2pt] at (1,3){};

    \draw [-Stealth](7.5,1.5) -- (8.5,1.5);   
    \node at (8,1.75) {$\psi$};
    
  \end{tikzpicture}}
  \end{minipage}
\begin{minipage}{0.3\textwidth}
      $\ytableausetup{boxsize=2.2em}
      \qquad \; \; \ytableaushort{1 1 2 , 2 3 4, 3 } \; = \; T_0$

      \end{minipage}
\]
    \commentAlt{The initial decorated state before applying the lattice insertion algorithm, together with the corresponding tableau $T_0$.}
    \commentLongAlt{The lattice state shows the path segment $\gamma_1$ beginning at the marked vertex $V_1$. The dotted blue segment indicates the portion shifted in the first iteration. Non-trivial weights are circled, and trivial bumps are crossed out.}
\vspace{0.5cm}
\[
    \begin{minipage}{0.6\textwidth}
{\begin{tikzpicture}

    \draw (0,-0.75)--(0,4);
    \draw (1,-0.75)--(1,4);
    \draw (2,-0.75)--(2,4);
    \draw (3,-0.75)--(3,4);
    \draw (4, -0.75) -- (4,4);
    \draw (5, -0.75) -- (5,4);
    \draw (6, -0.75) -- (6,4);

    \draw (-1,0)--(6.5,0);
    \draw (-1,1)--(6.5,1);
    \draw (-1, 2) -- (6.5, 2);
    \draw (-1, 3) -- (6.5, 3);
    
    \draw[-{Stealth[length=3mm,width=2mm]}, Orange, line width=1pt] (-1,3)--(0,3);
    \draw[-{Stealth[length=3mm,width=2mm]}, Orange, line width=1pt] (0,3)--(0,4);

    \draw[-{Stealth[length=3mm,width=2mm]}, Green, line width=1pt] (-1,2)--(0,2);
    \draw[-{Stealth[length=3mm,width=2mm]}, Green, line width=1pt] (0,2)--(1,2);
    \draw[-{Stealth[length=3mm,width=2mm]}, Green, line width=1pt] (1,2)--(1,3);
    \draw[-{Stealth[length=3mm,width=2mm]}, Green, line width=1pt] (1,3)--(2,3);
    \draw[-{Stealth[length=3mm,width=2mm]}, Green, line width=1pt] (2,3)--(2,4);

    \draw[-{Stealth[length=3mm,width=2mm]}, red, line width=1pt] (-1,1)--(0,1);
    \draw[-{Stealth[length=3mm,width=2mm]}, red, line width=1pt] (0,1)--(1,1);
    \draw[dotted, -{Stealth[length=3mm,width=2mm]}, red, line width=2.5pt] (1,1)--(1,2);
    \draw[dotted, -{Stealth[length=3mm,width=2mm]}, red, line width=2.5pt] (1,2)--(2,2);
    \draw[-{Stealth[length=3mm,width=2mm]}, red, line width=1pt] (2,2)--(3,2);
    \draw[-{Stealth[length=3mm,width=2mm]}, red, line width=1pt] (3,2)--(3,3);
    \draw[-{Stealth[length=3mm,width=2mm]}, red, line width=1pt] (3,3)--(4,3);
    \draw[-{Stealth[length=3mm,width=2mm]}, red, line width=1pt] (4,3)--(5,3);
    \draw[-{Stealth[length=3mm,width=2mm]}, red, line width=1pt] (5,3)--(5,4);

    \draw[-{Stealth[length=3mm,width=2mm]}, blue, line width=1pt] (-1,0)--(0,0);
    \draw[-{Stealth[length=3mm,width=2mm]}, blue, line width=1pt] (0,0)--(1,0);
    \draw[-{Stealth[length=3mm,width=2mm]}, blue, line width=1pt] (1,0)--(2,0);
    \draw[-{Stealth[length=3mm,width=2mm]}, blue, line width=1pt] (2,0)--(3,0);
    \draw[-{Stealth[length=3mm,width=2mm]}, blue, line width=1pt] (3,0)--(3,1);
    \draw[-{Stealth[length=3mm,width=2mm]}, blue, line width=1pt] (3,1)--(4,1);
    \draw[-{Stealth[length=3mm,width=2mm]}, blue, line width=1pt] (4,1)--(4,2);
    \draw[-{Stealth[length=3mm,width=2mm]}, blue, line width=1pt] (4,2)--(5,2);
    \draw[-{Stealth[length=3mm,width=2mm]}, blue, line width=1pt] (5,2)--(5,3);
    \draw[-{Stealth[length=3mm,width=2mm]}, blue, line width=1pt] (5,3)--(6,3);
    \draw[-{Stealth[length=3mm,width=2mm]}, blue, line width=1pt] (6,3)--(6,4);

    \node at (-1.25,0) {$ z_1$};
    \node at (-1.25,1) {$ z_2$};
    \node at (-1.25, 2) {$z_3$};
    \node at (-1.25, 3) {$z_4$};

    \node[circle, draw=violet, fill=violet, opacity=0.4, ultra thick, minimum size=7pt, inner sep=2pt] at (0,0){};
    \node[circle, draw=violet, fill=violet, opacity=0.4, ultra thick, minimum size=7pt, inner sep=2pt] at (1,0){};
    \node[circle, draw=violet, fill=violet, opacity=0.4, ultra thick, minimum size=7pt, inner sep=2pt] at (2,0){};
    \node[circle, draw=violet, fill=violet, opacity=0.4, ultra thick, minimum size=7pt, inner sep=2pt] at (0,1){};
    \node[circle, draw=violet, fill=violet, opacity=0.4, ultra thick, minimum size=7pt, inner sep=2pt] at (2,2){};
    \node[circle, draw=violet, fill=violet, opacity=0.4, ultra thick, minimum size=7pt, inner sep=2pt] at (4,3){};
    \node[circle, draw=violet, fill=violet, opacity=0.4, ultra thick, minimum size=7pt, inner sep=2pt] at (0,2){}; 

    \node[cross out, draw=violet, ultra thick, minimum size=10pt, inner sep=2pt] at (3,3){};
    \node[cross out, draw=violet, ultra thick, minimum size=10pt, inner sep=2pt] at (4,2){};
    \node[cross out, draw=violet, ultra thick, minimum size=10pt, inner sep=2pt] at (1,3){};

    \node[circle, draw=violet, thick, minimum size=10pt] at (1,1){};
    \node at (1.25,0.6) {$V_2$};

    \node[circle, draw = violet, ultra thick, minimum size=10pt, inner sep=2pt] at (3,1){};
    \node at (3.3,0.6) {$B_1$};

    \node at (1.45,2.35) {$\gamma_2$};
    
    \draw [-Stealth](7.5,1.5) -- (8.5,1.5);   
    \node at (8,1.75) {$\psi$};
    
  \end{tikzpicture}}
    \end{minipage}
    \begin{minipage}{0.3\textwidth}
{$\qquad \; \; \ytableausetup{boxsize=2.2em}
\ytableaushort{1 1 {1,2}, 2 3 4, 3 }  \; = \; T_1 $}
    
    \end{minipage}
\]
    \commentAlt{The decorated state after the first iteration of the lattice insertion algorithm, together with the corresponding tableau $T_1$.}
    \commentLongAlt{The lattice state shows the first intermediate state after shifting $\gamma_1$. The next marked vertex is $V_2$, the non-trivial bump is $B_1$, and the dotted red segment $\gamma_2$ indicates the portion shifted in the next iteration.}
    \vspace{0.5cm}
\[
    \begin{minipage}{0.6\textwidth}
{\begin{tikzpicture}

    \draw (0,-0.75)--(0,4);
    \draw (1,-0.75)--(1,4);
    \draw (2,-0.75)--(2,4);
    \draw (3,-0.75)--(3,4);
    \draw (4, -0.75) -- (4,4);
    \draw (5, -0.75) -- (5,4);
    \draw (6, -0.75) -- (6,4);

    \draw (-1,0)--(6.5,0);
    \draw (-1,1)--(6.5,1);
    \draw (-1, 2) -- (6.5, 2);
    \draw (-1, 3) -- (6.5, 3);
    
    \draw[-{Stealth[length=3mm,width=2mm]}, Orange, line width=1pt] (-1,3)--(0,3);
    \draw[-{Stealth[length=3mm,width=2mm]}, Orange, line width=1pt] (0,3)--(0,4);

    \draw[-{Stealth[length=3mm,width=2mm]}, Green, line width=1pt] (-1,2)--(0,2);
    \draw[-{Stealth[length=3mm,width=2mm]}, Green, line width=1pt] (0,2)--(1,2);
    \draw[dotted, -{Stealth[length=3mm,width=2mm]}, Green, line width=2.5pt] (1,2)--(1,3);
    \draw[dotted, -{Stealth[length=3mm,width=2mm]}, Green, line width=2.5pt] (1,3)--(2,3);
    \draw[dotted, -{Stealth[length=3mm,width=2mm]}, Green, line width=2.5pt] (2,3)--(2,4);

   \draw[-{Stealth[length=3mm,width=2mm]}, red, line width=1pt] (-1,1)--(0,1);
    \draw[-{Stealth[length=3mm,width=2mm]}, red, line width=1pt] (0,1)--(1,1);
    \draw[-{Stealth[length=3mm,width=2mm]}, red, line width=1pt] (1,1)--(2,1);
    \draw[-{Stealth[length=3mm,width=2mm]}, red, line width=1pt] (2,1)--(2,2);
    \draw[-{Stealth[length=3mm,width=2mm]}, red, line width=1pt] (2,2)--(3,2);
    \draw[-{Stealth[length=3mm,width=2mm]}, red, line width=1pt] (3,2)--(3,3);
    \draw[-{Stealth[length=3mm,width=2mm]}, red, line width=1pt] (3,3)--(4,3);
    \draw[-{Stealth[length=3mm,width=2mm]}, red, line width=1pt] (4,3)--(5,3);
    \draw[-{Stealth[length=3mm,width=2mm]}, red, line width=1pt] (5,3)--(5,4);

    \draw[-{Stealth[length=3mm,width=2mm]}, blue, line width=1pt] (-1,0)--(0,0);
    \draw[-{Stealth[length=3mm,width=2mm]}, blue, line width=1pt] (0,0)--(1,0);
    \draw[-{Stealth[length=3mm,width=2mm]}, blue, line width=1pt] (1,0)--(2,0);
    \draw[-{Stealth[length=3mm,width=2mm]}, blue, line width=1pt] (2,0)--(3,0);
    \draw[-{Stealth[length=3mm,width=2mm]}, blue, line width=1pt] (3,0)--(3,1);
    \draw[-{Stealth[length=3mm,width=2mm]}, blue, line width=1pt] (3,1)--(4,1);
    \draw[-{Stealth[length=3mm,width=2mm]}, blue, line width=1pt] (4,1)--(4,2);
    \draw[-{Stealth[length=3mm,width=2mm]}, blue, line width=1pt] (4,2)--(5,2);
    \draw[-{Stealth[length=3mm,width=2mm]}, blue, line width=1pt] (5,2)--(5,3);
    \draw[-{Stealth[length=3mm,width=2mm]}, blue, line width=1pt] (5,3)--(6,3);
    \draw[-{Stealth[length=3mm,width=2mm]}, blue, line width=1pt] (6,3)--(6,4);

    \node at (-1.25,0) {$ z_1$};
    \node at (-1.25,1) {$ z_2$};
    \node at (-1.25, 2) {$z_3$};
    \node at (-1.25, 3) {$z_4$};

    \node[circle, draw=violet, fill=violet, opacity=0.4, ultra thick, minimum size=7pt, inner sep=2pt] at (0,0){};
    \node[circle, draw=violet, fill=violet, opacity=0.4, ultra thick, minimum size=7pt, inner sep=2pt] at (1,0){};
    \node[circle, draw=violet, fill=violet, opacity=0.4, ultra thick, minimum size=7pt, inner sep=2pt] at (2,0){};
    \node[circle, draw=violet, fill=violet, opacity=0.4, ultra thick, minimum size=7pt, inner sep=2pt] at (0,1){};
    \node[circle, draw=violet, fill=violet, opacity=0.4, ultra thick, minimum size=7pt, inner sep=2pt] at (4,3){};
    \node[circle, draw=violet, fill=violet, opacity=0.4, ultra thick, minimum size=7pt, inner sep=2pt] at (0,2){}; 
    \node[circle, draw=violet, fill=violet, opacity=0.4, ultra thick, minimum size=7pt, inner sep=2pt] at (1,1){};

 
    \node[cross out, draw=violet, ultra thick, minimum size=10pt, inner sep=2pt] at (3,3){};
    \node[cross out, draw=violet, ultra thick, minimum size=10pt, inner sep=2pt] at (4,2){};
    \node[cross out, draw=violet, ultra thick, minimum size=10pt, inner sep=2pt] at (1,3){};
    \node[cross out, draw=violet, ultra thick, minimum size=10pt, inner sep=2pt] at (3,1){};

   \node[circle, draw=violet, thick, minimum size=10pt] at (1,2){};
    \node at (1.25,1.6) {$V_3$};
    
    \node[circle, draw = violet, ultra thick, minimum size=10pt, inner sep=2pt] at (2,2){};
    \node at (2.3,1.6) {$B_2$};
    
    \node at (2.4,3.45) {$\gamma_3$};

    \draw [-Stealth](7.5,1.5) -- (8.5,1.5);   
    \node at (8,1.75) {$\psi$};

  \end{tikzpicture}}
    \end{minipage}
    \begin{minipage}{0.3\textwidth}
$\qquad \; \; \ytableausetup{boxsize=2.2em}
       \ytableaushort{1 1 1, 2 {2,3} 4,3}  \; = \; T_2 $
    \end{minipage}
\]
    \commentAlt{The decorated state after the second iteration of the lattice insertion algorithm, together with the corresponding tableau $T_2$.}
    \commentLongAlt{The lattice state shows the second intermediate state. The next marked vertex is $V_3$, the non-trivial bump is $B_2$, and the dotted green segment $\gamma_3$ indicates the portion shifted in the final iteration.}
\vspace{0.5cm}
\[
    \begin{minipage}{0.6\textwidth}
{\begin{tikzpicture}

    \draw (0,-0.75)--(0,4);
    \draw (1,-0.75)--(1,4);
    \draw (2,-0.75)--(2,4);
    \draw (3,-0.75)--(3,4);
    \draw (4, -0.75) -- (4,4);
    \draw (5, -0.75) -- (5,4);
    \draw (6, -0.75) -- (6,4);

    \draw (-1,0)--(6.5,0);
    \draw (-1,1)--(6.5,1);
    \draw (-1, 2) -- (6.5, 2);
    \draw (-1, 3) -- (6.5, 3);
    
    \draw[-{Stealth[length=3mm,width=2mm]}, Orange, line width=1pt] (-1,3)--(0,3);
    \draw[-{Stealth[length=3mm,width=2mm]}, Orange, line width=1pt] (0,3)--(0,4);

    \draw[-{Stealth[length=3mm,width=2mm]}, Green, line width=1pt] (-1,2)--(0,2);
    \draw[-{Stealth[length=3mm,width=2mm]}, Green, line width=1pt] (0,2)--(1,2);
    \draw[-{Stealth[length=3mm,width=2mm]}, Green, line width=1pt] (1,2)--(2,2);
    \draw[-{Stealth[length=3mm,width=2mm]}, Green, line width=1pt] (2,2)--(2,3);
    \draw[-{Stealth[length=3mm,width=2mm]}, Green, line width=1pt] (2,3)--(3,3);
    \draw[-{Stealth[length=3mm,width=2mm]}, Green, line width=1pt] (3,3)--(3,4);

   \draw[-{Stealth[length=3mm,width=2mm]}, red, line width=1pt] (-1,1)--(0,1);
    \draw[-{Stealth[length=3mm,width=2mm]}, red, line width=1pt] (0,1)--(1,1);
    \draw[-{Stealth[length=3mm,width=2mm]}, red, line width=1pt] (1,1)--(2,1);
    \draw[-{Stealth[length=3mm,width=2mm]}, red, line width=1pt] (2,1)--(2,2);
    \draw[-{Stealth[length=3mm,width=2mm]}, red, line width=1pt] (2,2)--(3,2);
    \draw[-{Stealth[length=3mm,width=2mm]}, red, line width=1pt] (3,2)--(3,3);
    \draw[-{Stealth[length=3mm,width=2mm]}, red, line width=1pt] (3,3)--(4,3);
    \draw[-{Stealth[length=3mm,width=2mm]}, red, line width=1pt] (4,3)--(5,3);
    \draw[-{Stealth[length=3mm,width=2mm]}, red, line width=1pt] (5,3)--(5,4);

    \draw[-{Stealth[length=3mm,width=2mm]}, blue, line width=1pt] (-1,0)--(0,0);
    \draw[-{Stealth[length=3mm,width=2mm]}, blue, line width=1pt] (0,0)--(1,0);
    \draw[-{Stealth[length=3mm,width=2mm]}, blue, line width=1pt] (1,0)--(2,0);
    \draw[-{Stealth[length=3mm,width=2mm]}, blue, line width=1pt] (2,0)--(3,0);
    \draw[-{Stealth[length=3mm,width=2mm]}, blue, line width=1pt] (3,0)--(3,1);
    \draw[-{Stealth[length=3mm,width=2mm]}, blue, line width=1pt] (3,1)--(4,1);
    \draw[-{Stealth[length=3mm,width=2mm]}, blue, line width=1pt] (4,1)--(4,2);
    \draw[-{Stealth[length=3mm,width=2mm]}, blue, line width=1pt] (4,2)--(5,2);
    \draw[-{Stealth[length=3mm,width=2mm]}, blue, line width=1pt] (5,2)--(5,3);
    \draw[-{Stealth[length=3mm,width=2mm]}, blue, line width=1pt] (5,3)--(6,3);
    \draw[-{Stealth[length=3mm,width=2mm]}, blue, line width=1pt] (6,3)--(6,4);

    \node at (-1.25,0) {$ z_1$};
    \node at (-1.25,1) {$ z_2$};
    \node at (-1.25, 2) {$z_3$};
    \node at (-1.25, 3) {$z_4$};

    \node[circle, draw=violet, fill=violet, opacity=0.4, ultra thick, minimum size=7pt, inner sep=2pt] at (0,0){};
    \node[circle, draw=violet, fill=violet, opacity=0.4, ultra thick, minimum size=7pt, inner sep=2pt] at (1,0){};
    \node[circle, draw=violet, fill=violet, opacity=0.4, ultra thick, minimum size=7pt, inner sep=2pt] at (2,0){};
    \node[circle, draw=violet, fill=violet, opacity=0.4, ultra thick, minimum size=7pt, inner sep=2pt] at (0,1){};
    \node[circle, draw=violet, fill=violet, opacity=0.4, ultra thick, minimum size=7pt, inner sep=2pt] at (1,1){};
    \node[circle, draw=violet, fill=violet, opacity=0.4, ultra thick, minimum size=7pt, inner sep=2pt] at (4,3){};
    \node[circle, draw=violet, fill=violet, opacity=0.4, ultra thick, minimum size=7pt, inner sep=2pt] at (0,2){};
    \node[circle, draw=violet, fill=violet, opacity=0.4, ultra thick, minimum size=7pt, inner sep=2pt] at (1,2){};
    
    \node[cross out, draw=violet, ultra thick, minimum size=10pt, inner sep=2pt] at (4,2){};
    \node[cross out, draw=violet, ultra thick, minimum size=10pt, inner sep=2pt] at (3,1){};
    \node[cross out, draw=violet, ultra thick, minimum size=10pt, inner sep=2pt] at (2,3){};

    \draw [-Stealth](7.5,1.5) -- (8.5,1.5);   
    \node at (8,1.75) {$\psi$};
  \end{tikzpicture}}
    \end{minipage}
    \begin{minipage}{0.3\textwidth}
      $\qquad \; \; \ytableausetup{boxsize=2.2em}
      \ytableaushort{1 1 1, 2 2 4, 3 3}  \; = \; T_3 $
    \end{minipage}
\]
    \commentAlt{The final decorated state produced by the lattice insertion algorithm, together with the corresponding tableau $T_3$.}
    \commentLongAlt{The lattice state shows the final output of the lattice insertion algorithm. No non-trivial bump remains, and the corresponding tableau is $T_3$.}
\end{example}

\begin{proposition}
\label{prop:lattice_INSERTION_algorithm}
    Let $T$ be a semistandard tableau of shape $\lambda$ with entries in $[n]$, and let $S^\text{dec} \in \decoratedstate^0_\lambda$ be the corresponding 
    trivial decorated state. For $u\in [n]$, the lattice insertion algorithm produces the trivial decorated state corresponding to $T \leftarrow u$.
\end{proposition}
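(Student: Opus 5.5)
We prove the statement by induction on the iteration index $k$. The intermediate decorated states are matched with intermediate set-valued tableaux: we track the set-valued tableau $T_k:=\psi(S_k^{\text{dec}})$ attached to the decorated state after $k$ iterations, as in Example~\ref{ex:lattice-insertion}. The invariant we aim for is the following. Suppose row insertion $T\leftarrow u$ bumps $x_1=u<\dots$, with $x_{k}$ inserted into row $k$ displacing $x_{k+1}$. Then $T_k$ equals the tableau obtained from $T$ after the first $k$ bumping steps, except that $x_{k+1}$ is not yet moved. Instead, it remains as a crowded entry in its old cell of row $k$, sitting together with $x_k$, which has replaced it as the cell minimum. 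This crowded entry is the non-trivial bump $B_k$. When the algorithm stops, $T_k$ has no multicell and equals $T\leftarrow u$, and the decorated state is trivial.

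The translation tool is Lemma~\ref{lem:paths-to-tab}, extended to decorated states. Along $L_i$, each $\tt b_2$ vertex in lattice row $j$ is one entry $j$ in row $i$ of the tableau, since a right step of $L_i$ in row $j$ that is not a bump contributes a box filled with $j$ via $\phi$. A non-trivial bump on $L_i$ in lattice row $j$ is an extra entry $j$ placed in the rightmost cell of row $i$ of $T_{j-1}$, again by $\phi$. The $\tt c_1$ vertices on $L_i$ are the rows where the path turns up. I would record these facts as a preliminary observation, reading off horizontal arrows of $L_i$ through $\mathfrak P$ and $\phi$ exactly as in the proof of Lemma~\ref{lem:paths-to-tab}.

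\textbf{One iteration.} Here $V_k$ is the $\tt c_1$ vertex of $L_k$ in lattice row $x_k$, i.e.\ the point after the last entry $\le x_k$ of row $k$. Making $V_k$ a $\tt b_2$ vertex adds an entry $x_k$ to row $k$. Shifting $\gamma_k$ right by one keeps all later entries of row $k$ up to the first entry $>x_k$, which is the first $\tt b_2$ vertex reached. That vertex lies in lattice row $x_{k+1}$ and, after the shift, is a non-trivial bump. This shows that $x_{k+1}$ has left its position as a box entry but survives as a crowded entry of the cell now containing $x_k$, so the row length is unchanged. If instead an exit arrow is reached, row $k$ simply gains a box at its end, matching termination of Schensted insertion; the extra column appended to the grid guarantees room.

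Next, the choice of $V_{k+1}$ as the rightmost $\tt c_1$ vertex left of $B_k$ in lattice row $x_{k+1}$ must be the $\tt c_1$ vertex of $L_{k+1}$ in that row. Non-crossing of the natural paths, with $L_{k+1}$ lying northwest of $L_k$, gives this. It then matches locating the first entry of row $k+1$ exceeding $x_{k+1}$. Shifting $\gamma_{k+1}$ removes or trivializes $B_k$, either by converting it into a $\tt b_1$ vertex or by declaring it trivial, which deletes the crowded entry $x_{k+1}$ from row $k$ as it is inserted into row $k+1$.

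\textbf{Main obstacle.} The hardest part is the local vertex bookkeeping, namely checking that every shift yields an admissible state. Two things need care. First, shifting $\gamma_k$ one unit right must not collide with $L_{k+1}$; this should follow from semistandardness, i.e.\ the column strictness encoded by the horizontal-strip/interlacing condition. Second, the overlapping final arrow and the vertex $B_{k-1}$ must fall into one of the five allowed types. My plan is to enumerate the vertex configurations along the shifted segment: interior vertices of $\gamma_k$ are translated verbatim, and only the two endpoints and $B_{k-1}$ change. At each of these I would check admissibility and the claimed weight change against the GT interlacing inequalities.
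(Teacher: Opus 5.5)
Your proposal is correct and follows the paper's proof. Both arguments match each iteration of the lattice insertion algorithm with one Schensted bumping step via Lemma~\ref{lem:paths-to-tab}, with the bumped entry temporarily surviving as a crowded entry in its old cell (the non-trivial bump $B_k$) until the next iteration removes or trivializes it; your checks that $V_{k+1}$ lies on $L_{k+1}$ and that each shift is admissible go beyond what the paper writes out.

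There is one slip in your obstacle paragraph. Shifting $\gamma_k$ to the right moves it away from $L_{k+1}$, so the only possible collision is with $L_{k-1}$. Also, interior vertices of $\gamma_k$ are not simply translated, since shared $\tt b_1$ vertices can change type. You can avoid this local bookkeeping entirely: after iteration $k$, the vertical edges encode the GT pattern of the cell-minimum tableau whose rows $1,\dots,k$ are those of $T\leftarrow u$ and whose other rows are those of $T$. That tableau is semistandard, so the state is the unique admissible one with that pattern.
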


\begin{proof}
By Lemma~\ref{lem:paths-to-tab}, the natural path $L_1$ corresponds to the first row of $T$. The vertices of type $\tt b_2$ along $L_1$ in lattice 
row $i$ correspond to the entries equal to $i$ in the first row of $T$. By identifying $V_1$ in Step 1 as the vertex of type $\tt c_1$ in row $u,$ we have 
determined the vertex that precedes any $\tt b_2$ vertex in row $r$ where $r > u.$ 
This is equivalent to identifying the smallest entry $x$ in row $1$ of $T$ such that $x > u$, if such an entry exists.   
    
Identify $\gamma_1$ as in Step 2. 
If $\gamma_1$ terminates at an exit arrow, then there is no $\tt b_2$ vertex after $V_1$ along $L_1.$ 
Equivalently, by the bijection $\psi$, there is no entry greater than $u$ in the first row of $T.$ By shifting $\gamma_1,$ we create a $\tt b_2$ vertex in 
row $u$ and shift the exit arrow one column to the right. This corresponds to adding a box with entry $u$ to the end of row $1$ of $T.$ 

Now suppose $\gamma_1$ terminates at a $\tt b_2$ vertex. Then there exists an entry $x$ in the first row of $T$ such that $x > u.$ Since $\gamma_1$ terminates at the first encountered $\tt b_2$ vertex, this entry $x$ is the smallest such entry in the first row of $T$ greater than $u$.
Shifting $\gamma_1$ creates a $\tt b_2$ vertex in row $u$, which corresponds to inserting $u$ into row $1$ of $T$. The terminal $\tt b_2$ vertex is changed into a non-trivial $\tt a_2$ vertex, $B_1$, which records the entry $x$ that is temporarily an excess entry. Thus, after the first iteration, the decorated state corresponds to the intermediate set-valued tableau where $u$ and $x$ lie in the same multicell.

The algorithm then identifies $V_2$ on the next natural path $L_2$, which corresponds to inserting the bumped entry $x$ into the second row of $T$. In the next iteration, shifting $\gamma_2$ creates a $\tt b_2$ vertex on $L_2$ in the lattice row corresponding to the entry $x$, so $x$ is inserted into row $2$ of $T$.
After shifting $\gamma_2$, the previously created bump $B_1$ is resolved. It either becomes a $\tt b_1$ vertex or remains a bump, in which case it is declared to have trivial weight. In either case, $B_1$ no longer represents an excess entry, matching the fact that $x$ has been moved into the next row. 

The same argument holds inductively. At iteration $k>1$, the algorithm inserts the entry bumped from row $k-1$ into row $k$. If the shifted path $\gamma_k$ terminates at a $\tt b_2$ vertex, then the corresponding entry is bumped into the next row and recorded by the new non-trivial bump $B_k$. If $\gamma_k$ terminates at an exit arrow, then no entry is bumped, and the shifted exit arrow corresponds to adding a new box to row $k$ of $T$. Thus, each iteration of the lattice insertion algorithm coincides with one step of row insertion, and the final lattice state is the trivial decorated state corresponding to $T \leftarrow u$.
\end{proof}

\begin{proposition}
    Let $w= w_1 w_2 \ldots w_\ell$ be a word and let $\RSK(w)=(P,Q)$. Then $(P,Q)$ can be reconstructed from the lattice model 
    after successively lattice inserting the letters of $w$ into the admissible state for $\emptyset.$
\end{proposition}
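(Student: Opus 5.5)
We will argue by induction on $\ell$, using \Cref{prop:lattice_INSERTION_algorithm} for the $P$-tableau and reading off the new box of each insertion from the lattice for the $Q$-tableau. Let $S_0$ be the (unique) admissible state for $\lambda=\emptyset$, and let $S_k$ be the trivial decorated state obtained from $S_{k-1}$ by lattice inserting $w_k$. Each insertion adds one empty column on the right, so $S_k$ lives on an $n\times(n+k)$ grid. By \Cref{prop:lattice_INSERTION_algorithm} and induction on $k$, $S_k$ is the trivial decorated state corresponding to $P(w_1\cdots w_k)$. Hence $P=\psi(S_\ell)$. Concretely, by \Cref{lem:paths-to-tab}, row $i$ of $P$ is read off from the lattice rows of the $\tt b_2$ vertices along the natural path $L_i$ in $S_\ell$.

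For $Q$, we will show that the row of the new box created at step $k$ is visible in the lattice. The lattice insertion algorithm at step $k$ terminates at some iteration $j$, when $\gamma_j$ reaches an exit arrow. By the proof of \Cref{prop:lattice_INSERTION_algorithm}, this termination corresponds exactly to appending a box at the end of row $j$ of the tableau. In the lattice, this shows up as the $j$-th exit arrow (counted from the right) moving one column to the right, while all other exit arrows stay fixed. Equivalently, the top $\{0,1\}$-sequence of $S_k$ encodes the shape of $P(w_1\cdots w_k)$, and it differs from that of $S_{k-1}$, after padding with a $0$, by the swap of a single adjacent $10$ to $01$. The position of that swap determines $j$.

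Then $Q$ is defined by placing the entry $k$ in row $j_k$ at the end of that row; more generally one places $a_k$ for a generalized permutation. This agrees with the recording tableau of RSK, since RSK records $a_k$ in the box created by inserting $b_k=w_k$. There are two ways to get the data $j_1,\dots,j_\ell$ from the lattice. One can record the termination path index $j_k$ during each run of the algorithm. Alternatively, one can read it from the sequence of top boundaries of $S_0,S_1,\dots,S_\ell$, which forms a chain of shapes. The recording tableau is exactly the chain of top-row partitions, that is, a GT-type pattern stacked over the insertion time.

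The main obstacle is making rigorous that the only change at the top boundary is a single exit arrow shifting right, and identifying which one. This must be checked against the shifting rule in Step 2. Intermediate segments $\gamma_i$ with $i<j$ end at $\tt b_2$ vertices strictly inside the lattice, so they do not alter exit arrows. The final segment $\gamma_j$ contains the exit arrow and is shifted right by one. The extra empty column guarantees room for this shift. Once this is checked, the claim follows from the tableau-level bookkeeping already established in \Cref{prop:lattice_INSERTION_algorithm}.
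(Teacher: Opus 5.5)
Your proposal is correct and follows essentially the same route as the paper, which applies \Cref{prop:lattice_INSERTION_algorithm} inductively to identify each intermediate state with $P(w_1\cdots w_k)$. It then notes that each lattice insertion moves exactly one exit arrow, so $Q$ is built by placing $k$ in the box $P_k/P_{k-1}$. Your extra check, that only the terminal segment $\gamma_j$ reaches the top boundary while the earlier segments end at $\tt b_2$ vertices, fills in a detail the paper asserts without comment.
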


\begin{proof}
    Let $(P_i, Q_i)$ denote the image of $\RSK(w_1 w_2 \ldots w_i).$ Let $S_0^\text{dec}$ be the trivial decorated state corresponding to the empty tableau and $S_1^\text{dec}, S_2^\text{dec}, \ldots, S_\ell^\text{dec}$ be the sequence of decorated states obtained from successively lattice inserting the letters of $w.$ That is, $S_i^{\text{dec}}$ is the 
    decorated state corresponding to $\emptyset \leftarrow w_1\leftarrow\cdots\leftarrow w_i.$ By Proposition~\ref{prop:lattice_INSERTION_algorithm}, 
    $S_i^{\text{dec}}$ corresponds to the tableau $P_i.$ Each step from $S_i^{\text{dec}}$ to $S_{i+1}^{\text{dec}}$ is one application of the lattice insertion algorithm which moves exactly 
    one exit arrow. Therefore, we may construct $Q_{i+1}$ recursively by adding one box to $Q_{i}$ in the location of $P_{i+1}/P_i$ and filling it with $i+1.$
\end{proof}

\begin{example}
    Consider the word $w=312.$ For each letter insertion step, we give the corresponding admissible state and the image of the RSK correspondence 
    for semistandard Young tableaux below:

 \[
    \begin{minipage}{0.25\linewidth}
      $\emptyset: $
  \end{minipage}  
\begin{minipage}{0.4\linewidth}
{\begin{tikzpicture}

    \draw (0,-0.75)--(0,3);
    \draw (1,-0.75)--(1,3);
    \draw (2,-0.75)--(2,3);
    \draw (3,-0.75)--(3,3);
    \draw (4, -0.75) -- (4,3);

    \draw (-1,0)--(4.5,0);
    \draw (-1,1)--(4.5,1);
    \draw (-1, 2) -- (4.5, 2);
   
   
    \draw[-{Stealth[length=3mm,width=2mm]}, Green, line width=1pt] (-1,2)--(0,2);
    \draw[-{Stealth[length=3mm,width=2mm]}, Green, line width=1pt] (0,2)--(0,3);
    
     \draw[-{Stealth[length=3mm,width=2mm]},red, line width=1.pt] (-1,1)--(0,1);
     \draw[-{Stealth[length=3mm,width=2mm]},red, line width=1pt] (0,1)--(0,2);
     \draw[-{Stealth[length=3mm,width=2mm]},red, line width = 1pt] (0,2)--(1,2);
    \draw[-{Stealth[length=3mm,width=2mm]},red, line width=1pt] (1,2)--(1,3);
 
    \draw[-{Stealth[length=3mm,width=2mm]},blue,line width=1pt] (-1,0)--(0,0);
     \draw[-{Stealth[length=3mm,width=2mm]},blue, line width=1pt] (0,0)--(0,1);
    \draw[-{Stealth[length=3mm,width=2mm]},blue, line width=1pt] (0,1)--(1,1);
     \draw[-{Stealth[length=3mm,width=2mm]},blue, line width=1pt] (1,1)--(1,2);
     \draw[-{Stealth[length=3mm,width=2mm]},blue, line width=1pt] (1,2)--(2,2);
      \draw[-{Stealth[length=3mm,width=2mm]},blue, line width=1pt] (2,2)--(2,3);

    \node at (-1.25,0) {$ z_1$};
    \node at (-1.25,1) {$ z_2$};
    \node at (-1.25, 2) {$z_3$};
    
  \end{tikzpicture}}
  \end{minipage}\qquad
  \begin{minipage}{0.25\linewidth}
      $(\emptyset, \emptyset)$
  \end{minipage}\]
  
 \[\begin{minipage}{0.25\linewidth}
      $\emptyset \leftarrow 3: $
  \end{minipage}  
\begin{minipage}{0.4\linewidth}
{\begin{tikzpicture}

    \draw (0,-0.75)--(0,3);
    \draw (1,-0.75)--(1,3);
    \draw (2,-0.75)--(2,3);
    \draw (3,-0.75)--(3,3);
    \draw (4, -0.75) -- (4,3);

    \draw (-1,0)--(4.5,0);
    \draw (-1,1)--(4.5,1);
    \draw (-1, 2) -- (4.5, 2);


    \draw[-{Stealth[length=3mm,width=2mm]}, Green, line width=1pt] (-1,2)--(0,2);
    \draw[-{Stealth[length=3mm,width=2mm]}, Green, line width=1pt] (0,2)--(0,3);

     \draw[-{Stealth[length=3mm,width=2mm]},red, line width=1.pt] (-1,1)--(0,1);
     \draw[-{Stealth[length=3mm,width=2mm]},red, line width=1pt] (0,1)--(0,2);
     \draw[-{Stealth[length=3mm,width=2mm]},red, line width = 1pt] (0,2)--(1,2);
    \draw[-{Stealth[length=3mm,width=2mm]},red, line width=1pt] (1,2)--(1,3);

    \draw[-{Stealth[length=3mm,width=2mm]},blue,line width=1pt] (-1,0)--(0,0);
     \draw[-{Stealth[length=3mm,width=2mm]},blue, line width=1pt] (0,0)--(0,1);
    \draw[-{Stealth[length=3mm,width=2mm]},blue, line width=1pt] (0,1)--(1,1);
     \draw[-{Stealth[length=3mm,width=2mm]},blue, line width=1pt] (1,1)--(1,2);
     \draw[-{Stealth[length=3mm,width=2mm]},blue, line width=1pt] (1,2)--(2,2);
      \draw[-{Stealth[length=3mm,width=2mm]},blue, line width=1pt] (2,2)--(3,2);
       \draw[-{Stealth[length=3mm,width=2mm]},blue, line width=1pt] (3,2)--(3,3);

    \node at (-1.25,0) {$ z_1$};
    \node at (-1.25,1) {$ z_2$};
    \node at (-1.25, 2) {$z_3$};

     \node[circle, draw = violet,fill=violet,opacity = 0.4, ultra thick, minimum size=7pt, inner sep=2pt] (c) at (2,2){};    

  \end{tikzpicture}}
  \end{minipage}\qquad
  \begin{minipage}{0.25\linewidth}
      $\left(~\ytableaushort{3} ~,~ \ytableaushort{1}~\right)$
  \end{minipage}\]
\[\begin{minipage}{0.25\linewidth}
      $(\emptyset \leftarrow 3)\leftarrow 1: $
  \end{minipage}  
\begin{minipage}{0.4\linewidth}
{\begin{tikzpicture}

    \draw (0,-0.75)--(0,3);
    \draw (1,-0.75)--(1,3);
    \draw (2,-0.75)--(2,3);
    \draw (3,-0.75)--(3,3);
    \draw (4, -0.75) -- (4,3);

    \draw (-1,0)--(4.5,0);
    \draw (-1,1)--(4.5,1);
    \draw (-1, 2) -- (4.5, 2);


    \draw[-{Stealth[length=3mm,width=2mm]}, Green, line width=1pt] (-1,2)--(0,2);
    \draw[-{Stealth[length=3mm,width=2mm]}, Green, line width=1pt] (0,2)--(0,3);
   
     \draw[-{Stealth[length=3mm,width=2mm]},red, line width=1.pt] (-1,1)--(0,1);
     \draw[-{Stealth[length=3mm,width=2mm]},red, line width=1pt] (0,1)--(0,2);
     \draw[-{Stealth[length=3mm,width=2mm]},red, line width = 1pt] (0,2)--(1,2);
    \draw[-{Stealth[length=3mm,width=2mm]},red, line width=1pt] (1,2)--(2,2);
     \draw[-{Stealth[length=3mm,width=2mm]},red, line width=1pt] (2,2)--(2,3);

    \draw[-{Stealth[length=3mm,width=2mm]},blue,line width=1pt] (-1,0)--(0,0);
     \draw[-{Stealth[length=3mm,width=2mm]},blue, line width=1pt] (0,0)--(1,0);
    \draw[-{Stealth[length=3mm,width=2mm]},blue, line width=1pt] (1,0)--(1,1);
     \draw[-{Stealth[length=3mm,width=2mm]},blue, line width=1pt] (1,1)--(2,1);
     \draw[-{Stealth[length=3mm,width=2mm]},blue, line width=1pt] (2,1)--(2,2);
      \draw[-{Stealth[length=3mm,width=2mm]},blue, line width=1pt] (2,2)--(3,2);
       \draw[-{Stealth[length=3mm,width=2mm]},blue, line width=1pt] (3,2)--(3,3);

    \node at (-1.25,0) {$ z_1$};
    \node at (-1.25,1) {$ z_2$};
    \node at (-1.25, 2) {$z_3$};

     \node[circle, draw = violet,fill=violet,opacity = 0.4, ultra thick, minimum size=7pt, inner sep=2pt] (c) at (1,2){};   
    \node[circle, draw = violet,fill=violet,opacity = 0.4, ultra thick, minimum size=7pt, inner sep=2pt] (c) at (0,0){};    
    \node[cross out, draw = violet, ultra thick, minimum size=10pt, inner sep=2pt] (c) at (1,1){};

  \end{tikzpicture}}
  \end{minipage}\qquad
    \begin{minipage}{0.25\linewidth}
      $\left(~\ytableaushort{1,3} ~,~ \ytableaushort{1,2}~\right)$
  \end{minipage}\]
\[\begin{minipage}{0.25\linewidth}
      $((\emptyset \leftarrow 3)\leftarrow 1)\leftarrow 2: $
  \end{minipage}  
 \begin{minipage}{0.4\linewidth}
{\begin{tikzpicture}

    \draw (0,-0.75)--(0,3);
    \draw (1,-0.75)--(1,3);
    \draw (2,-0.75)--(2,3);
    \draw (3,-0.75)--(3,3);
    \draw (4, -0.75) -- (4,3);

    \draw (-1,0)--(4.5,0);
    \draw (-1,1)--(4.5,1);
    \draw (-1, 2) -- (4.5, 2);


    \draw[-{Stealth[length=3mm,width=2mm]}, Green, line width=1pt] (-1,2)--(0,2);
    \draw[-{Stealth[length=3mm,width=2mm]}, Green, line width=1pt] (0,2)--(0,3);
   
     \draw[-{Stealth[length=3mm,width=2mm]},red, line width=1.pt] (-1,1)--(0,1);
     \draw[-{Stealth[length=3mm,width=2mm]},red, line width=1pt] (0,1)--(0,2);
     \draw[-{Stealth[length=3mm,width=2mm]},red, line width = 1pt] (0,2)--(1,2);
    \draw[-{Stealth[length=3mm,width=2mm]},red, line width=1pt] (1,2)--(2,2);
     \draw[-{Stealth[length=3mm,width=2mm]},red, line width=1pt] (2,2)--(2,3);

    \draw[-{Stealth[length=3mm,width=2mm]},blue,line width=1pt] (-1,0)--(0,0);
     \draw[-{Stealth[length=3mm,width=2mm]},blue, line width=1pt] (0,0)--(1,0);
    \draw[-{Stealth[length=3mm,width=2mm]},blue, line width=1pt] (1,0)--(1,1);
     \draw[-{Stealth[length=3mm,width=2mm]},blue, line width=1pt] (1,1)--(2,1);
     \draw[-{Stealth[length=3mm,width=2mm]},blue, line width=1pt] (2,1)--(3,1);
      \draw[-{Stealth[length=3mm,width=2mm]},blue, line width=1pt] (3,1)--(3,2);
       \draw[-{Stealth[length=3mm,width=2mm]},blue, line width=1pt] (3,2)--(4,2);
        \draw[-{Stealth[length=3mm,width=2mm]},blue, line width=1pt] (4,2)--(4,3);

    \node at (-1.25,0) {$ z_1$};
    \node at (-1.25,1) {$ z_2$};
    \node at (-1.25, 2) {$z_3$};

    \node[circle, draw = violet,fill=violet,opacity = 0.4, ultra thick, minimum size=7pt, inner sep=2pt] (c) at (1,2){};   
    \node[circle, draw = violet,fill=violet,opacity = 0.4, ultra thick, minimum size=7pt, inner sep=2pt] (c) at (0,0){};    
    \node[circle, draw = violet,fill=violet,opacity = 0.4, ultra thick, minimum size=7pt, inner sep=2pt] (c) at (2,1){};  
    \node[cross out, draw = violet, ultra thick, minimum size=10pt, inner sep=2pt] (c) at (1,1){};
    \node[cross out, draw = violet, ultra thick, minimum size=10pt, inner sep=2pt] (c) at (3,2){};
    
  \end{tikzpicture}}
  \end{minipage}\quad
      \begin{minipage}{0.25\linewidth}
      $\left(~\ytableaushort{1 2,3} ~,~ \ytableaushort{1 3,2}~\right)$
  \end{minipage}\]
 
\end{example}

\section{Uncrowding in the lattice model setting}
\label{sec:uncrowd-lattice-algo}

In this section we define a local procedure on decorated states that realizes Buch's uncrowding map on set-valued tableaux. 
Given $T\in \SVT^n(\lambda)$, let $S^\text{dec} \in \decoratedstate_\lambda$ be the corresponding decorated state under \eqref{eq:bij-decoratedstate-SVT}.

Recall that a bump vertex $\tt a_2$ in Figure~\ref{fig:uncolored_wts} is non-trivial when it contributes the $\beta z_i$ term in its weight from $1+\beta z_i$. 
Under the bijection with set-valued tableaux, 
non-trivial bumps encode the crowded entries of $T$, which we make precise in Lemma~\ref{lem:paths-to-SVTtab}. The lattice uncrowding 
algorithm successively removes these non-trivial bumps through local moves, in the same order that Buch's uncrowding map removes crowded entries. 
The resulting decorated state corresponds to the semistandard tableau $P_\SVT(T)$ of Definition~\ref{def:uncrowding-SVT}.

\begin{lemma}\label{lem:paths-to-SVTtab}
Let $T\in \SVT^n(\lambda)$ and let $S^\text{dec} \in \decoratedstate_\lambda$ be the corresponding decorated state. 
For each $1\leqslant i \leqslant n$, the natural path $L_i$ corresponds to row $i$ of $T$. Moreover, the non-trivial bumps along $L_i$, read from top to bottom, 
are in bijection with the crowded entries of row $i$ of $T$, read from right to left. 
\end{lemma}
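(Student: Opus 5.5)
The first claim, that $L_i$ corresponds to row $i$ of $T$, goes through verbatim as in the proof of Lemma~\ref{lem:paths-to-tab}. That argument never used that the decorated state is trivial; it only used the underlying admissible state $S$ and the GT pattern $\mathfrak{P}(S)$. So $L_i$ again follows the $i$-th $1$ from the right in each $\{0,1\}$-sequence. It therefore records the $i$-th column $\lambda^{(i)}_i,\lambda^{(i+1)}_i,\dots,\lambda^{(n)}_i$ of the left-aligned pattern, and under $\phi$ this column, together with the markings in positions $(i,j)$, builds row $i$ of $T$. The markings with first index $i$ are exactly the ones that $\phi$ uses to insert values into row $i$ in step (iii).

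For the second claim, I will first identify where a bump on $L_i$ sits. Consider an $\tt a_2$ vertex in lattice row $j$ lying on $L_i$, where the path enters from below and exits to the right. I will check from the $\{0,1\}$-encoding that this happens precisely when the $(i+1)$-st path's entry coincides with the column where $L_i$ arrives from row $j-1$. This should be the condition $\lambda^{(j)}_{i+1}<\lambda^{(j-1)}_i$ attached to position $(i,j)$ in Definition~\ref{definition.GT}. I need to be careful with the indexing shift: the marking at $(i,j)$ inserts $j$ into the end of row $i$ of $T_{j-1}$. The key step is to verify, using the non-crossing convention at $\tt b_1$ vertices, that the bump in lattice row $j$ belongs to $L_i$ and not to $L_{i+1}$. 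This bookkeeping is the main obstacle. I will settle it by a local analysis of the four edges at the vertex, using that exit arrows are labelled from right to left.

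Granting this, under $\widetilde{\mathfrak P}$ the non-trivial bumps on $L_i$ correspond to marked positions $(i,j)$, and each contributes the crowded entry $j$ to row $i$. The entry is crowded because it is added to an already existing cell, namely the rightmost cell of row $i$ in $T_{j-1}$, whose minimum is smaller. It remains to match the orders. Going from top to bottom along $L_i$ means decreasing lattice row $j$, so the crowded values $j$ are read in decreasing order. In row $i$ of a set-valued tableau, the crowded entries read from right to left also decrease, since rows weakly increase and cells are sets. Moreover, distinct $j$ give distinct crowded entries. Hence reading the bumps top to bottom gives the crowded entries right to left, which is the claimed bijection.
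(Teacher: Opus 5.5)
Your proposal follows the same route as the paper: a bump on $L_i$ in lattice row $j$ corresponds to the marked entry at position $(i,j)$ (i.e., $\lambda^{(j)}_i$), which $\phi$ turns into the crowded entry $j$ in row $i$, and the orders match because $j$ decreases from top to bottom. You supply more of the local verification than the paper's brief argument; one slip to fix is that the vertex where $L_i$ enters row $j$ from below is a bump exactly when $L_{i+1}$ turns upward strictly to the left of that column (if the columns coincide, the vertex is of type $\tt b_1$), which is the strict inequality $\lambda^{(j)}_{i+1}<\lambda^{(j-1)}_i$ you then correctly state.
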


\begin{proof} 
The first claim follows from Lemma~\ref{lem:paths-to-tab}.

For the second claim, a non-trivial bump along $L_i$ in lattice row $j$ occurs precisely when the entry $\lambda^{(j)}_{i}$ is marked in the MGT pattern. 
Thus, the marked entries in the $i$-th column of the MGT pattern, read from top to bottom, correspond under $\phi$ to the crowded entries in row $i$ of 
$T$, read from right to left.
\end{proof}


We now define the \defn{lattice uncrowding algorithm} on $S^\text{dec} \in \decoratedstate_\lambda$, a lattice version of Buch's uncrowding procedure that 
produces the trivial decorated state corresponding to $P_\SVT(T)$.

The algorithm iterates through the natural paths $L_n, \dots, L_1$ in decreasing order. For each path $L_r$, it removes the non-trivial bumps on $L_r$ 
from top to bottom through local moves. Each such move is carried out by applying the lattice insertion algorithm from the same lattice row as the chosen 
bump, starting on the adjacent path $L_{r+1}$. If this process creates additional non-trivial bumps on adjacent paths, those bumps are handled within the 
same insertion process before the algorithm returns to $L_r$.

During the algorithm, we keep track of the non-trivial bumps on each path using slide sequences. Once a bump has been processed, it is removed 
from its slide sequence. The temporary bumps created during a lattice insertion step are not added to these slide sequences.

The lattice uncrowding algorithm is as follows. 
\medskip

\begin{enumerate}[label=\textbf{Step \arabic*}, leftmargin=*]

    \item Let $r=n$. For each natural path $L_i$ starting in lattice row $1\leqslant i\leqslant n$, define a slide sequence $\Omega_i$ as the list of all 
    non-trivial $\tt a_2$ vertices traversed by $L_i$, ordered from top to bottom, equivalently by decreasing lattice rows. Thus, the first element in 
    $\Omega_i$ is the topmost bump along $L_i$. Note that $\Omega_i$ may be empty. 

    \item If $\Omega_r$ is empty, skip to Step 6.

    \item Let $B$ denote the first bump in $\Omega_r$, and suppose $B$ lies in lattice row $j$. In row $j$, locate the rightmost vertex of type $\tt c_1$ 
    on $L_{r+1}$ strictly to the left of $B$, and denote it by $V$.

     \item Starting with $k=r+1$, perform Steps 2--5 of the lattice insertion algorithm using the vertex $V_{k} = V$ from Step 3 and setting $B_{k-1} \coloneq B.$ 
     
    \item Remove the original bump $B$ chosen in Step 3 from $\Omega_r.$
    \item If $\Omega_r$ is nonempty, return to Step 3. 
    If $\Omega_r$ is empty and $r>1$, decrease $r$ by 1 and return to Step 2.
    If $\Omega_r$ is empty and $r=1$, stop. 
\end{enumerate}

Each application of Step 4 shifts one exit arrow and therefore changes the shape by adding one box. Thus, after all non-trivial bumps have been 
removed, the final state has some shape $\mu\supseteq\lambda$.

The output of the lattice uncrowding algorithm defines the \defn{lattice uncrowding map}
\[ \U_{\LAT}\colon \decoratedstate_\lambda \longrightarrow \bigsqcup_{\mu \supseteq \lambda} \decoratedstate^0_\mu,\]
where $\U_\LAT(S^\text{dec})$ is the final decorated state produced by the algorithm.

\begin{remark}
    Since $L_n$ is the northwestern-most natural path, it contains no $\tt a_2$ vertices. Hence $\Omega_n=\emptyset$, so Step 3 is never invoked when $r=n$.
\end{remark}

\begin{example} 
    Let $S^\text{dec}$ be a decorated state with one non-trivial bump (left), and $\hat{S}^{\text{dec}}$ be a trivial decorated state (right) after Step 4 of the lattice uncrowding algorithm.

\[
\begin{minipage}{0.49\linewidth}
\begin{tikzpicture} 

    \draw (0,-0.75)--(0,4);
    \draw (1,-0.75)--(1,4);
    \draw (2,-0.75)--(2,4);
    \draw (3,-0.75)--(3,4);
    \draw (4, -0.75) -- (4,4);
    \draw (5, -0.75) -- (5,4);
    \draw (6, -0.75) -- (6,4);

    \draw (-1,0)--(6.5,0);
    \draw (-1,1)--(6.5,1);
    \draw (-1, 2) -- (6.5, 2);
    \draw (-1, 3) -- (6.5, 3);
    
    \draw[-{Stealth[length=3mm,width=2mm]}, Orange, line width=1pt] (-1,3)--(0,3);
    \draw[-{Stealth[length=3mm,width=2mm]}, Orange, line width=1pt] (0,3)--(0,4);

    \draw[-{Stealth[length=3mm,width=2mm]}, Green, line width=1pt] (-1,2)--(0,2);
    \draw[-{Stealth[length=3mm,width=2mm]}, Green, line width=1pt] (0,2)--(1,2);
    \draw[-{Stealth[length=3mm,width=2mm]}, Green, line width=1pt] (1,2)--(1,3);
    \draw[-{Stealth[length=3mm,width=2mm]}, Green, line width=1pt] (1,3)--(2,3);
    \draw[-{Stealth[length=3mm,width=2mm]}, Green, line width=1pt] (2,3)--(2,4);

    \draw[-{Stealth[length=3mm,width=2mm]}, red, line width=1pt] (-1,1)--(0,1);
    \draw[-{Stealth[length=3mm,width=2mm]}, red, line width=1pt] (0,1)--(1,1);
    \draw[dotted, -{Stealth[length=3mm,width=2mm]}, red, line width=2.5pt] (1,1)--(1,2);
    \draw[dotted, -{Stealth[length=3mm,width=2mm]}, red, line width=2.5pt] (1,2)--(2,2);
    \draw[-{Stealth[length=3mm,width=2mm]}, red, line width=1pt] (2,2)--(3,2);
    \draw[-{Stealth[length=3mm,width=2mm]}, red, line width=1pt] (3,2)--(3,3);
    \draw[-{Stealth[length=3mm,width=2mm]}, red, line width=1pt] (3,3)--(4,3);
    \draw[-{Stealth[length=3mm,width=2mm]}, red, line width=1pt] (4,3)--(5,3);
    \draw[-{Stealth[length=3mm,width=2mm]}, red, line width=1pt] (5,3)--(5,4);

    \draw[-{Stealth[length=3mm,width=2mm]}, blue, line width=1pt] (-1,0)--(0,0);
    \draw[-{Stealth[length=3mm,width=2mm]}, blue, line width=1pt] (0,0)--(1,0);
    \draw[-{Stealth[length=3mm,width=2mm]}, blue, line width=1pt] (1,0)--(2,0);
    \draw[-{Stealth[length=3mm,width=2mm]}, blue, line width=1pt] (2,0)--(3,0);
    \draw[-{Stealth[length=3mm,width=2mm]}, blue, line width=1pt] (3,0)--(3,1);
    \draw[-{Stealth[length=3mm,width=2mm]}, blue, line width=1pt] (3,1)--(4,1);
    \draw[-{Stealth[length=3mm,width=2mm]}, blue, line width=1pt] (4,1)--(4,2);
    \draw[-{Stealth[length=3mm,width=2mm]}, blue, line width=1pt] (4,2)--(5,2);
    \draw[-{Stealth[length=3mm,width=2mm]}, blue, line width=1pt] (5,2)--(5,3);
    \draw[-{Stealth[length=3mm,width=2mm]}, blue, line width=1pt] (5,3)--(6,3);
    \draw[-{Stealth[length=3mm,width=2mm]}, blue, line width=1pt] (6,3)--(6,4);

    \node at (-1.25,0) {$ z_1$};
    \node at (-1.25,1) {$ z_2$};
    \node at (-1.25, 2) {$z_3$};
    \node at (-1.25, 3) {$z_4$};

    \node[circle, draw=violet, fill=violet, opacity=0.4, ultra thick, minimum size=7pt, inner sep=2pt] at (0,0){};
    \node[circle, draw=violet, fill=violet, opacity=0.4, ultra thick, minimum size=7pt, inner sep=2pt] at (1,0){};
    \node[circle, draw=violet, fill=violet, opacity=0.4, ultra thick, minimum size=7pt, inner sep=2pt] at (2,0){};
    \node[circle, draw=violet, fill=violet, opacity=0.4, ultra thick, minimum size=7pt, inner sep=2pt] at (0,1){};
    \node[circle, draw=violet, fill=violet, opacity=0.4, ultra thick, minimum size=7pt, inner sep=2pt] at (2,2){};
    \node[circle, draw=violet, fill=violet, opacity=0.4, ultra thick, minimum size=7pt, inner sep=2pt] at (4,3){};
    \node[circle, draw=violet, fill=violet, opacity=0.4, ultra thick, minimum size=7pt, inner sep=2pt] at (0,2){}; 

    \node[cross out, draw=violet, ultra thick, minimum size=10pt, inner sep=2pt] at (3,3){};
    \node[cross out, draw=violet, ultra thick, minimum size=10pt, inner sep=2pt] at (4,2){};
    \node[cross out, draw=violet, ultra thick, minimum size=10pt, inner sep=2pt] at (1,3){};

    \node[circle, draw=violet, thick, minimum size=10pt] at (1,1){};
    \node at (1.85,0.6) {$V= V_2$};

    \node[circle, draw = violet, ultra thick, minimum size=10pt, inner sep=2pt] at (3,1){};
    \node at (3.9,0.6) {$B=B_1$};

    \node at (1.45,2.35) {$\gamma_2$};
  \end{tikzpicture}
  \end{minipage} \qquad
  \begin{minipage}{0.49\linewidth}
  \begin{tikzpicture}

    \draw (0,-0.75)--(0,4);
    \draw (1,-0.75)--(1,4);
    \draw (2,-0.75)--(2,4);
    \draw (3,-0.75)--(3,4);
    \draw (4, -0.75) -- (4,4);
    \draw (5, -0.75) -- (5,4);
    \draw (6, -0.75) -- (6,4);

    \draw (-1,0)--(6.5,0);
    \draw (-1,1)--(6.5,1);
    \draw (-1, 2) -- (6.5, 2);
    \draw (-1, 3) -- (6.5, 3);
    
    \draw[-{Stealth[length=3mm,width=2mm]}, Orange, line width=1pt] (-1,3)--(0,3);
    \draw[-{Stealth[length=3mm,width=2mm]}, Orange, line width=1pt] (0,3)--(0,4);

    \draw[-{Stealth[length=3mm,width=2mm]}, Green, line width=1pt] (-1,2)--(0,2);
    \draw[-{Stealth[length=3mm,width=2mm]}, Green, line width=1pt] (0,2)--(1,2);
    \draw[-{Stealth[length=3mm,width=2mm]}, Green, line width=1pt] (1,2)--(2,2);
    \draw[-{Stealth[length=3mm,width=2mm]}, Green, line width=1pt] (2,2)--(2,3);
    \draw[-{Stealth[length=3mm,width=2mm]}, Green, line width=1pt] (2,3)--(3,3);
    \draw[-{Stealth[length=3mm,width=2mm]}, Green, line width=1pt] (3,3)--(3,4);

   \draw[-{Stealth[length=3mm,width=2mm]}, red, line width=1pt] (-1,1)--(0,1);
    \draw[-{Stealth[length=3mm,width=2mm]}, red, line width=1pt] (0,1)--(1,1);
    \draw[-{Stealth[length=3mm,width=2mm]}, red, line width=1pt] (1,1)--(2,1);
    \draw[-{Stealth[length=3mm,width=2mm]}, red, line width=1pt] (2,1)--(2,2);
    \draw[-{Stealth[length=3mm,width=2mm]}, red, line width=1pt] (2,2)--(3,2);
    \draw[-{Stealth[length=3mm,width=2mm]}, red, line width=1pt] (3,2)--(3,3);
    \draw[-{Stealth[length=3mm,width=2mm]}, red, line width=1pt] (3,3)--(4,3);
    \draw[-{Stealth[length=3mm,width=2mm]}, red, line width=1pt] (4,3)--(5,3);
    \draw[-{Stealth[length=3mm,width=2mm]}, red, line width=1pt] (5,3)--(5,4);

    \draw[-{Stealth[length=3mm,width=2mm]}, blue, line width=1pt] (-1,0)--(0,0);
    \draw[-{Stealth[length=3mm,width=2mm]}, blue, line width=1pt] (0,0)--(1,0);
    \draw[-{Stealth[length=3mm,width=2mm]}, blue, line width=1pt] (1,0)--(2,0);
    \draw[-{Stealth[length=3mm,width=2mm]}, blue, line width=1pt] (2,0)--(3,0);
    \draw[-{Stealth[length=3mm,width=2mm]}, blue, line width=1pt] (3,0)--(3,1);
    \draw[-{Stealth[length=3mm,width=2mm]}, blue, line width=1pt] (3,1)--(4,1);
    \draw[-{Stealth[length=3mm,width=2mm]}, blue, line width=1pt] (4,1)--(4,2);
    \draw[-{Stealth[length=3mm,width=2mm]}, blue, line width=1pt] (4,2)--(5,2);
    \draw[-{Stealth[length=3mm,width=2mm]}, blue, line width=1pt] (5,2)--(5,3);
    \draw[-{Stealth[length=3mm,width=2mm]}, blue, line width=1pt] (5,3)--(6,3);
    \draw[-{Stealth[length=3mm,width=2mm]}, blue, line width=1pt] (6,3)--(6,4);

    \node at (-1.25,0) {$ z_1$};
    \node at (-1.25,1) {$ z_2$};
    \node at (-1.25, 2) {$z_3$};
    \node at (-1.25, 3) {$z_4$};

    \node[circle, draw=violet, fill=violet, opacity=0.4, ultra thick, minimum size=7pt, inner sep=2pt] at (0,0){};
    \node[circle, draw=violet, fill=violet, opacity=0.4, ultra thick, minimum size=7pt, inner sep=2pt] at (1,0){};
    \node[circle, draw=violet, fill=violet, opacity=0.4, ultra thick, minimum size=7pt, inner sep=2pt] at (2,0){};
    \node[circle, draw=violet, fill=violet, opacity=0.4, ultra thick, minimum size=7pt, inner sep=2pt] at (0,1){};
    \node[circle, draw=violet, fill=violet, opacity=0.4, ultra thick, minimum size=7pt, inner sep=2pt] at (1,1){};
    \node[circle, draw=violet, fill=violet, opacity=0.4, ultra thick, minimum size=7pt, inner sep=2pt] at (4,3){};
    \node[circle, draw=violet, fill=violet, opacity=0.4, ultra thick, minimum size=7pt, inner sep=2pt] at (0,2){};
    \node[circle, draw=violet, fill=violet, opacity=0.4, ultra thick, minimum size=7pt, inner sep=2pt] at (1,2){};
    
    \node[cross out, draw=violet, ultra thick, minimum size=10pt, inner sep=2pt] at (4,2){};
    \node[cross out, draw=violet, ultra thick, minimum size=10pt, inner sep=2pt] at (3,1){};
    \node[cross out, draw=violet, ultra thick, minimum size=10pt, inner sep=2pt] at (2,3){};

  \end{tikzpicture}
  \end{minipage}\]

 Let $\psi(S^\text{dec})=T$ be the set-valued tableau corresponding to the decorated state on the left, and let $\psi(\hat{S}^{\text{dec}})=\hat{T}$ be the semistandard tableau obtained after applying the uncrowding operation on $T$. 

    \[T= \ytableausetup{boxsize=2.2em}
      \ytableaushort{1 1 {1,2}, 2 3 4, 3 } \qquad \qquad \hat{T}=\ytableausetup{boxsize=2.2em}
      \ytableaushort{1 1 1, 2 2 4, 3 3}\]

Note that $T$ and $\hat{T}$ are equal to $T_1$ and $T_3$ from Example~\ref{ex:lattice-insertion}.
\end{example}

\begin{lemma}
    Step $4$ of the lattice uncrowding algorithm is well-defined, that is, there always exists a path $\gamma$ starting from the north edge of $V$ 
    and ending at either a vertex of type $\tt b_2$ or an exit arrow $E$. In the latter case, no exit arrow lies immediately to the right of $E$.
\end{lemma}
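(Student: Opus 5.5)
The plan is to translate the statement into the tableau language via the bijection $\psi$ and Lemma~\ref{lem:paths-to-SVTtab}, and then argue locally in the lattice. Invariant: at every stage of the lattice uncrowding algorithm the current decorated state corresponds to a set-valued tableau obtained from $T$ by a sequence of uncrowding operations, possibly with one pending crowded entry. By Definition~\ref{def:uncrowding-SVT}, the bump $B$ in lattice row $j$ on $L_r$ records a crowded entry $x=j$ in row $r$ of the current tableau. Since the algorithm processes the bottommost row with a multicell first (paths $L_n,\dots,L_1$ in decreasing order, bumps on $L_r$ top to bottom, i.e.\ largest crowded entry first), the rows below $r$ are ordinary semistandard rows. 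Step 4 then corresponds to row-inserting $x$ into row $r+1$.

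\textbf{Existence of $V$.} Because $B$ is an $\tt a_2$ vertex on $L_r$, path $L_{r+1}$ lies weakly northwest of $L_r$ and has already reached lattice row $j$ or above. I will show $L_{r+1}$ passes through lattice row $j$ to the left of $B$, and that some vertex of row $j$ strictly left of $B$ is of type $\tt c_1$ on $L_{r+1}$. This uses column strictness in $\psi(S^{\text{dec}})$: entries of row $r+1$ are larger than $\min$ of the cell above. Hence $L_{r+1}$ enters lattice row $j$ or higher before the column of $B$, and its last vertex in row $j$ turns upward, giving a $\tt c_1$ vertex. If $L_{r+1}$ starts above row $j$ (i.e.\ $r+1>j$), I would use that crowded entries satisfy $x\geq r+1$ by the flag condition, or handle the left boundary edge as a degenerate $\tt c_1$. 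This case needs checking.

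\textbf{Existence and termination of $\gamma$.} Starting at the north edge of $V$, follow $L_{r+1}$. It is a finite monotone (up/right) directed path ending at an exit arrow on the top boundary. Hence it either meets a $\tt b_2$ vertex first or reaches its exit arrow $E$, so $\gamma$ exists.

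\textbf{Main obstacle: no exit arrow immediately right of $E$.} If $\gamma$ reaches $E$ without meeting a $\tt b_2$ vertex, then in tableau terms row $r+1$ has no entry larger than $x$ beyond the inserted position, and the shift appends a box to row $r+1$. The exit arrow to the right of $E$ belongs to $L_r$, and it is adjacent exactly when $\lambda_r=\lambda_{r+1}$ in the current shape. I will argue that this cannot occur when $\gamma$ ends at $E$. The bump $B$ on $L_r$ occupies column $c$, and $L_r$ leaves row $j$ rightward from $B$. Since $\gamma$ lies weakly left of $L_r$ and has no $\tt b_2$ vertices, it goes straight up from $V$'s column, which is strictly left of $B$'s column. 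Meanwhile $L_r$ must still make at least one further rightward step after $B$, namely the $\tt b_2$ configuration of the $\tt a_2$'s east edge. So the exit column of $L_r$ is at least two more than that of $\gamma$. Equivalently, in the tableau, the crowded cell of row $r$ containing $x$ lies strictly right of the end of row $r+1$, so $\lambda_r>\lambda_{r+1}$.

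Making this column count rigorous, especially tracking weakly-northwest non-crossing at $\tt b_1$ vertices, is the delicate part. Then the same argument is iterated for the $L_{k}$, $k>r+1$, encountered in later rounds of the insertion, using that newly created $B_k$ play the role of $B$.
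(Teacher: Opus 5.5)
Your lattice column count for the main claim has a genuine gap, and it comes from a false picture of $\gamma$. In this 5-vertex model no vertex lets a path pass straight through vertically. A path entering a vertex from below always leaves it to the right: at an $\tt a_2$ vertex, or at a $\tt b_1$ vertex by the natural-path convention. So if $\gamma$ meets no $\tt b_2$ vertex, it does \emph{not} go straight up from the column $p$ of $V$. It is forced into a staircase (up, right, up, right, \dots, up) with exactly $n-j$ rightward steps, so $E$ lies in column $p+n-j$. Your lower bound for the exit column $t$ of $L_r$, namely $t\geq q+1$ from the east edge of $B$ (column $q$), is then useless once $j<n$, since $q+1$ need not exceed $p+n-j$. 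Both of your estimates are off by the same amount $n-j$, which is why the heuristic lands on a gap of $2$. As written, however, the inequality is not justified, and you leave this count open yourself.

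The missing ingredient is the same staircase observation applied to $L_r$, which is the paper's proof. From $B$, the path $L_r$ must still make $n-j+1$ vertical steps (out of lattice rows $j,\dots,n$, the last being its exit arrow), and each is preceded by at least one rightward step, the first being the east edge of $B$. Hence $t\geq q+n-j+1$ and $t-(p+n-j)\geq q-p+1\geq 2$. No delicate bookkeeping at $\tt b_1$ vertices is needed, and the same count applies to each later $B_k$.

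Alternatively, your closing tableau sentence can become a genuinely different complete proof, but only if you justify it directly rather than calling it equivalent to the flawed count. If inserting $x$ into row $r+1$ appends, then every entry of row $r+1$ is at most $x$. By column strictness, no cell of row $r+1$ then lies below the cell (column $c$) of $x$ in row $r$, giving $\lambda_{r+1}<c\leq\lambda_r$. This says exactly that the exit arrows of $L_{r+1}$ and $L_r$ are not adjacent. This route relies on Lemma~\ref{lem:paths-to-SVTtab} and the row-insertion dictionary instead of path geometry.

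Finally, your worry about $r+1>j$ cannot arise: $L_r$ enters $B$ from below, so $j\geq r+1$. The flag condition, which concerns flagged increasing tableaux, plays no role here.
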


\begin{proof}
    First, recall that we enumerate columns in increasing order from left to right while rows are ordered from bottom to top. Let the exit arrow of $L_r$ lie in column $t$. Since $V$ is defined to be the closest vertex to the left of $B,$ the exit arrow $E$ reached by the path $\gamma$ must lie on the natural path $L_{r+1}.$ Therefore if there is an exit arrow directly to the right of $E$, it must be the exit arrow corresponding to $L_r$. We claim that if $\gamma$ never reaches a vertex of type $\tt b_2,$ then $E$ and the exit arrow for $L_r$ in column $t$ 
    cannot lie in consecutive columns.
   
   Suppose $L_r$ has a bump $B$ in row $j$ column $q$, and let $V$ be the vertex located in row $j$ column $p$. By construction, 
   $q-p \geqslant 1$. Note that $L_r$ must take $(n-j+1)$ vertical steps to reach its exit arrow (including the exit arrow itself), and the admissibility conditions 
   require that each vertical step be preceded by at least one horizontal step to the right. Then, the number of horizontal steps along $L_r$ between $B$ and the exit arrow of $L_r$ must be at least $(n-j+1)$ (including the east edge of $B$).  Therefore, the exit arrow of $L_r$ must lie in a column that is weakly right of column $q+n-j+1,$ that is, $t \geqslant q+n-j+1$.
   
   Now assume that $\gamma$ never encounters a vertex of type $\tt b_2.$ Then, $\gamma$ must have exactly $(n-j)$ right arrows before reaching its exit arrow. Hence, $E$ 
   must lie in column $p +(n-j).$ Examining the difference between column $t$ and $p+n-j,$ we see that  
    $$t - (p+n-j) \geqslant (q+n-j+1) -(p+n-j) = q-p + 1 > 1.$$ That is, $E$ and the exit arrow for $L_r$ cannot lie in consecutive columns.
\end{proof}

\begin{theorem}
\label{thm:lattice_uncrowding_algorithm} 
Let $S^\text{dec} \in \decoratedstate_\lambda$ and let $T=\psi(S^\text{dec})\in \SVT^n(\lambda)$ be the corresponding set-valued tableau. Then the lattice uncrowding 
algorithm on $S^\text{dec}$ realizes Buch's uncrowding map on $T$ under the bijection $\psi.$ In particular, the following diagram
\[
\begin{tikzcd}[column sep=large,row sep=large]
\decoratedstate_\lambda
\arrow[r,"\U_\LAT"]
\arrow[d,"\psi"']
&
\displaystyle{\bigsqcup_{\mu\supseteq\lambda} \decoratedstate^0_\mu}
\arrow[d,"\psi"]
\\
\SVT^n(\lambda)
\arrow[r,"P_\SVT"']
&
\displaystyle{\bigsqcup_{\mu\supseteq\lambda} \SSYT^n(\mu)}
\end{tikzcd}
\]
commutes. Equivalently, \[\psi(\U_\LAT(S^\text{dec}))=P_\SVT (\psi(S^\text{dec})).\]
\end{theorem}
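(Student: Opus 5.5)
We will prove the commutativity by induction on the number of uncrowding operations, showing that each execution of Step 4 of the lattice uncrowding algorithm corresponds under $\psi$ to exactly one uncrowding operation of Definition~\ref{def:uncrowding-SVT}. The argument has two parts: the \emph{order} in which bumps are processed, and the \emph{effect} of each processing step.

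\textbf{Order of processing.} By Lemma~\ref{lem:paths-to-SVTtab}, $L_r$ corresponds to row $r$ of $T$. Its non-trivial bumps, read top to bottom, correspond to the crowded entries of row $r$, read right to left. Within a row of a set-valued tableau, the crowded entries increase from left to right. Hence the first element of $\Omega_r$ is the largest crowded entry of row $r$. Since the algorithm scans $r=n,n-1,\dots,1$, it always treats the bottommost row containing a multicell, and within that row the largest crowded entry $x$. This is exactly Buch's choice.

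We must also check that an uncrowding step never creates new crowded entries in rows $r'$ with $r'>r$ that survive the step. We verify this inductively below. We also check that the processing does not disturb the remaining elements of $\Omega_r$: the vertices of $\Omega_r$ are untouched because only segments of $L_{r+1},L_{r+2},\dots$ are shifted.

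\textbf{Effect of one step.} Let $B\in\Omega_r$ lie in lattice row $j$, so $x=j$. Removing $x$ from its cell and row-inserting it into row $r+1$ is the insertion $R_{r+1}\leftarrow j$ followed by the usual bumping cascade. We will show that Step 4 realizes this cascade by rerunning the argument of Proposition~\ref{prop:lattice_INSERTION_algorithm}, starting on $L_{r+1}$ instead of $L_1$.

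\begin{enumerate}[label=(\roman*)]
\item The vertex $V$ is the rightmost $\tt c_1$ on $L_{r+1}$ in row $j$ left of $B$. The segment $\gamma$ from $V$ runs to the first $\tt b_2$ of $L_{r+1}$ in a row above $j$, which is the smallest entry of row $r+1$ exceeding $j$. The restriction ``left of $B$'' ensures we act on $L_{r+1}$ rather than $L_r$.
\item Shifting $\gamma$ creates a $\tt b_2$ in row $j$ on $L_{r+1}$, which adds $j$ to row $r+1$. The vertex $B$, playing the role of $B_{k-1}$, is then either converted to $\tt b_1$ or declared trivial. In both cases $x$ is deleted from row $r$. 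For this we must check that the shifted configuration near $B$ is admissible and that the entries of $L_r$ are otherwise unchanged.
\item If $\gamma$ terminates at a $\tt b_2$, a temporary non-trivial bump $B_{r+1}$ records the bumped entry. The cascade continues on $L_{r+2},\dots$ exactly as in Proposition~\ref{prop:lattice_INSERTION_algorithm}, and every temporary bump is resolved in the next iteration. Consequently, at the end of the step, no non-trivial bumps remain on $L_{r'}$ for $r'>r$. This is consistent with the fact that Buch's step yields a tableau whose rows below $r$ are semistandard.
\item If $\gamma$ ends at an exit arrow, the preceding lemma guarantees that the shifted exit arrow does not collide with the exit arrow of $L_r$. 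The state therefore remains admissible with shape enlarged by one box, matching the new box created by Buch's step.
\end{enumerate}

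After all $\Omega_r$ are exhausted there are no non-trivial bumps left, so the output lies in $\decoratedstate^0_\mu$. Iterating the correspondence through all steps gives $\psi(\U_\LAT(S^\text{dec}))=P_\SVT(T)$.

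\textbf{Main obstacle.} The delicate point is item (ii). Proposition~\ref{prop:lattice_INSERTION_algorithm} was proved only for trivial decorated states, where $B_0$ is empty. Here the cascade begins at an existing non-trivial bump $B$ on $L_r$, while the rows above it may still contain other non-trivial bumps; in tableau terms, row $r$ may contain other multicells. We must show that shifting $\gamma$ on $L_{r+1}$ interacts only with $B$ and not with these other bumps.

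The plan is a local case analysis at $B$. The segment $\gamma$ enters $B$'s column from below, or passes along its north edge, and we check that the configuration at $B$ becomes either $\tt b_1$ (the shifted $L_{r+1}$ now meets $L_r$ there) or a genuine bump, with no other vertex of $L_r$ changing type. Along the same lines, we check that vertices of $L_{r+2},\dots$ carrying original non-trivial bumps are not among those shifted. This is impossible, since by the order of processing those paths have no remaining non-trivial bumps at this stage.
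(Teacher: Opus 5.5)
Your route is the paper's: the slide-sequence order reproduces Buch's choice of row and entry, and each pass through Step~4 is the cascade of Proposition~\ref{prop:lattice_INSERTION_algorithm} started on $L_{r+1}$. The paper argues this only for a tableau with a single excess entry and then asserts the recursion. You are therefore right that the proposition has to be extended beyond trivial decorated states.

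\textbf{The invariant you plan to check is false.} At exactly that delicate point you propose to verify that ``no other vertex of $L_r$ changes type'', and this fails. Take $n=4$ and $T$ of shape $(1,1)$ whose first row is the cell $\{1,2\}$ and whose second row is the cell $\{4\}$.
\begin{itemize}
\item $L_1$ has the non-trivial bump $B$ in lattice row $2$, and also a trivial bump in lattice row $3$.
\item $\gamma$ runs from the $\tt c_1$ vertex of $L_2$ in row $2$ to the $\tt b_2$ vertex of $L_2$ in row $4$.
\item After the shift, $L_2$ turns up in lattice row $3$ exactly where $L_1$ enters from below. So that trivial bump of $L_1$ becomes a $\tt b_1$ vertex.
\end{itemize}
In general this happens in every lattice row $t>j$ in which $\gamma$ contains the up-step of $L_{r+1}$ and $\lambda^{(t-1)}_r=\lambda^{(j)}_{r+1}+1$. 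For the same reason, your justification that the rest of $\Omega_r$ is untouched ``because only segments of $L_{r+1},L_{r+2},\dots$ are shifted'' is not valid. Shifting $L_{r+1}$ does change vertex types on $L_r$, as $B$ itself shows.

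\textbf{What to prove instead.} The correct invariant is weaker.
\begin{itemize}
\item All changes occur in lattice rows $\geqslant j$. The vertical arrows of $L_r$ never move, so every $\lambda^{(s)}_r$ is unchanged.
\item The remaining elements of $\Omega_r$ lie strictly below row $j$, so they really are untouched.
\item Every bump of $L_r$ above row $j$ is trivial at this stage, because $B$ is the topmost remaining element of $\Omega_r$. Its turning into $\tt b_1$ therefore does not alter the marked GT pattern.
\end{itemize}
You also need admissibility along the whole shifted segment, not only near $B$. For every row $t\geqslant j$ in which $\gamma$ contains the up-step of $L_{r+1}$,
\[
\lambda^{(t)}_{r+1}=\lambda^{(j)}_{r+1}<\lambda^{(j-1)}_r\leqslant\lambda^{(t-1)}_r .
\]
The equality holds because row $r+1$ has no entries strictly between $j$ and the terminal row. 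The strict inequality is exactly the bump condition at $B$. Hence the shifted up-step of $L_{r+1}$ never passes the column where $L_r$ enters row $t$; equality after the shift is precisely the $\tt b_1$ case. The same argument, with $B_{k-1}$ in place of $B$, handles each later iteration of the cascade.

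With these facts, $\psi$ of the new state is $T$ with $x$ removed from row $r$ and row-inserted into row $r+1$, and your induction goes through.
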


\begin{proof}
    Let $S^\text{dec} \in \decoratedstate_\lambda$ and let $T=\psi(S^\text{dec})$ be the corresponding set-valued tableau. Define the slide sequences as in Step 1 of 
    the lattice uncrowding algorithm. We will prove the statement by showing that the lattice uncrowding algorithm is equivalent to the uncrowding 
    algorithm on set-valued tableaux. For $1\leqslant i \leqslant n$, $\Omega_i$ is indexed by the natural path $L_i$, which by Lemma~\ref{lem:paths-to-SVTtab} 
    corresponds to row $i$ of $T$. Recall that uncrowding on set-valued tableaux finds the bottommost row containing a multicell and uncrowds 
    the largest crowded entry in that row. Since the bumps in each slide sequence are ordered from top to bottom, the slide sequences are processed 
    in the same order as crowded entries under the uncrowding operation on set-valued tableaux.
    Therefore, it suffices to show that one iteration of Steps 3--5 realizes one step of Buch's uncrowding map. This is because after one non-trivial 
    bump is removed, the algorithm repeats for any newly created bump, so the same local argument applies recursively to the remaining crowded entries.
    
    Suppose that $T$ has exactly one excess entry, say $j$, lying in row $r$ of $T$. Then the natural path $L_r$ will contain exactly one $\tt a_2$ 
    vertex in lattice row $j$ corresponding to the non-trivial bump $B$ in the slide sequence $\Omega_{r}$. Let $V$ be the rightmost vertex of type $\tt c_1$ 
    on $L_{r+1}$ as defined in Step $3.$ Now recall that uncrowding $T$ is equivalent to removing $j$ from its original cell and row inserting $j$ into 
    row $r+1.$ The next $\tt b_2$ vertex after $V$ along $L_{r+1}$ (if it exists) corresponds to the smallest entry greater than $j$ in row $r+1$ of $T.$  
    Then by Proposition~\ref{prop:lattice_INSERTION_algorithm}, performing the lattice insertion algorithm during Step 4 for $V_{r+1} \coloneq V$ is equivalent to row inserting $j$ into row $r+1.$
 
     During Step 2 of the lattice insertion algorithm, if the bump $B_{k-1}\coloneq B$ is not eliminated, then it is declared trivial. So, the non-trivial contribution from original excess entry $j$ is eliminated. This results in a decorated state corresponding exactly to the tableau obtained after one application of the uncrowding algorithm.  
    
    Repeating this procedure over all slide sequences, and continuing until no new bumps are created produces exactly the same sequence of insertions as the uncrowding map $\U_\SVT(T)$. Hence, under the bijection \eqref{eq:bij-decoratedstate-SVT}, the state $\U_\LAT(S^\text{dec})$ corresponds to $P_\SVT(T)$. Therefore, $\psi(\U_\LAT(S^\text{dec}))=P_\SVT(T).$ 
\end{proof}

Theorem~\ref{thm:lattice_uncrowding_algorithm} shows that the lattice uncrowding algorithm recovers the semistandard tableau $P_\SVT(T)$. Next we describe how the same process records the flagged increasing tableau $F_\SVT(T).$

\begin{proposition} \label{prop:lattice_uncrowding_recording}
    Let $T\in \SVT^n(\lambda)$. Then the flagged increasing tableau $F_\SVT(T)$ can be reconstructed from the lattice uncrowding algorithm using the sequence of exit-arrow moves together with the natural path on which each uncrowding step begins.
\end{proposition}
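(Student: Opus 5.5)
We will build $F_\SVT(T)$ step by step alongside the lattice uncrowding algorithm, matching the recursive definition in Definition~\ref{def:uncrowding-SVT}. The key input is Theorem~\ref{thm:lattice_uncrowding_algorithm}: its proof shows more than equality of final outputs. It shows that each pass through Steps 3--5 (removing one bump $B$ from $\Omega_r$) corresponds to exactly one uncrowding operation on the intermediate tableaux. These operations occur in the same order. So if $S^\text{dec}_0=S^\text{dec},S^\text{dec}_1,\dots,S^\text{dec}_{\mathsf{ex}(T)}$ are the decorated states after successive passes, then $\psi(S^\text{dec}_i)=U_i$ for every $i$. It therefore suffices to read off, from the lattice, two pieces of data for each pass: the row $r$ where the uncrowding step begins, and the row $r'$ of the new cell $C$.

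The starting row is immediate. When the algorithm processes a bump $B\in\Omega_r$, Lemma~\ref{lem:paths-to-SVTtab} says $B$ records a crowded entry of row $r$ of the current tableau. Moreover, the slide-sequence ordering matches Buch's rule of choosing the bottommost row and the largest crowded entry. Hence $r$, the index of the natural path on which Step 3 takes place, is exactly the row from which Buch's operation removes $x$.

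For $r'$, we use the fact that Step 4 runs the lattice insertion algorithm starting on $L_{r+1}$. The insertion stops at the first $k$ for which $\gamma_k$ reaches an exit arrow. Only that final shift moves an exit arrow, namely the $k$-th exit arrow, which moves one column to the right. By the proof of Proposition~\ref{prop:lattice_INSERTION_algorithm}, this corresponds to adding a box at the end of row $k$. By Lemma~\ref{lem:paths-to-tab}, the moved exit arrow is the endpoint of $L_k$. Therefore $r'=k$, the index of the natural path whose exit arrow moved. We also need the position of $C$: the $k$-th exit arrow, counted from the right, sits in column $\mu_k+(n-k+1)$. Moving it one column right increases $\mu_k$ by one. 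So $C$ is the cell $(k,\mu_k+1)$ of the new shape, and it is determined by the exit-arrow move alone. The well-definedness lemma for Step 4 guarantees that the moved arrow does not collide with the next exit arrow, so the new shape is again a partition.

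We then define the lattice recording tableau recursively. We start with the empty filling of $\lambda/\lambda$, and at pass $i$ we place $r'-r$ in cell $C$, where $r'$ is the index of the moved exit arrow and $r$ is the path index of $\Omega_r$. By induction on $i$, using $\psi(S^\text{dec}_i)=U_i$ and the identification of $(r,r',C)$ above, this filling agrees with $F_i$ at every stage, and in particular $F_{\mathsf{ex}(T)}=F_\SVT(T)$.

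The main obstacle is making sure nested insertions are handled correctly. Temporary bumps $B_k$ created inside a single Step 4 call must not be counted as separate uncrowding steps. Only the single exit-arrow move at the end of that call counts. This matches Buch's operation, in which one removed entry triggers one full bumping chain and adds one box. The argument needs to point out that temporary bumps are never added to the slide sequences, and that every Step 4 call terminates after exactly one exit-arrow move.
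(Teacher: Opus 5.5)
Your proposal is correct and follows essentially the same route as the paper. Both arguments use Theorem~\ref{thm:lattice_uncrowding_algorithm} to identify each pass through Steps 3--5 with one uncrowding operation, so that $\psi(S^{\text{dec}}_i)=U_i$. Both then take $r$ to be the index of the path carrying the initial bump and the new row to be the index of the natural path whose exit arrow moved, and place the difference in the new cell. Your additions (locating $C$ from the column of the moved exit arrow, and noting that temporary bumps never enter the slide sequences, so each Step 4 call makes exactly one exit-arrow move) spell out details the paper leaves implicit. One small correction: when the bumping chain ends below $L_{r+1}$, the non-collision of exit arrows follows from ordinary row insertion into the semistandard rows below row $r$, not from the well-definedness lemma.
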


\begin{proof}
    Let $F_i$ be defined as in \eqref{eq:uncrowding_SVT_to_pair}. Let $S_0^{\text{dec}},S_1^{\text{dec}}, S_2^{\text{dec}}, \ldots , S_{\mathsf{ex}(T)}^{\text{dec}}$ be the sequence of decorated states 
    obtained during the uncrowding procedure where $S_0^{\text{dec}}$ is the decorated state corresponding to $T$, and $S_i^{\text{dec}}$ is the decorated state after the $i$-th exit arrow has 
    been moved. For $0\leqslant i \leqslant \mathsf{ex}(T)$, let $U_i$ denote the tableau corresponding to $S_i^{\text{dec}}$. By Theorem~\ref{thm:lattice_uncrowding_algorithm}, each step from $S_{i}^{\text{dec}}$ to $S_{i+1}^{\text{dec}}$ is exactly one application of the uncrowding operation, so $U_{i+1}$ is obtained from $U_{i}$ by adding a single box $C$ in the location $U_{i+1}/U_{i}$.

    Consider the step from $S_{i}^{\text{dec}}$ to $S_{i+1}^{\text{dec}}.$ Suppose the initial non-trivial bump considered during this step lies on the natural path $L_r$, and suppose the shifted exit arrow corresponds to the natural path $L_s$. Then the corresponding crowded entry in $U_i$ starts in row $r$ and after applying the uncrowding operation creates a new box $C$ in row $s.$ Hence, the entry added to $C$ in $F_{i+1}$ is $s-r$. Therefore, each uncrowding step determines one entry of $F_\SVT(T)$ from the pair $(r,s)$ which consists of the initial bump path $L_r$ and the shifted exit arrow for $L_s$.
\end{proof}

Theorem~\ref{thm:lattice_uncrowding_algorithm} and Proposition~\ref{prop:lattice_uncrowding_recording} together show that the lattice uncrowding algorithm recovers the full image of Buch's uncrowding map at the level of decorated states in the lattice model.

In order to obtain the Schur expansion using lattice models, one would need to uncrowd every decorated state in $\decoratedstate_\lambda.$ Theorem~\ref{thm:yamanouchi-expand} allows us to shorten this process by examining only the decorated states that encode Yamanouchi set-valued tableaux. 

\begin{definition}\label{def:lattice-reading-word}
    Let $S^\text{dec} \in \decoratedstate_\lambda$ be a decorated state with $n$ rows. Let $L_i$ be the $i$-th natural path of $S^\text{dec}.$ Define the reading word of $L_i, \rd(L_i)$, as follows. First group consecutive contributing bumps with the first $\tt b_2$ vertex to the left of these bumps. All other $\tt b_2$ vertices each form individual groups. Reading these groups left to right, list the weights of each group in decreasing order. The \defn{vertex reading word} of $S^\text{dec}$ is then defined as 
    \[\rd(S^\text{dec}) = \rd(L_n)\rd(L_{n-1}) \dotsm \rd(L_1).\]
    In correspondence with the tableau, we may only read the subscripts of the weights. 
\end{definition}

\begin{lemma}
    For $S^\text{dec} \in \decoratedstate_\lambda$, the vertex reading word of $S^\text{dec}$ is precisely the reading word of the corresponding set-valued tableau $\psi(S^\text{dec})$.
\end{lemma}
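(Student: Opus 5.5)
The plan is to compare the two reading words row by row. Both words are concatenations over rows from bottom of the tableau to top: the tableau word reads row $n$ first, then row $n-1$, down to row $1$, and $\rd(S^\text{dec})=\rd(L_n)\cdots\rd(L_1)$. By Lemma~\ref{lem:paths-to-SVTtab}, $L_i$ corresponds to row $i$ of $T=\psi(S^\text{dec})$. So it suffices to show, for each fixed $i$, that $\rd(L_i)$ equals the reading word of row $i$ of $T$, namely the cells read left to right with each cell read in decreasing order.

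Fix $i$ and trace the construction $\phi$ along $L_i$. By the proof of Lemma~\ref{lem:paths-to-tab}, $L_i$ records $\lambda^{(i)}_i,\dots,\lambda^{(n)}_i$. The lattice row $j$ in which $L_i$ travels horizontally carries the horizontal steps that add the cells of $\lambda^{(j)}/\lambda^{(j-1)}$ in row $i$. Each such cell receives the entry $j$ and corresponds to a $\tt b_2$ vertex in lattice row $j$ (weight $z_j$). In the other direction, a marking at position $(i,j)$ adds $j$ to the rightmost cell of row $i$ present at stage $j$. Under $\widetilde{\mathfrak P}$ this marking is a non-trivial bump on $L_i$ in lattice row $j$.

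The key step is to show that this rightmost cell is exactly the cell created by the last $\tt b_2$ vertex preceding the bump along $L_i$. To see this, walk along $L_i$ from left to right, equivalently upward through the lattice rows. The cells of row $i$ are created in order by the successive $\tt b_2$ vertices. At the moment $L_i$ reaches lattice row $j$, the cells existing are those from $\tt b_2$ vertices in rows $<j$. Any bump met in row $j$ lies before any $\tt b_2$ vertex of row $j$, since a bump is an up-in, right-out turn into a new lattice row. Hence the cell receiving $j$ is the one from the most recent $\tt b_2$ vertex.

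Consecutive non-trivial bumps, possibly separated only by non-contributing vertices, therefore all attach to the same preceding $\tt b_2$ cell. This is exactly the grouping in Definition~\ref{def:lattice-reading-word}, and $\tt b_2$ vertices followed by no bump form singleton groups. Within a group the entries are the row index of the $\tt b_2$ vertex together with the larger row indices of the bumps. Listing them in decreasing order matches reading the multicell in decreasing order. Because the groups occur left to right along $L_i$ in the same order as the cells they create, concatenating the groups gives the reading word of row $i$.

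The main obstacle is the claim that no bump can occur on $L_i$ before its first $\tt b_2$ vertex. If it could, that bump's entry would have no cell to attach to. This is guaranteed by the marking condition $i<j$ together with $\lambda^{(j)}_{i+1}<\lambda^{(j-1)}_i$, which forces $\lambda^{(j-1)}_i>0$, so the row is nonempty when the marking is applied. I would also check carefully that "consecutive contributing bumps" in the definition means consecutive among the contributing vertices along $L_i$, so that trivial bumps and $\tt c_1$ or $\tt b_1$ vertices in between do not split a group. Once these points are settled, the comparison is direct bookkeeping.
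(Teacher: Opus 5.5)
Your proof is correct and is essentially the paper's argument: the paper just says the lemma follows immediately from Lemma~\ref{lem:paths-to-SVTtab}, that is, $L_i$ is row $i$ and the non-trivial bumps on $L_i$ are the crowded entries of that row. You also spell out the bookkeeping the paper leaves implicit: each non-trivial bump in lattice row $j$ attaches to the cell of the last $\tt b_2$ vertex preceding it on $L_i$ (since step (iii) of $\phi$ precedes step (iv), a bump precedes all $\tt b_2$ vertices of its lattice row, and $\lambda^{(j-1)}_i>\lambda^{(j)}_{i+1}\geqslant 0$), and this is exactly what makes the grouping in Definition~\ref{def:lattice-reading-word} match the multicells.
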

This follows immediately from Lemma~\ref{lem:paths-to-SVTtab}. Thus, a decorated state is Yamanouchi exactly when its vertex reading word is Yamanouchi.

\begin{example}
Consider the following decorated state with boundary condition $\lambda = (2,2,1,0)$ and every bump is non-trivial.
  \[
    \begin{tikzpicture}

    \draw (0,-0.75)--(0,4);
    \draw (1,-0.75)--(1,4);
    \draw (2,-0.75)--(2,4);
    \draw (3,-0.75)--(3,4);
    \draw (4, -0.75) -- (4,4);
    \draw (5, -0.75) -- (5,4);

    \draw (-1,0)--(5.5,0);
    \draw (-1,1)--(5.5,1);
    \draw (-1, 2) -- (5.5, 2);
    \draw (-1, 3) -- (5.5, 3);

    \draw[-{Stealth[length=3mm,width=2mm]}, Orange, line width=1pt] (-1,3)--(0,3);
    \draw[-{Stealth[length=3mm,width=2mm]}, Orange, line width=1pt] (0,3)--(0,4);

    \draw[-{Stealth[length=3mm,width=2mm]}, Green, line width=1pt] (-1,2)--(0,2);
    \draw[-{Stealth[length=3mm,width=2mm]}, Green, line width=1pt] (0,2)--(0,3);
    \draw[-{Stealth[length=3mm,width=2mm]}, Green, line width=1pt] (0,3)--(1,3);
    \draw[-{Stealth[length=3mm,width=2mm]}, Green, line width=1pt] (1,3)--(2,3);
     \draw[-{Stealth[length=3mm,width=2mm]}, Green, line width=1pt] (2,3)--(2,4);
   
     \draw[-{Stealth[length=3mm,width=2mm]},red, line width=1.pt] (-1,1)--(0,1);
     \draw[-{Stealth[length=3mm,width=2mm]},red, line width=1pt] (0,1)--(1,1);
     \draw[-{Stealth[length=3mm,width=2mm]},red, line width = 1pt] (1,1)--(1,2);
    \draw[-{Stealth[length=3mm,width=2mm]},red, line width=1pt] (1,2)--(2,2);
    \draw[-{Stealth[length=3mm,width=2mm]},red, line width=1pt] (2,2)--(2,3);
    \draw[-{Stealth[length=3mm,width=2mm]},red, line width=1pt] (2,3)--(3,3);
      \draw[-{Stealth[length=3mm,width=2mm]},red, line width=1pt] (3,3)--(4,3);
      \draw[-{Stealth[length=3mm,width=2mm]},red, line width=1pt] (4,3)--(4,4);

    \draw[-{Stealth[length=3mm,width=2mm]},blue,line width=1pt] (-1,0)--(0,0);
     \draw[-{Stealth[length=3mm,width=2mm]},blue, line width=1pt] (0,0)--(1,0);
    \draw[-{Stealth[length=3mm,width=2mm]},blue, line width=1pt] (1,0)--(2,0);
     \draw[-{Stealth[length=3mm,width=2mm]},blue, line width=1pt] (2,0)--(2,1);
     \draw[-{Stealth[length=3mm,width=2mm]},blue, line width=1pt] (2,1)--(3,1);
      \draw[-{Stealth[length=3mm,width=2mm]},blue, line width=1pt] (3,1)--(3,2);
     \draw[-{Stealth[length=3mm,width=2mm]},blue, line width=1pt] (3,2)--(4,2);
     \draw[-{Stealth[length=3mm,width=2mm]},blue, line width=1pt] (4,2)--(4,3);
     \draw[-{Stealth[length=3mm,width=2mm]},blue, line width=1pt] (4,3)--(5,3);
     \draw[-{Stealth[length=3mm,width=2mm]},blue, line width=1pt] (5,3)--(5,4);

    \node at (-1.25,0) {$ z_1$};
    \node at (-1.25,1) {$ z_2$};
    \node at (-1.25, 2) {$z_3$};
    \node at (-1.25, 3) {$z_4$};

    \node[circle, draw = violet, ultra thick, minimum size=10pt, inner sep=2pt] (c) at (2,1){};
    \node[circle, draw = violet, ultra thick, minimum size=10pt, inner sep=2pt] (c) at (1,2){};
    \node[circle, draw = violet, ultra thick, minimum size=10pt, inner sep=2pt] (c) at (3,2){};
    \node[circle, draw = violet, fill=violet,opacity = 0.4, ultra thick, minimum size=7pt, inner sep=2pt] (c) at (0,0){};    
    \node[circle, draw = violet, fill=violet, opacity = 0.4,ultra thick, minimum size=7pt, inner sep=2pt] (c) at (1,0){};
    \node[circle, draw = violet, fill=violet, opacity = 0.4,ultra thick, minimum size=7pt, inner sep=2pt] (c) at (0,1){};
    \node[circle, draw = violet, fill=violet, opacity = 0.4,ultra thick, minimum size=7pt, inner sep=2pt] (c) at (1,3){};
    \node[circle, draw = violet, fill=violet, opacity = 0.4,ultra thick, minimum size=7pt, inner sep=2pt] (c) at (3,3){};
    
    \end{tikzpicture}\]

    The reading word for this state is $\textcolor{Green}{4}\underline{\textcolor{red}{32}}\textcolor{red}{4}\textcolor{blue}{1}\underline{\textcolor{blue}{321}}$ where groupings with contributing bumps are underlined.
\end{example}

By uncrowding the decorated states that yield a Yamanouchi vertex reading word, we can obtain the Schur expansion for symmetric Grothendieck polynomials using only the lattice model.

\begin{example}
    Consider the partition $\lambda = (2,2,0)$ with $n=3$. There are $6$ admissible states in $\states_\lambda^3$ yielding 13 decorated states in $\decoratedstate_\lambda$. By restricting to decorated states with a Yamanouchi vertex reading word, we only need to uncrowd the following four decorated states. 
    \[
    \scalebox{0.9}{\begin{minipage}{0.5\textwidth}
{\begin{tikzpicture}

    \draw (0,-0.75)--(0,3);
    \draw (1,-0.75)--(1,3);
    \draw (2,-0.75)--(2,3);
    \draw (3,-0.75)--(3,3);
    \draw (4, -0.75) -- (4,3);

    \draw (-1,0)--(4.5,0);
    \draw (-1,1)--(4.5,1);
    \draw (-1, 2) -- (4.5, 2);

    \draw[-{Stealth[length=3mm,width=2mm]}, Green, line width=1pt] (-1,2)--(0,2);
    \draw[-{Stealth[length=3mm,width=2mm]}, Green, line width=1pt] (0,2)--(0,3);

     \draw[-{Stealth[length=3mm,width=2mm]},red, line width=1.pt] (-1,1)--(0,1);
     \draw[-{Stealth[length=3mm,width=2mm]},red, line width=1pt] (0,1)--(1,1);
     \draw[-{Stealth[length=3mm,width=2mm]},red, line width = 1pt] (1,1)--(2,1);
    \draw[-{Stealth[length=3mm,width=2mm]},red, line width=1pt] (2,1)--(2,2);
    \draw[-{Stealth[length=3mm,width=2mm]},red, line width=1pt] (2,2)--(3,2);
    \draw[-{Stealth[length=3mm,width=2mm]},red, line width=1pt] (3,2)--(3,3);

    \draw[-{Stealth[length=3mm,width=2mm]},blue,line width=1pt] (-1,0)--(0,0);
     \draw[-{Stealth[length=3mm,width=2mm]},blue, line width=1pt] (0,0)--(1,0);
    \draw[-{Stealth[length=3mm,width=2mm]},blue, line width=1pt] (1,0)--(2,0);
     \draw[-{Stealth[length=3mm,width=2mm]},blue, line width=1pt] (2,0)--(2,1);
     \draw[-{Stealth[length=3mm,width=2mm]},blue, line width=1pt] (2,1)--(3,1);
      \draw[-{Stealth[length=3mm,width=2mm]},blue, line width=1pt] (3,1)--(3,2);
     \draw[-{Stealth[length=3mm,width=2mm]},blue, line width=1pt] (3,2)--(4,2);
     \draw[-{Stealth[length=3mm,width=2mm]},blue, line width=1pt] (4,2)--(4,3);

    \node at (-1.25,0) {$ z_1$};
    \node at (-1.25,1) {$ z_2$};
    \node at (-1.25, 2) {$z_3$};

    \draw [-Stealth](5.5,1) -- (7,1); 
    \node ($U$) at (6.3,1.25) {$\psi$};
    
    \node[circle, draw = violet, fill=violet, opacity = 0.4,ultra thick, minimum size=7pt, inner sep=2pt] (c) at (0,0){};
    \node[circle, draw = violet, fill=violet, opacity = 0.4,ultra thick, minimum size=7pt, inner sep=2pt] (c) at (1,0){};
    \node[circle, draw = violet, fill=violet, opacity = 0.4,ultra thick, minimum size=7pt, inner sep=2pt] (c) at (0,1){};
    \node[circle, draw = violet, fill=violet,opacity = 0.4, ultra thick, minimum size=7pt, inner sep=2pt] (c) at (1,1){};
        \node[cross out, draw = violet,  ultra thick, minimum size=10pt, inner sep=2pt] (c) at (2,2){};
  \end{tikzpicture}}
  \end{minipage} \quad \quad 
  \begin{minipage}{0.2\textwidth}
      $\ytableaushort{1 {1},2 2}$
    \end{minipage}}
    \]

\[
\scalebox{0.9}{\begin{minipage}{0.45\textwidth}
{\begin{tikzpicture}

    \draw (0,-0.75)--(0,3);
    \draw (1,-0.75)--(1,3);
    \draw (2,-0.75)--(2,3);
    \draw (3,-0.75)--(3,3);
    \draw (4, -0.75) -- (4,3);

    \draw (-1,0)--(4.5,0);
    \draw (-1,1)--(4.5,1);
    \draw (-1, 2) -- (4.5, 2);

    \draw[-{Stealth[length=3mm,width=2mm]}, Green, line width=1pt] (-1,2)--(0,2);
    \draw[-{Stealth[length=3mm,width=2mm]}, Green, line width=1pt] (0,2)--(0,3);

     \draw[-{Stealth[length=3mm,width=2mm]},red, line width=1.pt] (-1,1)--(0,1);
     \draw[-{Stealth[length=3mm,width=2mm]},red, line width=1pt] (0,1)--(1,1);
     \draw[-{Stealth[length=3mm,width=2mm]},red, line width = 1pt] (1,1)--(2,1);
    \draw[-{Stealth[length=3mm,width=2mm]},red, line width=1pt] (2,1)--(2,2);
    \draw[-{Stealth[length=3mm,width=2mm]},red, line width=1pt] (2,2)--(3,2);
    \draw[-{Stealth[length=3mm,width=2mm]},red, line width=1pt] (3,2)--(3,3);

    \draw[-{Stealth[length=3mm,width=2mm]},blue,line width=1pt] (-1,0)--(0,0);
     \draw[-{Stealth[length=3mm,width=2mm]},blue, line width=1pt] (0,0)--(1,0);
    \draw[-{Stealth[length=3mm,width=2mm]},blue, line width=1pt] (1,0)--(2,0);
     \draw[-{Stealth[length=3mm,width=2mm]},blue, line width=1pt] (2,0)--(2,1);
     \draw[-{Stealth[length=3mm,width=2mm]},blue, line width=1pt] (2,1)--(3,1);
      \draw[-{Stealth[length=3mm,width=2mm]},blue, line width=1pt] (3,1)--(3,2);
     \draw[-{Stealth[length=3mm,width=2mm]},blue, line width=1pt] (3,2)--(4,2);
     \draw[-{Stealth[length=3mm,width=2mm]},blue, line width=1pt] (4,2)--(4,3);

    \node at (-1.25,0) {$ z_1$};
    \node at (-1.25,1) {$ z_2$};
    \node at (-1.25, 2) {$z_3$};   

    \node[circle, draw = violet, ultra thick, minimum size=10pt, inner sep=2pt] (c) at (2,2){};
    \node[circle, draw = violet, fill=violet, opacity = 0.4,ultra thick, minimum size=7pt, inner sep=2pt] (c) at (0,0){};
    \node[circle, draw = violet, fill=violet,opacity = 0.4, ultra thick, minimum size=7pt, inner sep=2pt] (c) at (1,0){};
    \node[circle, draw = violet, fill=violet,opacity = 0.4, ultra thick, minimum size=7pt, inner sep=2pt] (c) at (0,1){};
    \node[circle, draw = violet, fill=violet, opacity = 0.4,ultra thick, minimum size=7pt, inner sep=2pt] (c) at (1,1){};

    \draw[-Stealth](5,1) -- (6,1);
    \node ($U$) at (5.5,1.25) {$\U_\LAT$};

  \end{tikzpicture}}
  \end{minipage} \;
    \begin{minipage}{0.4\textwidth}
        {\begin{tikzpicture}

    \draw (0,-0.75)--(0,3);
    \draw (1,-0.75)--(1,3);
    \draw (2,-0.75)--(2,3);
    \draw (3,-0.75)--(3,3);
    \draw (4, -0.75) -- (4,3);

    \draw (-1,0)--(4.5,0);
    \draw (-1,1)--(4.5,1);
    \draw (-1, 2) -- (4.5, 2);

  
    \draw[-{Stealth[length=3mm,width=2mm]}, Green, line width=1pt] (-1,2)--(0,2);
    \draw[-{Stealth[length=3mm,width=2mm]}, Green, line width=1pt] (0,2)--(1,2);
    \draw[-{Stealth[length=3mm,width=2mm]}, Green, line width=1pt] (1,2)--(1,3);
    
     \draw[-{Stealth[length=3mm,width=2mm]},red, line width=1.pt] (-1,1)--(0,1);
     \draw[-{Stealth[length=3mm,width=2mm]},red, line width=1pt] (0,1)--(1,1);
     \draw[-{Stealth[length=3mm,width=2mm]},red, line width = 1pt] (1,1)--(2,1);
    \draw[-{Stealth[length=3mm,width=2mm]},red, line width=1pt] (2,1)--(2,2);
    \draw[-{Stealth[length=3mm,width=2mm]},red, line width=1pt] (2,2)--(3,2);
    \draw[-{Stealth[length=3mm,width=2mm]},red, line width=1pt] (3,2)--(3,3);

    \draw[-{Stealth[length=3mm,width=2mm]},blue,line width=1pt] (-1,0)--(0,0);
     \draw[-{Stealth[length=3mm,width=2mm]},blue, line width=1pt] (0,0)--(1,0);
    \draw[-{Stealth[length=3mm,width=2mm]},blue, line width=1pt] (1,0)--(2,0);
     \draw[-{Stealth[length=3mm,width=2mm]},blue, line width=1pt] (2,0)--(2,1);
     \draw[-{Stealth[length=3mm,width=2mm]},blue, line width=1pt] (2,1)--(3,1);
      \draw[-{Stealth[length=3mm,width=2mm]},blue, line width=1pt] (3,1)--(3,2);
     \draw[-{Stealth[length=3mm,width=2mm]},blue, line width=1pt] (3,2)--(4,2);
     \draw[-{Stealth[length=3mm,width=2mm]},blue, line width=1pt] (4,2)--(4,3);

    \node at (-1.25,0) {$ z_1$};
    \node at (-1.25,1) {$ z_2$};
    \node at (-1.25, 2) {$z_3$};

    \node at (5.5,1.25) {$\psi$};
    \draw [-Stealth](5,1) -- (6,1);   
    \node[circle, draw = violet, fill=violet, opacity = 0.4,ultra thick, minimum size=7pt, inner sep=2pt] (c) at (0,0){};
    \node[circle, draw = violet, fill=violet,opacity = 0.4, ultra thick, minimum size=7pt, inner sep=2pt] (c) at (1,0){};
    \node[circle, draw = violet, fill=violet, opacity = 0.4,ultra thick, minimum size=7pt, inner sep=2pt] (c) at (0,1){};
    \node[circle, draw = violet, fill=violet, opacity = 0.4,ultra thick, minimum size=7pt, inner sep=2pt] (c) at (1,1){};
        \node[circle, draw = violet, fill=violet,opacity = 0.4, ultra thick, minimum size=7pt, inner sep=2pt] (c) at (0,2){};
    \node[cross out, draw = violet, ultra thick, minimum size=10pt, inner sep=2pt] (c) at (2,2){};
  \end{tikzpicture}}
    \end{minipage} \quad \quad \quad \quad
  \begin{minipage}{0.25\textwidth}
      $\ytableaushort{1 {1},2 2, 3}$
  \end{minipage}} \]
  
\[
\scalebox{0.9}{\begin{minipage}{0.45\textwidth}
{\begin{tikzpicture}

    \draw (0,-0.75)--(0,3);
    \draw (1,-0.75)--(1,3);
    \draw (2,-0.75)--(2,3);
    \draw (3,-0.75)--(3,3);
    \draw (4, -0.75) -- (4,3);
 
    \draw (-1,0)--(4.5,0);
    \draw (-1,1)--(4.5,1);
    \draw (-1, 2) -- (4.5, 2);


    \draw[-{Stealth[length=3mm,width=2mm]}, Green, line width=1pt] (-1,2)--(0,2);
    \draw[-{Stealth[length=3mm,width=2mm]}, Green, line width=1pt] (0,2)--(0,3);
  
     \draw[-{Stealth[length=3mm,width=2mm]},red, line width=1.pt] (-1,1)--(0,1);
     \draw[-{Stealth[length=3mm,width=2mm]},red, line width=1pt] (0,1)--(1,1);
     \draw[-{Stealth[length=3mm,width=2mm]},red, line width = 1pt] (1,1)--(1,2);
    \draw[-{Stealth[length=3mm,width=2mm]},red, line width=1pt] (1,2)--(2,2);
    \draw[-{Stealth[length=3mm,width=2mm]},red, line width=1pt] (2,2)--(3,2);
    \draw[-{Stealth[length=3mm,width=2mm]},red, line width=1pt] (3,2)--(3,3);

    \draw[-{Stealth[length=3mm,width=2mm]},blue,line width=1pt] (-1,0)--(0,0);
     \draw[-{Stealth[length=3mm,width=2mm]},blue, line width=1pt] (0,0)--(1,0);
    \draw[-{Stealth[length=3mm,width=2mm]},blue, line width=1pt] (1,0)--(2,0);
     \draw[-{Stealth[length=3mm,width=2mm]},blue, line width=1pt] (2,0)--(2,1);
     \draw[-{Stealth[length=3mm,width=2mm]},blue, line width=1pt] (2,1)--(3,1);
      \draw[-{Stealth[length=3mm,width=2mm]},blue, line width=1pt] (3,1)--(3,2);
     \draw[-{Stealth[length=3mm,width=2mm]},blue, line width=1pt] (3,2)--(4,2);
     \draw[-{Stealth[length=3mm,width=2mm]},blue, line width=1pt] (4,2)--(4,3);

    \node at (-1.25,0) {$ z_1$};
    \node at (-1.25,1) {$ z_2$};
    \node at (-1.25, 2) {$z_3$};

    \node[circle, draw = violet, ultra thick, minimum size=10pt] (c) at (2,1){};

    \draw[-Stealth](5,1) -- (6,1);
    \node ($U$) at (5.5,1.25) {$\U_\LAT$}; 

    \node[circle, draw = violet, fill=violet, opacity = 0.4, ultra thick, minimum size=7pt, inner sep=2pt] (c) at (0,0){};
    \node[circle, draw = violet, fill=violet, opacity = 0.4, ultra thick, minimum size=7pt, inner sep=2pt] (c) at (1,0){};
    \node[circle, draw = violet, fill=violet, opacity = 0.4, ultra thick, minimum size=7pt, inner sep=2pt] (c) at (0,1){};
    \node[circle, draw = violet, fill=violet,opacity = 0.4, ultra thick, minimum size=7pt, inner sep=2pt] (c) at (2,2){};
    
     \node[cross out, draw = violet, ultra thick, minimum size=10pt, inner sep=2pt] (c) at (1,2){};
   
  \end{tikzpicture}}
  \end{minipage} \;
    \begin{minipage}{0.4\textwidth}
        {\begin{tikzpicture}

    \draw (0,-0.75)--(0,3);
    \draw (1,-0.75)--(1,3);
    \draw (2,-0.75)--(2,3);
    \draw (3,-0.75)--(3,3);
    \draw (4, -0.75) -- (4,3);

    \draw (-1,0)--(4.5,0);
    \draw (-1,1)--(4.5,1);
    \draw (-1, 2) -- (4.5, 2);
   
    \draw[-{Stealth[length=3mm,width=2mm]}, Green, line width=1pt] (-1,2)--(0,2);
    \draw[-{Stealth[length=3mm,width=2mm]}, Green, line width=1pt] (0,2)--(1,2);
    \draw[-{Stealth[length=3mm,width=2mm]}, Green, line width=1pt] (1,2)--(1,3);

     \draw[-{Stealth[length=3mm,width=2mm]},red, line width=1.pt] (-1,1)--(0,1);
     \draw[-{Stealth[length=3mm,width=2mm]},red, line width=1pt] (0,1)--(1,1);
     \draw[-{Stealth[length=3mm,width=2mm]},red, line width = 1pt] (1,1)--(2,1);
    \draw[-{Stealth[length=3mm,width=2mm]},red, line width=1pt] (2,1)--(2,2);
    \draw[-{Stealth[length=3mm,width=2mm]},red, line width=1pt] (2,2)--(3,2);
    \draw[-{Stealth[length=3mm,width=2mm]},red, line width=1pt] (3,2)--(3,3);
   
    \draw[-{Stealth[length=3mm,width=2mm]},blue,line width=1pt] (-1,0)--(0,0);
     \draw[-{Stealth[length=3mm,width=2mm]},blue, line width=1pt] (0,0)--(1,0);
    \draw[-{Stealth[length=3mm,width=2mm]},blue, line width=1pt] (1,0)--(2,0);
     \draw[-{Stealth[length=3mm,width=2mm]},blue, line width=1pt] (2,0)--(2,1);
     \draw[-{Stealth[length=3mm,width=2mm]},blue, line width=1pt] (2,1)--(3,1);
      \draw[-{Stealth[length=3mm,width=2mm]},blue, line width=1pt] (3,1)--(3,2);
     \draw[-{Stealth[length=3mm,width=2mm]},blue, line width=1pt] (3,2)--(4,2);
     \draw[-{Stealth[length=3mm,width=2mm]},blue, line width=1pt] (4,2)--(4,3);
   
    \node at (-1.25,0) {$ z_1$};
    \node at (-1.25,1) {$ z_2$};
    \node at (-1.25, 2) {$z_3$};

    \node[circle, draw = violet, fill=violet, opacity = 0.4,ultra thick, minimum size=7pt, inner sep=2pt] (c) at (0,0){};
    \node[circle, draw = violet, fill=violet, opacity = 0.4,ultra thick, minimum size=7pt, inner sep=2pt] (c) at (1,0){};
    \node[circle, draw = violet, fill=violet, opacity = 0.4,ultra thick, minimum size=7pt, inner sep=2pt] (c) at (0,1){};
    \node[circle, draw = violet, fill=violet, opacity = 0.4,ultra thick, minimum size=7pt, inner sep=2pt] (c) at (1,1){};
    \node[circle, draw = violet, fill=violet, opacity = 0.4,ultra thick, minimum size=7pt, inner sep=2pt] (c) at (0,2){};
        \node[cross out, draw = violet, ultra thick, minimum size=10pt, inner sep=2pt] (c) at (2,2){};
    
    \draw [-Stealth](5,1) -- (6,1);   
    \node at (5.5,1.25) {$\psi$};
  \end{tikzpicture}}
    \end{minipage} \quad \quad \quad \quad
  \begin{minipage}{0.25\textwidth}
      $\ytableaushort{1 {1},2 2, 3}$
  \end{minipage}} \]

\[
\scalebox{0.9}{\begin{minipage}{0.45\textwidth}
{\begin{tikzpicture}

    \draw (0,-0.75)--(0,3);
    \draw (1,-0.75)--(1,3);
    \draw (2,-0.75)--(2,3);
    \draw (3,-0.75)--(3,3);
    \draw (4, -0.75) -- (4,3);

    \draw (-1,0)--(4.5,0);
    \draw (-1,1)--(4.5,1);
    \draw (-1, 2) -- (4.5, 2);
   
    \draw[-{Stealth[length=3mm,width=2mm]}, Green, line width=1pt] (-1,2)--(0,2);
    \draw[-{Stealth[length=3mm,width=2mm]}, Green, line width=1pt] (0,2)--(0,3);

     \draw[-{Stealth[length=3mm,width=2mm]},red, line width=1.pt] (-1,1)--(0,1);
     \draw[-{Stealth[length=3mm,width=2mm]},red, line width=1pt] (0,1)--(1,1);
     \draw[-{Stealth[length=3mm,width=2mm]},red, line width = 1pt] (1,1)--(1,2);
    \draw[-{Stealth[length=3mm,width=2mm]},red, line width=1pt] (1,2)--(2,2);
    \draw[-{Stealth[length=3mm,width=2mm]},red, line width=1pt] (2,2)--(3,2);
    \draw[-{Stealth[length=3mm,width=2mm]},red, line width=1pt] (3,2)--(3,3);

    \draw[-{Stealth[length=3mm,width=2mm]},blue,line width=1pt] (-1,0)--(0,0);
     \draw[-{Stealth[length=3mm,width=2mm]},blue, line width=1pt] (0,0)--(1,0);
    \draw[-{Stealth[length=3mm,width=2mm]},blue, line width=1pt] (1,0)--(2,0);
     \draw[-{Stealth[length=3mm,width=2mm]},blue, line width=1pt] (2,0)--(2,1);
     \draw[-{Stealth[length=3mm,width=2mm]},blue, line width=1pt] (2,1)--(3,1);
      \draw[-{Stealth[length=3mm,width=2mm]},blue, line width=1pt] (3,1)--(3,2);
     \draw[-{Stealth[length=3mm,width=2mm]},blue, line width=1pt] (3,2)--(4,2);
     \draw[-{Stealth[length=3mm,width=2mm]},blue, line width=1pt] (4,2)--(4,3);

    \node at (-1.25,0) {$ z_1$};
    \node at (-1.25,1) {$ z_2$};
    \node at (-1.25, 2) {$z_3$};
  
    \node[circle, draw = violet, ultra thick, minimum size=10pt] (c) at (2,1){};
    \node[circle, draw = violet, ultra thick, minimum size=10pt] (c) at (1,2){};
    \draw[-Stealth](5,1) -- (6,1);
    \node ($U$) at (5.5,1.25) {$\U_\LAT$};
    \node[circle, draw = violet, fill=violet, opacity = 0.4,ultra thick, minimum size=7pt, inner sep=2pt] (c) at (0,0){};
    \node[circle, draw = violet, fill=violet, opacity = 0.4,ultra thick, minimum size=7pt, inner sep=2pt] (c) at (1,0){};
    \node[circle, draw = violet, fill=violet,opacity = 0.4, ultra thick, minimum size=7pt, inner sep=2pt] (c) at (0,1){};
    \node[circle, draw = violet, fill=violet, opacity = 0.4,ultra thick, minimum size=7pt, inner sep=2pt] (c) at (2,2){};
    
  \end{tikzpicture}}
  \end{minipage} \;
    \begin{minipage}{0.4\textwidth}
        {\begin{tikzpicture}

    \draw (0,-0.75)--(0,3);
    \draw (1,-0.75)--(1,3);
    \draw (2,-0.75)--(2,3);
    \draw (3,-0.75)--(3,3);
    \draw (4, -0.75) -- (4,3);

    \draw (-1,0)--(4.5,0);
    \draw (-1,1)--(4.5,1);
    \draw (-1, 2) -- (4.5, 2);

    \draw[-{Stealth[length=3mm,width=2mm]}, Green, line width=1pt] (-1,2)--(0,2);
    \draw[-{Stealth[length=3mm,width=2mm]}, Green, line width=1pt] (0,2)--(1,2);
    \draw[-{Stealth[length=3mm,width=2mm]}, Green, line width=1pt] (1,2)--(2,2);
    \draw[-{Stealth[length=3mm,width=2mm]}, Green, line width=1pt] (2,2)--(2,3);

     \draw[-{Stealth[length=3mm,width=2mm]},red, line width=1.pt] (-1,1)--(0,1);
     \draw[-{Stealth[length=3mm,width=2mm]},red, line width=1pt] (0,1)--(1,1);
     \draw[-{Stealth[length=3mm,width=2mm]},red, line width = 1pt] (1,1)--(2,1);
    \draw[-{Stealth[length=3mm,width=2mm]},red, line width=1pt] (2,1)--(2,2);
    \draw[-{Stealth[length=3mm,width=2mm]},red, line width=1pt] (2,2)--(3,2);
    \draw[-{Stealth[length=3mm,width=2mm]},red, line width=1pt] (3,2)--(3,3);

    \draw[-{Stealth[length=3mm,width=2mm]},blue,line width=1pt] (-1,0)--(0,0);
     \draw[-{Stealth[length=3mm,width=2mm]},blue, line width=1pt] (0,0)--(1,0);
    \draw[-{Stealth[length=3mm,width=2mm]},blue, line width=1pt] (1,0)--(2,0);
     \draw[-{Stealth[length=3mm,width=2mm]},blue, line width=1pt] (2,0)--(2,1);
     \draw[-{Stealth[length=3mm,width=2mm]},blue, line width=1pt] (2,1)--(3,1);
      \draw[-{Stealth[length=3mm,width=2mm]},blue, line width=1pt] (3,1)--(3,2);
     \draw[-{Stealth[length=3mm,width=2mm]},blue, line width=1pt] (3,2)--(4,2);
     \draw[-{Stealth[length=3mm,width=2mm]},blue, line width=1pt] (4,2)--(4,3);

    \node at (-1.25,0) {$ z_1$};
    \node at (-1.25,1) {$ z_2$};
    \node at (-1.25, 2) {$z_3$};
    \node[circle, draw = violet, fill=violet,opacity = 0.4, ultra thick, minimum size=7pt, inner sep=2pt] (c) at (0,0){};
    \node[circle, draw = violet, fill=violet,opacity = 0.4, ultra thick, minimum size=7pt, inner sep=2pt] (c) at (1,0){};
    \node[circle, draw = violet, fill=violet, opacity = 0.4,ultra thick, minimum size=7pt, inner sep=2pt] (c) at (0,1){};
    \node[circle, draw = violet, fill=violet, opacity = 0.4,ultra thick, minimum size=7pt, inner sep=2pt] (c) at (1,1){};
    \node[circle, draw = violet, fill=violet, opacity = 0.4,ultra thick, minimum size=7pt, inner sep=2pt] (c) at (0,2){};
    \node[circle, draw = violet, fill=violet,opacity = 0.4, ultra thick, minimum size=7pt, inner sep=2pt] (c) at (1,2){};

    \draw [-Stealth](5,1) -- (6,1);    
    \node at (5.5,1.25) {$\psi$};
  \end{tikzpicture}}
    \end{minipage} \quad \quad \quad \quad
  \begin{minipage}{0.25\textwidth}
      $\ytableaushort{1 1,2 2, 3 3}$
  \end{minipage}}
\]
  \commentAlt{Four rows of decorated states. The first row consists of a decorated state and the corresponding semistandard Young tableaux. Each of the remaining three rows contains a decorated state, the state after uncrowding, and the corresponding semistandard Young tableau.}

    We obtain one decorated state with no non-trivial bumps corresponding to the shape $(2,2),$ two decorated states with one non-trivial bump, each uncrowding to the 
    shape $(2,2,1),$ and one decorated state with two non-trivial bumps uncrowding to the shape $(2,2,2).$ This gives us
    $$\G_\lambda(z_1, z_2, z_3; \beta) = s_{(2,2,0)} + 2\beta s_{(2,2,1)} + \beta^2s_{(2,2,2)}.$$
\end{example}

\section{Crystal Operators on Lattice Models}
\label{section.crystal}

In this section, we define a $U_q(\mathfrak{sl}_n)$-crystal structure on $\decoratedstate_\lambda$ and show that it corresponds to
the crystal structure on set-valued tableaux defined in~\cite{Monical_2020} under the map $\psi$ in~\eqref{eq:bij-decoratedstate-SVT}.

\subsection{Definition of crystal operators}
For a fixed partition $\lambda$, let $n \geqslant \ell(\lambda)$ and $m=\lambda_1+n$. For $i \in I :=\{1,2,\ldots, n-1\}$, define 
\[
	e_i, f_i\colon \decoratedstate_\lambda \rightarrow \decoratedstate_\lambda \sqcup \{0\}
\]
as follows.

Fix $S^\text{dec} \in \decoratedstate_\lambda$ and $i \in I$. Recall that the contributing vertices of $S^\text{dec}$ are the $\tt b_2$ vertices and non-trivial bumps associated
to $\tt a_2$ vertices in Figure~\ref{fig:uncolored_wts}.
We first describe the signature rule for $S^\text{dec}$. Recall that lattice rows are indexed from bottom to top, so lattice row $i+1$ lies immediately above lattice row $i$.
\begin{enumerate}
    \item [(D1)] Decorate each contributing vertex in lattice row $i$ with $\rightbrack$. Decorate each contributing vertex in lattice row $i+1$ with $\leftbrack$.
    \item [(D2)] First, if $\leftbrack$ lies on an $\tt a_2$ vertex, pair it with $\rightbrack$ in the column immediately to the left of the $\leftbrack$, if it exists.
    \item [(D3)] Next, if $\leftbrack$ lies on a $\tt b_2$ vertex, pair it with the closest $\rightbrack$ that lies in a column weakly to the right of that $\leftbrack$.
\end{enumerate}

We illustrate the two bracketing steps in \Cref{fig:lattice_bracketing}, where the dashed orange and green arrows indicate adjacent path segments, 
which may be empty.

\begin{figure}[hbt!]
    \centering
\begin{tikzpicture}
     \draw (0,1)--(3,1);
     \draw (0,2)--(3,2);
     \draw (1,0)--(1,3);
     \draw (2,0)--(2,3);

    \draw[dotted][-{Stealth[length=3mm,width=2mm]},ForestGreen,line width=2.5pt](0,1)--(1,1);
    \draw[dotted][-{Stealth[length=3mm,width=2mm]},ForestGreen,line width=2.5pt](1,0)--(1,1);
    
    \draw[-{Stealth[length=3mm,width=2mm]},ForestGreen,line width=3pt] (1,1)--(2,1);
    \draw[-{Stealth[length=3mm,width=2mm]},ForestGreen,line width=3pt] (2,1)--(2,2);
    \draw[-{Stealth[length=3mm,width=2mm]},ForestGreen,line width=3pt] (2,2)--(3,2);

    \node at (2.1,2) {\Huge $\boldsymbol{\leftbrack}$};
    \node at (1.1,1) {\Huge $\boldsymbol{\rightbrack}$};
    
    \node at (-.5,1) {$ z_i$ };
    \node at (-.5,2) {$ z_{i+1}$ };

    \node at (0.5,1) {\Huge $($};
    \node at (2.5,2) {\Huge $)$};
  \end{tikzpicture} \qquad
  \begin{tikzpicture}
     \draw (0,0)--(3,0);
     \draw (0,1)--(3,1);
     \draw (1,-1)--(1,2);
     \draw (2,-1)--(2,2);

     \draw[dotted][-{Stealth[length=3mm,width=2mm]},ForestGreen,line width=2.5pt](0,0)--(1,0);
     \draw[dotted][-{Stealth[length=3mm,width=2mm]},ForestGreen,line width=2.5pt](1,-1)--(1,0);
     \draw[-{Stealth[length=3mm,width=2mm]},ForestGreen,line width=3pt] (1,0)--(2,0);
     \draw[-{Stealth[length=3mm,width=2mm]},ForestGreen,line width=3pt] (2,0)--(3,0);

     \draw[dotted][-{Stealth[length=3mm,width=2mm]},orange,line width=2.5pt](2,1)--(2,2);
     \draw[dotted][-{Stealth[length=3mm,width=2mm]},orange,line width=2.5pt](2,1)--(3,1);
     \draw[-{Stealth[length=3mm,width=2mm]},orange,line width=3pt] (0,1)--(1,1);
     \draw[-{Stealth[length=3mm,width=2mm]},orange,line width=3pt] (1,1)--(2,1);

    \node at (1.1,1) {\Huge $\boldsymbol{\leftbrack}$};
    \node at (2.1,0) {\Huge $\boldsymbol{\rightbrack}$};
    
    \node at (-.5,0) {$ z_i$ };
    \node at (-.5,1) {$ z_{i+1}$ };

    \node at (0.5,1) {\Huge $($};
    \node at (2.5,0) {\Huge $)$};
  \end{tikzpicture}
      \caption{The bracketing rules (D2) on the left and (D3) on the right.}
        \label{fig:lattice_bracketing}
        \commentAlt{Figure~\ref{fig:lattice_bracketing}. Two lattices on a 2 by 2 lattice exemplifying the possible local bracketing of a state. See long description.}
        \commentLongAlt{Figure~\ref{fig:lattice_bracketing}. The left image contains a left bracket on a bump vertex in the first row second column and a right bracket in the second row first column. The right image contains a left bracket in the first row first column and a right bracket in the second row second column.}
\end{figure}

After bracketing, $f_i$ acts on the vertex corresponding to the rightmost unbracketed $\rightbrack$. The operator $e_i$ acts on the vertex corresponding to the 
leftmost unbracketed $\leftbrack$. In either case, if no such $\leftbrack$ or $\rightbrack$ exists, then the crystal operator returns 0. 

Now we describe the action of $f_i$. Assume the rightmost unbracketed $\rightbrack$ exists, and let $v$ be the corresponding vertex. By inspecting 
the admissible local configurations, exactly one of the following three cases occurs at $v$.

\smallskip

\noindent
\textbf{Case $1f$:} Suppose the vertices above and northeast of a $\tt b_2$ vertex $v$ are $u$ and $w$ respectively, where $w$ is a trivial bump. Then $f_i(S^\text{dec})$ 
shifts the path from $v$ to $w$ one unit to the left and $u$ becomes a trivial bump as shown in \Cref{fig:lattice-fi-ei-1}.

\begin{figure}[H]
    \centering
\begin{tikzpicture}
     \draw (1,0)--(1,3);
    \draw (2,0)--(2,3);
    \draw (3,0)--(3,3);
    \draw (0,1)--(4,1);
    \draw (0,2)--(4,2);

    \draw[-{Stealth[length=3mm,width=2mm]},ForestGreen,line width=3pt](0,1)--(1,1);
    \draw[-{Stealth[length=3mm,width=2mm]},ForestGreen,line width=3pt] (1,1)--(2,1);
    \draw[-{Stealth[length=3mm,width=2mm]},ForestGreen,line width=3pt] (2,1)--(2,2);
    \draw[-{Stealth[length=3mm,width=2mm]},ForestGreen,line width=3pt] (2,2)--(3,2);
    \draw[-{Stealth[length=3mm,width=2mm]},ForestGreen,line width=3pt] (3,2)--(3,3);

    \draw[dotted][-{Stealth[length=3mm,width=2mm]},orange,line width=2.5pt] (0,2)--(1,2);
    \draw[dotted][-{Stealth[length=3mm,width=2mm]},orange,line width=2.5pt] (1,2)--(1,3);

    \node at (1.1,1) {\Huge $\boldsymbol{\rightbrack}$};
    \node[cross out, draw = violet, ultra thick, minimum size=10pt] at (2,2){};
    
    \node at (-.5,1) {$ z_i$ };
    \node at (-.5,2) {$ z_{i+1}$ };
    \node at (0.8,1.2) {$v$};
    \node at (0.7,2.3) {$u$};
    \node at (1.7,2.3) {$w$};
    
    \draw[-Stealth](5,2) -- (6,2);
    \node at (5.5,2.25) {$f_i$};
    \draw[Stealth-](5,1) -- (6,1);
    \node at (5.5,.75) {$e_i$};
    
  \end{tikzpicture}\qquad
  \begin{tikzpicture}
     \draw (1,0)--(1,3);
    \draw (2,0)--(2,3);
    \draw (3,0)--(3,3);
    \draw (0,1)--(4,1);
    \draw (0,2)--(4,2);

    \draw[-{Stealth[length=3mm,width=2mm]},ForestGreen,line width=3pt](0,1)--(1,1);
    \draw[-{Stealth[length=3mm,width=2mm]},ForestGreen,line width=3pt] (1,1)--(1,2);
    \draw[-{Stealth[length=3mm,width=2mm]},ForestGreen,line width=3pt] (1,2)--(2,2);
    \draw[-{Stealth[length=3mm,width=2mm]},ForestGreen,line width=3pt] (2,2)--(3,2);
    \draw[-{Stealth[length=3mm,width=2mm]},ForestGreen,line width=3pt] (3,2)--(3,3);
    
    \draw[dotted][-{Stealth[length=3mm,width=2mm]},orange,line width=2.5pt] (0,2)--(1,2);
    \draw[dotted][-{Stealth[length=3mm,width=2mm]},orange,line width=2.5pt] (1,2)--(1,3);

    \node[cross out, draw = violet, ultra thick, minimum size=10pt, inner sep=2pt] at (1,2){};
    \node at (-.5,1) {$ z_i$ };
    \node at (-.5,2) {$ z_{i+1}$ };
    \node at (0.8,1.2) {$v$};
    \node at (0.8,2.3) {$u$};
    \node at (1.7,2.3) {$w$};
    \node at (2.1,2) {\Huge $\boldsymbol{\leftbrack}$};
    
  \end{tikzpicture}

      \caption{Crystal operator $f_i$ and $e_i$ for Case $1f$ and $1e$.}
        \label{fig:lattice-fi-ei-1}
        \commentAlt{Figure~\ref{fig:lattice-fi-ei-1}. The left lattice contains a trivial bump in the first row second column and a right bracket in the second row first column. The right lattice contains a left bracket in the first row, second column. See long description.}
        \commentLongAlt{Figure~\ref{fig:lattice-fi-ei-1}. In the left lattice, there is a green path starting on the boundary on the second row with a sequence of five arrows pointing right, right, up, right, then up. The path starting on the boundary on the second row in the right lattice instead points right, up, right, right, up.}
\end{figure}

\noindent
\textbf{Case $2f$:} Suppose $v$ is a non-trivial bump and $w$ is a trivial bump northeast of $v$. Then $f_i(S^\text{dec})$ turns $v$ into a trivial bump and $w$ into a non-trivial 
bump as shown in \Cref{fig:lattice-fi-ei-2}.
\begin{figure}[H]
    \centering
\begin{tikzpicture}
     \draw (1,0)--(1,3);
    \draw (2,0)--(2,3);
    \draw (3,0)--(3,3);
    \draw (0,1)--(4,1);
    \draw (0,2)--(4,2);

    \draw[-{Stealth[length=3mm,width=2mm]},ForestGreen,line width=3pt] (1,0)--(1,1);
    \draw[-{Stealth[length=3mm,width=2mm]},ForestGreen,line width=3pt] (1,1)--(2,1);
    \draw[-{Stealth[length=3mm,width=2mm]},ForestGreen,line width=3pt] (2,1)--(2,2);
    \draw[-{Stealth[length=3mm,width=2mm]},ForestGreen,line width=3pt] (2,2)--(3,2);

    \draw[-{Stealth[length=3mm,width=2mm]},ForestGreen,line width=3pt] (3,2)--(3,3);

    \draw[dotted][-{Stealth[length=3mm,width=2mm]},orange,line width=2.5pt] (0,2)--(1,2);
    \draw[dotted][-{Stealth[length=3mm,width=2mm]},orange,line width=2.5pt] (1,2)--(1,3);

    \node[cross out, draw = violet, ultra thick, minimum size=10pt] at (2,2){};
    \node at (1.1,1) {\Huge $\boldsymbol{\rightbrack}$};
    \node at (0.8,1.2) {$v$};
    \node at (1.7,2.3) {$w$};
    \node at (-.5,1) {$ z_i$ };
    \node at (-.5,2) {$ z_{i+1}$ };
    
    \draw[-Stealth](5,2) -- (6,2);
    \node at (5.5,2.25) {$f_i$};
    \draw[Stealth-](5,1) -- (6,1);
    \node at (5.5,.75) {$e_i$};
    
  \end{tikzpicture} \qquad
\begin{tikzpicture}
     \draw (1,0)--(1,3);
    \draw (2,0)--(2,3);
    \draw (3,0)--(3,3);
    \draw (0,1)--(4,1);
    \draw (0,2)--(4,2);

    \draw[-{Stealth[length=3mm,width=2mm]},ForestGreen,line width=3pt] (1,0)--(1,1);
    \draw[-{Stealth[length=3mm,width=2mm]},ForestGreen,line width=3pt] (1,1)--(2,1);
    \draw[-{Stealth[length=3mm,width=2mm]},ForestGreen,line width=3pt] (2,1)--(2,2);
    \draw[-{Stealth[length=3mm,width=2mm]},ForestGreen,line width=3pt] (2,2)--(3,2);

    \draw[-{Stealth[length=3mm,width=2mm]},ForestGreen,line width=3pt] (3,2)--(3,3);

    \draw[dotted][-{Stealth[length=3mm,width=2mm]},orange,line width=2.5pt] (0,2)--(1,2);
    \draw[dotted][-{Stealth[length=3mm,width=2mm]},orange,line width=2.5pt] (1,2)--(1,3);

    \node[cross out, draw = violet, ultra thick, minimum size=10pt] at (1,1){};
    \node at (0.6,1.2) {$v$};
    \node at (1.7,2.3) {$w$};    
    \node at (-.5,1) {$ z_i$ };
    \node at (-.5,2) {$ z_{i+1}$ };
    \node at (2.1,2) {\Huge $\boldsymbol{\leftbrack}$};
  \end{tikzpicture}
  \caption{Crystal operator $f_i$ and $e_i$ for Case $2f$ and $2e$.}
  \label{fig:lattice-fi-ei-2}
  \commentAlt{Figure~\ref{fig:lattice-fi-ei-2}. The left lattice contains a trivial bump in the first row second column and a right bracket in the second row first column. The right lattice contains a left bracket in the first row second column and a non-trivial bump in the second row first column. See long description.}
  \commentLongAlt{Figure~\ref{fig:lattice-fi-ei-2}. In the left lattice, there is a green path starting in the first column and below the second row with a sequence of five arrows pointing up, right, up, right, then up. The path starting in the first column below the second row in the right lattice instead points up, right, up, right, up.}
\end{figure}

\smallskip

\noindent
\textbf{Case $3f$:} Suppose the local configuration contains vertices $w$ and $u$ that form a bracketed pair, and $v$ is either a non-trivial bump, or a $\tt b_2$ vertex. Then $f_i(S^\text{dec})$ shifts the path from $w$ to $u$ as shown in \Cref{fig:lattice-fi-ei-3}.

\begin{figure}[H]
    \centering
\begin{tikzpicture}
     \draw (1,0)--(1,3);
    \draw (2,0)--(2,3);
    \draw (3,0)--(3,3);
    \draw (0,1)--(4,1);
    \draw (0,2)--(4,2);

    \draw[-{Stealth[length=3mm,width=2mm]},ForestGreen,line width=3pt] (1,1)--(2,1);
    \draw[-{Stealth[length=3mm,width=2mm]},ForestGreen,line width=3pt] (2,1)--(3,1);
    \draw[-{Stealth[length=3mm,width=2mm]},ForestGreen,line width=3pt] (3,1)--(3,2);
    \draw[-{Stealth[length=3mm,width=2mm]},ForestGreen,line width=3pt] (3,2)--(4,2);

    \draw[dotted][-{Stealth[length=3mm,width=2mm]},orange,line width=2.5pt] (0,2)--(1,2);
    \draw[dotted][-{Stealth[length=3mm,width=2mm]},orange,line width=2.5pt] (1,2)--(1,3);
    \draw[dotted][-{Stealth[length=3mm,width=2mm]},ForestGreen,line width=2.5pt](0,1)--(1,1);
     \draw[dotted][-{Stealth[length=3mm,width=2mm]},ForestGreen,line width=2.5pt](1,0)--(1,1);

    \node at (1.1,1) {\Huge $\boldsymbol{\rightbrack}$};
    \node at (2.1,1) {\Huge $\boldsymbol{\rightbrack}$};
    \node at (3.1,2) {\Huge $\boldsymbol{\leftbrack}$};
    \node at (3.5,2) {\Huge $)$};
    \node at (1.6,1) {\Huge $($};
    
    \node at (-.5,1) {$ z_i$ };
    \node at (-.5,2) {$ z_{i+1}$ };
    \node at (0.8,1.2) {$v$};
    \node at (2.8,2.2) {$u$};
    \node at (1.8,1.2) {$w$};
    \draw[-Stealth](5,2) -- (6,2);
    \node at (5.5,2.25) {$f_i$};
    \draw[Stealth-](5,1) -- (6,1);
    \node at (5.5,.75) {$e_i$};

  \end{tikzpicture} \qquad
  \begin{tikzpicture}
     \draw (1,0)--(1,3);
    \draw (2,0)--(2,3);
    \draw (3,0)--(3,3);
    \draw (0,1)--(4,1);
    \draw (0,2)--(4,2);

     \draw[dotted][-{Stealth[length=3mm,width=2mm]},ForestGreen,line width=2.5pt](0,1)--(1,1);
     \draw[dotted][-{Stealth[length=3mm,width=2mm]},ForestGreen,line width=2.5pt](1,0)--(1,1);
    \draw[-{Stealth[length=3mm,width=2mm]},ForestGreen,line width=3pt] (1,1)--(2,1);
    \draw[-{Stealth[length=3mm,width=2mm]},ForestGreen,line width=3pt] (2,1)--(2,2);
    \draw[-{Stealth[length=3mm,width=2mm]},ForestGreen,line width=3pt] (2,2)--(3,2);
    \draw[-{Stealth[length=3mm,width=2mm]},ForestGreen,line width=3pt] (3,2)--(4,2);
    
    \draw[dotted][-{Stealth[length=3mm,width=2mm]},orange,line width=2.5pt] (0,2)--(1,2);
    \draw[dotted][-{Stealth[length=3mm,width=2mm]},orange,line width=2.5pt] (1,2)--(1,3);

    \node at (-.5,1) {$ z_i$ };
    \node at (-.5,2) {$ z_{i+1}$ };
    \node at (0.8,1.2) {$v$};
    \node at (2.8,2.2) {$u$};
    \node at (1.8,1.2) {$w$};
    \node at (1.8,2.2) {$t$};

    \node at (3.1,2) {\Huge $\boldsymbol{\leftbrack}$};
    \node at (1.1,1) {\Huge $\boldsymbol{\rightbrack}$};
    \node at (2.1,2) {\Huge $\boldsymbol{\leftbrack}$};
    \node at (2.4,2) {\Huge $)$};
    \node at (0.6,1) {\Huge $($};
  \end{tikzpicture}
  \caption{Crystal operator $f_i$ and $e_i$ for Case $3f$ and $3e$.}
      \label{fig:lattice-fi-ei-3}
      \commentAlt{Figure~\ref{fig:lattice-fi-ei-3}. The left lattice contains a left bracket in the first row third column and two right brackets in the second row, first and second columns. The right lattice contains two left brackets in the first row, second and third columns as well as a right bracket in the second row first column.}
\end{figure}

We now describe the action of the $e_i$ operator. Assume the leftmost unbracketed $\leftbrack$ exists, and let $w$ be the corresponding vertex. Let $v$ be the vertex one column left and one row below $w.$ 

\smallskip

\noindent
\textbf{Case $1e$:} Let $w$ be the leftmost unbracketed $\leftbrack$, and let $v$ be the vertex one column left and one row below $w.$ 
Suppose $w$ is a $\tt b_2$ vertex and the vertex immediately to the left is either a trivial bump or a $\tt b_1$ vertex. Then $e_i(S^\text{dec})$ 
shifts the path from $w$ to $v$ one unit to the right and $w$ becomes a trivial bump while $v$ becomes a $\tt b_2$ vertex as shown in \Cref{fig:lattice-fi-ei-1}.

\noindent
\textbf{Case $2e$:} Let $w$ be the leftmost unbracketed $\leftbrack$, and let $v$ be the vertex one column left and one row below $w.$ Suppose 
$w$ is a non-trivial bump and $v$ is a trivial bump. Then $e_i(S^\text{dec})$ turns $w$ into a trivial bump and $v$ into 
a non-trivial bump as shown in Figure~\ref{fig:lattice-fi-ei-2}.

\smallskip

\noindent
\textbf{Case $3e$:}  Let $u$ be the leftmost unbracketed $\leftbrack$, let $t$ be a non-trivial bump immediately to the left of $u$, and let $v$ be the 
vertex one column below and to the left of $t$, where $v$ and $t$ form a bracketed pair. Then, $e_i(S^\text{dec})$ shifts the path from $v$ to $t$ one unit to the 
right and $u$ becomes a non-trivial bump while $w$, the vertex to the right of $v$, becomes a $\tt b_2$ vertex as shown in Figure~\ref{fig:lattice-fi-ei-3}.

\begin{lemma}
    The lattice crystal operators $e_i$ and $f_i$ on $\decoratedstate_\lambda$ are well-defined. 
\end{lemma}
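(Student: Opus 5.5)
To prove the lemma I would show three things: the case lists for $f_i$ and $e_i$ are exhaustive and mutually exclusive; each prescribed move yields an admissible decorated state with the same boundary condition $\lambda$; and $e_i$, $f_i$ therefore land in $\decoratedstate_\lambda \sqcup \{0\}$. The cleanest route is to transport the problem through the bijection $\psi$ of \eqref{eq:bij-decoratedstate-SVT}. By Lemma~\ref{lem:paths-to-SVTtab}, contributing vertices in lattice rows $i$ and $i+1$ correspond exactly to the letters $i$ and $i+1$ of $\psi(S^\text{dec})$. The vertex reading word lemma says $\rd(S^\text{dec})$ is the reading word of the tableau. I would argue that (D2) and (D3) reproduce the usual bracketing of $i+1$'s with later $i$'s in that word. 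In (D2), a non-trivial bump carrying $i+1$ sits in a multicell whose minimum $i$ is the $\tt b_2$ vertex in the column just to the left, and these are read consecutively as $(i+1)\,i$. In (D3), a $\tt b_2$ vertex for $i+1$ on $L_{k+1}$ pairs with the nearest weakly-right $i$ on $L_k$, which is the column-adjacent pairing in the reading word. Hence the rightmost unbracketed $\rightbrack$ and leftmost unbracketed $\leftbrack$ exist exactly when the tableau signature has an unpaired $i$ or $i+1$, and they are uniquely determined.

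Next comes the local case analysis at the selected vertex $v$ for $f_i$. A contributing vertex in row $i$ is either a $\tt b_2$ vertex or a non-trivial $\tt a_2$ vertex. I would enumerate the admissible configurations of the $2\times 2$ block formed by $v$, the vertex $u$ above it, and the vertices $w$ to the northeast and east. The vertex conditions of \Cref{fig:uncolored_wts} leave only a few options for the north edge of $v$ and the edges out of $w$. Using unbracketedness, I would rule out the configurations not covered by Cases $1f$–$3f$. For example, if $w$ were a $\tt b_2$ or non-trivial bump in row $i+1$ directly northeast of $v$, then (D2) or (D3) would bracket it with $v$. What remains is: $w$ a trivial bump (Cases $1f$, $2f$, according to the type of $v$), or a bracketed pair immediately to the right (Case $3f$).

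In each case I would check that the path shift keeps every vertex among the five allowed types. I would also check that exit arrows, and thus the top boundary $\lambda$, are unchanged, since all shifts occur strictly inside rows $i$ and $i+1$. The argument for $e_i$ is symmetric, with Cases $1e$–$3e$ shown to invert the corresponding $f_i$ cases.

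The main obstacle is the exhaustiveness step, particularly Case $3$. There one must show that when the vertex east of $v$ is bracketed, the bracketed $\leftbrack$ lies exactly one column further right in row $i+1$, so that the shift from $w$ to $u$ is local and creates no illegal vertex. I would first attempt this by appealing to the tableau side: Monical–Pechenik–Scrimshaw $f_i$ on set-valued tableaux moves an $i$ either within a multicell or across an adjacent cell. I would then translate each of their cases back through $\phi$ to identify the lattice configuration. If that correspondence is murky, I would fall back on a direct enumeration using the horizontal-strip constraint $\lambda^{(i+1)}/\lambda^{(i)}$.
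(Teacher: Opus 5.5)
Your proposal is correct and follows essentially the paper's route. The paper's proof only notes that the bracketing determines the rightmost unbracketed $\rightbrack$ uniquely, asserts that the admissible configurations around $v$ fall into Cases $1f$--$3f$, and observes that each move preserves admissibility and the boundary (with $e_i$ analogous). Your local enumeration fills in what the paper leaves implicit, and the detour through $\psi$ is unnecessary, since uniqueness is immediate from the bracketing itself.

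One small slip in your example. If the vertex northeast of $v$ is a $\tt b_2$ vertex, then the vertex directly above $v$ is forced to be a $\tt b_2$ vertex, because its south edge is $v$'s empty north edge. It is this vertex, in the same column as $v$, that (D3) brackets with $v$; the northeast vertex itself can only pair with a $\rightbrack$ weakly to its right.
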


\begin{proof} We prove the statement for $f_i$, and the proof for $e_i$ is analogous.
The bracketing rule determines which symbols remain unbracketed, so the rightmost unbracketed $\rightbrack$ when it exists, is uniquely determined. 
If such a $\rightbrack$ exists, then the admissible local configurations around $v$ fall in one of Cases $1f$--$3f$. In each case, the local move changes 
an admissible decorated state into another, and the boundary conditions do not change. 
If such a $\rightbrack$ does not exist, then the corresponding operator is defined to be $0$.
\end{proof}

The next lemma follows by comparing the corresponding local moves in Cases $1f$ and $1e$, $2f$ and $2e$, and $3f$ and $3e$.

\begin{lemma}
    For each $i \in I$, the lattice crystal operators $e_i$ and $f_i$ are partial inverses. 
\end{lemma}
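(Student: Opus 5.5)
To show that $e_i$ and $f_i$ are partial inverses, I will prove that whenever $f_i(S^\text{dec}) = R^\text{dec}\neq 0$ we have $e_i(R^\text{dec}) = S^\text{dec}$, and symmetrically that $e_i(R^\text{dec})=S^\text{dec}\neq0$ implies $f_i(R^\text{dec})=S^\text{dec}$. The argument has two parts.

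\textbf{Matching the local moves.} The three local moves pair up as mutual inverses. The move in Case $1f$ is inverse to the move in Case $1e$, $2f$ to $2e$, and $3f$ to $3e$. This can be read off from Figures~\ref{fig:lattice-fi-ei-1}--\ref{fig:lattice-fi-ei-3}.
\begin{enumerate}[label=(\roman*)]
\item In Case $1f$, the path segment from $v$ to $w$ shifts left, so $v$ stops being a $\tt b_2$ vertex and $u$ becomes a trivial bump. Case $1e$ shifts the same segment back to the right.
\item In Case $2f$, the roles trivial/non-trivial are exchanged between $v$ and the bump $w$ northeast of it. Case $2e$ exchanges them back.
\item In Case $3f$, the bracketed pair $(w,u)$ is moved. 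Case $3e$ moves it back, with $t$ the non-trivial bump created to the left of $u$.
\end{enumerate}
Because each move only changes vertices in lattice rows $i$ and $i+1$ and within at most three consecutive columns, it suffices to compare the local pictures and check that the configuration types ($\tt a_2$ trivial or non-trivial, $\tt b_1$, $\tt b_2$, $\tt c_1$) agree after the round trip.

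\textbf{Locating the acted-on vertex.} This is the real work. I must show that after applying $f_i$ at the rightmost unbracketed $\rightbrack$ at $v$, the leftmost unbracketed $\leftbrack$ in $f_i(S^\text{dec})$ is exactly the vertex produced by the move: $w$ in Cases $1$ and $2$, and $u$ in Case $3$. Moreover, its neighbourhood must be in the matching $e$-case.

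The plan is to compare the bracket words of $S^\text{dec}$ and $f_i(S^\text{dec})$.
\begin{enumerate}[label=(\roman*)]
\item Contributing vertices outside the local window are unchanged.
\item Inside the window, the move replaces the symbol $\rightbrack$ at $v$ by $\leftbrack$ at the new position. The bracketed pair in Case $3$ is re-formed as the pair $(v,t)$ via rule (D2).
\end{enumerate}
The symbols to the left of the window are all either bracketed or $\leftbrack$'s that already bracket something to their right. Since $v$ was the rightmost unbracketed $\rightbrack$, every $\rightbrack$ to its right is paired with a $\leftbrack$ lying further right. Using the two pairing rules (D2) and (D3), I will argue that these pairings persist, so the new $\leftbrack$ is the leftmost unbracketed one.

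I expect this step to be the main obstacle. Rule (D2) pairs only adjacent columns, while (D3) pairs to the nearest weakly-right $\rightbrack$, so moving one vertex one column could in principle rematch nearby brackets. I would handle it by a case check over the few configurations allowed by admissibility in the columns adjacent to the window. The converse direction, for $e_i$, follows by the same comparison read in reverse.
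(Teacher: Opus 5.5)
Your first part, the pairing $1f/1e$, $2f/2e$, $3f/3e$, is the whole of the paper's proof. The paper never checks that $e_i$ picks out the vertex just created by $f_i$. You are right that this check is the real content, but your plan for it fails at the point you flag. With (D2)--(D3) as written, your claim that everything left of the window is bracketed (or is a $\leftbrack$ bracketing something to its right) is false, and so is the conclusion.

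The problem is a $\leftbrack$ on a non-trivial bump whose lower-left neighbour is not contributing. It gets no partner in (D2), and (D3) never considers it, since (D3) only treats $\leftbrack$'s on $\tt b_2$ vertices. Such a $\leftbrack$ can sit unbracketed to the left of the rightmost unbracketed $\rightbrack$. After $f_i$, it, rather than the new $\leftbrack$, can be the leftmost unbracketed one.

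Here is an example. Let $n=4$, $\lambda=(2,1)$, $i=3$, and let $S^\text{dec}$ correspond to the tableau with rows $\{1\},\{3\}$ and $\{2,4\}$. Number the columns $1,\dots,6$ from the left.
\begin{enumerate}[label=(\roman*)]
\item In lattice rows $3$ and $4$ the only contributing vertices are the $\tt b_2$ vertex of $L_1$ in row $3$, column $4$, and the non-trivial bump of $L_2$ in row $4$, column $3$. Row $3$, column $2$ is a trivial bump, so (D2) finds no partner.
\item Both symbols are therefore unbracketed. $f_3$ acts on the $\tt b_2$ vertex by Case $1f$, since its northeast neighbour is a trivial bump of $L_1$. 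It yields the state of rows $\{1\},\{4\}$ and $\{2,4\}$.
\item That state has no $\rightbrack$ at all and has $\leftbrack$'s in columns $3$ and $5$. So $e_3$ acts in column $3$ by Case $2e$ and returns the state of rows $\{1\},\{4\}$ and $\{2,3\}$, not $S^\text{dec}$.
\end{enumerate}
Under the literal rules the statement itself fails. By contrast, in the tableau (T3) cancels the $4$ in column $1$ against the $3$ in column $2$, so $\widetilde f_3$ sends this tableau to $0$.

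The missing ingredient is the reading of the bracketing under which the lemma holds. Every $\leftbrack$ not paired in (D2), including one on a non-trivial bump, must take part in the nested matching of (D3). This is how the proof of Theorem~\ref{thm:crystal_commutes_bij} phrases it (``all other $\leftbrack$''). With that reading, your comparison can be completed as follows.
\begin{enumerate}[label=(\roman*)]
\item Discard the (D2)-pairs. What remains is an ordinary bracket word: each column carries at most one $\leftbrack$, read before at most one $\rightbrack$. Its unmatched part therefore has the form $\rightbrack\cdots\rightbrack\leftbrack\cdots\leftbrack$.
\item Check that in each of $1f$, $2f$, $3f$ no (D2)-pair outside the window changes.
\item Check that the unmatched $\rightbrack$ in the column $p$ of $v$ is replaced by a $\leftbrack$ in column $p+1$ (Cases $1$ and $2$) or column $p+2$ (Case $3$), with no surviving symbol in between. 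In Case $3$ the pair $(w,u)$ is traded for the (D2)-pair $(v,t)$.
\item The standard stack argument then shows that all other matchings persist and that the new $\leftbrack$ is the leftmost unmatched one.
\item Its neighbourhood is visibly that of $1e$, $2e$, $3e$ respectively, and the reverse direction is symmetric.
\end{enumerate}
Separately, unbracketed $\rightbrack$'s can lie to the left of $v$, so that part of your description is inaccurate even under this reading, though it does no harm.
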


Let us now recall a crystal structure on set-valued tableaux defined in~\cite{Monical_2020}. It was proven in~\cite{Monical_2020} that the 
operators $\widetilde{f}_i$ and $\widetilde{e}_i$ below indeed define a $U_q(\mathfrak{sl}_n)$-crystal structure on $\SVT^n(\lambda)$.

We begin by defining the signature rule from~\cite{Monical_2020}. Fix $T\in \SVT^n(\lambda)$ and $i \in I$. 

  \begin{enumerate}
    \item [(T1)] Write $\rightbrack$ above each column of $T$ containing an $i$. Write $\leftbrack$ above each column of $T$ containing an $i+1$.
    \item [(T2)] First, cancel any $\rightbrack \; \leftbrack$ pairs if $i$ and $i+1$ are in the same column of $T$. 
    \item [(T3)] Next, successively cancel any remaining $\leftbrack \; \rightbrack$ pairs.
\end{enumerate}

    If every $\rightbrack$ cancels, then $\widetilde{f}_i(T)=0$. 
    Otherwise, let $A$ be the multicell containing the rightmost unbracketed $\rightbrack$, and let $B$ be the multicell immediately to the right of $A$. We have two cases.
    \begin{enumerate}
        \item If $i \notin B$ or $B$ does not exist, $\widetilde{f}_i(T)$ replaces the entry $i$ in $A$ with $i+1$.
        \item If $i \in B$, then $A$ (the green multicell) contains $i$, 
        and $B$ (the yellow multicell) contains both $i$ and $i+1$. In this case, $\widetilde{f}_i(T)$ removes $i$ from $B$ and adds $i+1$ to $A$. 
\begin{equation*} \text{$A,B$ locally in $T$: }
\ytableausetup{boxsize=3em}{
\begin{ytableau}
    *(green!25)i & *(yellow!35) i,\;i+1
\end{ytableau}} \qquad \qquad 
\text{$A,B$ locally in $\widetilde{f}_i(T)$: }
    \ytableausetup{boxsize=3em}{
\begin{ytableau}
    *(green!25)i, \;i+1 & *(yellow!35) i+1
\end{ytableau}}
\end{equation*}
\end{enumerate}
The action of $\widetilde{e}_i$ is analogous. If every $\leftbrack$ cancels, then $\widetilde{e}_i(T)=0$. 
Otherwise, let $B$ be the multicell containing the leftmost unbracketed $\leftbrack$, and let $A$ be the multicell immediately to the 
left of $B$. We have two cases.
    \begin{enumerate}
        \item If $i+1 \notin A$ or $A$ does not exist, $\widetilde{e}_i(T)$ replaces the entry $i+1$ in $B$ with $i$.
        \item If $i+1 \in A$, then $A$ contains both $i$ and $i+1$, and $B$ contains $i+1$. In this case, $\widetilde{e}_i(T)$ removes $i+1$ 
        from $A$ and adds $i$ to $B$. 
    \end{enumerate}

Compared to~\cite{Monical_2020}, we use the symbols by $+ \leftrightarrow \rightbrack$ and $- \leftrightarrow \leftbrack$ for the signature rule
in the description above.

\begin{theorem}\label{thm:crystal_commutes_bij}
The following diagrams commute:

    \[
\begin{tikzcd}[column sep=large,row sep=large]
	\decoratedstate_\lambda \arrow["\psi",d] \arrow[r,"f_i"] & \decoratedstate_\lambda\sqcup\{0\} \arrow[d,"\psi"] \\
	\SVT^n(\lambda) \arrow[r,"\widetilde{f}_i"]& \SVT^n(\lambda)\sqcup\{0\}
\end{tikzcd}
\qquad
\begin{tikzcd}[column sep=large,row sep=large]
	\decoratedstate_\lambda \arrow["\psi",d] \arrow[r,"e_i"] & \decoratedstate_\lambda\sqcup\{0\} \arrow[d,"\psi"] \\
	\SVT^n(\lambda) \arrow[r,"\widetilde{e}_i"]& \SVT^n(\lambda)\sqcup\{0\}
\end{tikzcd}
\]

\end{theorem}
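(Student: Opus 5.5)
The plan is to show that, under $\psi$, the lattice bracketing (D1)--(D3) is exactly the tableau bracketing (T1)--(T3). It then remains to show that each of the local moves in Cases $1f$--$3f$ (respectively $1e$--$3e$) becomes, after applying $\psi$, the corresponding action of $\widetilde f_i$ (respectively $\widetilde e_i$). Since $e_i,f_i$ are partial inverses, and so are $\widetilde e_i,\widetilde f_i$ by \cite{Monical_2020}, it suffices to treat $f_i$ in detail. The $e_i$ square then follows: if $e_i(S^\text{dec})=R^\text{dec}\neq 0$, then $f_i(R^\text{dec})=S^\text{dec}$, and the $f_i$ square gives $\widetilde e_i\psi(S^\text{dec})=\psi(R^\text{dec})$. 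The vanishing cases also match, because the numbers of unbracketed symbols will be shown to agree.

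\textbf{Step 1 (dictionary).} Using Lemma~\ref{lem:paths-to-SVTtab} and the construction of $\phi$, a contributing vertex in lattice row $i$ corresponds to an occurrence of the letter $i$ in $T=\psi(S^\text{dec})$.
\begin{itemize}
\item A $\tt b_2$ vertex on $L_r$ is the minimal entry $i$ of some cell of row $r$.
\item A non-trivial bump on $L_r$ is a crowded $i$ in row $r$. By step (iii) of $\phi$, it lies in the rightmost cell among those with entries $<i$, that is, in the cell whose minimum is the last $\tt b_2$ to the left on $L_r$. This is the grouping already used in Definition~\ref{def:lattice-reading-word}.
\end{itemize}
I will also record the column of the tableau cell attached to each contributing vertex. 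Each $\tt b_2$ step of $L_r$ advances the cell index by one, and moving from $L_r$ to $L_{r+1}$ shifts lattice columns by a fixed amount. From this I will extract a monotone correspondence: lattice columns of contributing vertices in rows $i,i+1$ are ordered compatibly with the tableau columns of the corresponding cells. The symbols $\rightbrack,\leftbrack$ of (D1) are then in bijection with those of (T1), and the left-to-right order is preserved. I will break ties among several symbols in the same tableau column by reading the corresponding multicells in decreasing order, as in Definition~\ref{def:SVT-reading}.

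\textbf{Step 2 (pairings agree).} Rule (T2) cancels an $i$ and an $i+1$ in the same tableau column. There are two possibilities.
\begin{itemize}
\item The $i+1$ is the crowded entry in the cell containing $i$, or the $i+1$ lies one row below. By the dictionary and the horizontal-strip structure, this is precisely a $\leftbrack$ on an $\tt a_2$ vertex with its $\rightbrack$ in the adjacent column (D2), as in the left picture of \Cref{fig:lattice_bracketing}.
\item The remaining (T3) cancellation of $\leftbrack\;\rightbrack$ pairs is matched by (D3), where a $\tt b_2$ $\leftbrack$ is paired with the closest $\rightbrack$ weakly to its right. Here I must check that "closest weakly right" in lattice columns agrees with successive nearest-neighbour cancellation in tableau column order. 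I expect this to follow from Step 1's monotonicity together with a greedy-matching argument.
\end{itemize}
Consequently the rightmost unbracketed $\rightbrack$ corresponds on both sides to the same occurrence of $i$.

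\textbf{Step 3 (moves agree).} I will do a case check at the vertex $v$.
\begin{itemize}
\item \textbf{Case $1f$.} Here $v$ is a $\tt b_2$ vertex, so the $i$ is the minimum of cell $A$ and $i\notin B$. Shifting the path so that $u$ becomes a trivial bump and $v$ moves up changes the GT pattern by $\lambda^{(i)}_r\mapsto\lambda^{(i)}_r-1$. This replaces $i$ by $i+1$ in $A$.
\item \textbf{Case $2f$.} Here $v$ is a crowded $i$ in $A$, and moving the decoration to $w$ turns it into an $i+1$. This is again case (1) of $\widetilde f_i$.
\item \textbf{Case $3f$.} Here the adjacent cell $B$ contains both $i$ and $i+1$ (this is the bracketed pair $w,u$). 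Shifting the path from $w$ to $u$ removes $i$ from $B$ and adds $i+1$ to $A$, which is case (2) of $\widetilde f_i$.
\end{itemize}
To verify each case I will apply $\phi$ to the modified MGT pattern, which is only a local change in two consecutive rows of the pattern.

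\textbf{Main obstacle.} The hard part is Step 2, and specifically proving that the lattice column order of the brackets reproduces the tableau column order precisely enough for (D3) to match (T3). Vertices in different natural paths have column positions that are offset, and bumps sit one column off from the cell they crowd. My first attempt will be to prove a lemma: if the cell of an $i+1$ lies weakly left of the cell of an $i$, then the lattice column of the $\leftbrack$ is weakly left of the lattice column of the $\rightbrack$, with the only exceptions being the (D2)/(T2) same-column pairs. I will prove this by induction along the natural paths.
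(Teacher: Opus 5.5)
Your architecture matches the paper's: match the signature rules under $\psi$, identify Cases $1f$/$2f$ with the first case of $\widetilde f_i$ and Case $3f$ with the second, and obtain $e_i$ from partial inverses. However, the dictionary in Step 2 is wrong in two places, and the matching you want to prove is false as written.

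\textbf{(a)} A (T2) pair whose $i+1$ sits in the cell directly below the $i$ is not a (D2) pair. That $i+1$ is the minimum of its cell, so it is a $\tt b_2$ vertex, not an $\tt a_2$ vertex. Your Step 1 bookkeeping can be made exact: a contributing vertex on $L_k$ in lattice row $j$ and lattice column $c$ (columns counted from $1$ on the left) lies in tableau column $c-j+k$. Hence the vertical $i+1$ on $L_{r+1}$ sits in the \emph{same} lattice column as its partner $\rightbrack$ on $L_r$, directly below it. It is (D3), at distance $0$, that cancels this pair. Only same-multicell pairs are (D2) pairs, and there the $\leftbrack$ is one lattice column to the right of its $\rightbrack$.

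\textbf{(b)} (T3) also cancels crowded $i+1$'s whose cell contains no $i$. These are $\tt a_2$ $\leftbrack$'s left unpaired by (D2), and (D3) restricted to $\tt b_2$ vertices never touches them. With that literal reading the statement fails. Take $n=4$, $i=3$, and $T$ with first row $1\;3$ and second row the single cell $\{2,4\}$.
\begin{itemize}
\item In $T$, the crowded $4$ (column $1$) and the $3$ (column $2$) cancel in (T3), so $\widetilde f_3(T)=\widetilde e_3(T)=0$.
\item In the lattice, the $4$ is a non-trivial bump at column $3$ of row $4$, the vertex at column $2$ of row $3$ is a trivial bump, and the $3$ is a $\tt b_2$ at column $4$ of row $3$.
\item Nothing gets paired, and Case $1f$ gives $f_3\neq 0$.
\end{itemize}
You must let (D3) pair every $\leftbrack$ that survives (D2). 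The paper's proof tacitly reads it this way (``all other $\leftbrack$'').

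Once this is fixed, your main obstacle needs no induction. For a $\rightbrack$ on $L_k$ at lattice column $c$ and a $\leftbrack$ on $L_{k'}$ at lattice column $c'$, the tableau columns differ by $(c-c')+(k-k'+1)$.
\begin{itemize}
\item For $k'=k+1$ the lattice and tableau orders coincide exactly; equality is the vertical (T2) pair.
\item For $k'=k$ they disagree only on the (D2) pair.
\item For $k'\geqslant k+2$ or $k'<k$, the horizontal-strip and column-strictness inequalities force the same strict order on both sides.
\end{itemize}
You also need that within one lattice row the lattice order agrees with the tableau order. A path further northwest is a lower tableau row, whose $i$'s lie strictly to the left. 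This guarantees the rightmost unbracketed $\rightbrack$ is the same occurrence of $i$ on both sides.

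Since each lattice site carries one vertex, a vertical pair is adjacent in lattice order. So cancelling it inside the successive cancellation, as (D3) does, agrees with cancelling it first, as (T2) does. Finally, your tie-breaking convention is unnecessary: after (T2), each tableau column carries at most one symbol.
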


\begin{proof}
Let $S^\text{dec} \in \decoratedstate_\lambda$. It follows from the bijection $\psi$ in~\eqref{eq:bij-decoratedstate-SVT} and Lemma~\ref{lem:paths-to-SVTtab} 
that the signature rule (T1)--(T3) on $\SVT^n(\lambda)$ is exactly the same as the signature rule (D1)--(D3) on $\decoratedstate_\lambda$.
Hence in particular, $f_i(S^\text{dec})=0$ if and only if $\widetilde{f}_i(\psi(S^\text{dec}))=0$. The analogous statement also holds for $e_i$ and $\widetilde{e}_i$.

Next assume that $S^\text{dec} \in \decoratedstate_\lambda$ with $f_i(S^\text{dec})\neq 0$. We will show that $\psi(f_i(S^\text{dec})) = \widetilde{f}_i(\psi(S^\text{dec}))$. The identity 
$\psi(e_i(S^\text{dec})) = \widetilde{e}_i(\psi(S^\text{dec}))$ then follows since $e_i$ and $f_i$ are partial inverses. Define $T = \psi(S^\text{dec})$ and let $A$ be the multicell 
containing the rightmost unbracketed $\rightbrack$ and $B$ be the multicell immediately to the right of $A$ if it exists. For simplicity, we say that a 
vertex of $S^\text{dec}$ of weight $z_i$ has weight $i.$ 

Note that in $S^{\text{dec}}$, bracketing a $\leftbrack$ on a non-trivial bump with a $\rightbrack$ immediately to the left, if it exists, is equivalent to bracketing $i$ and $i+1$ in the same multicell of $T.$ Furthermore, bracketing all other $\leftbrack$ with the closest $\rightbrack$ that lies in a column weakly to the right of that $\leftbrack$ is equivalent to bracketing $i$ with $i+1$ in $T$ if $i+1$ lies in a column weakly to the left of $i.$ Then the rightmost unbracketed $\rightbrack$ in $S^\text{dec}$ corresponds to the entry $i$ in $A$.

\smallskip

\noindent
\textbf{Case 1:} If $i \notin B,$ we know that $\widetilde{f}_i(T)$ will replace $i$ in $A$ with $i+1.$ If $A$ is not a multicell, that is, the only entry in $A$ is $i,$ then locally $S^\text{dec}$ will look like Case $1f.$ Then $f_i(S^\text{dec})$ will remove a $\tt b_2$ vertex with weight $i$ and add a $\tt b_2$ vertex with weight $i+1.$ See Figure~\ref{fig:case1f_commute}. If the $i$ in $A$ is an excess entry, then locally $S^\text{dec}$ will look like Case $2f.$ Then $f_i(S^\text{dec})$ will turn a bump of weight $i$ into a trivial bump and a trivial bump into a non-trivial $i+1$ bump. In either case, $f_i(S^\text{dec})$ removes a vertex of weight $i$ and adds a vertex of weight $i+1$ that corresponds to the same box $A$ in $T.$ 
\begin{figure}[H]
    \newcommand{\latticebox}[1]{\makebox[5.8cm][c]{#1}}
\begin{center}\scalebox{0.8}{\begin{tikzcd}[
  ampersand replacement=\&,
  column sep=large,
  row sep=large,
  every label/.append style={font=\large}
]
\latticebox{
\begin{tikzpicture}
     \draw (1,0)--(1,3);
    \draw (2,0)--(2,3);
    \draw (3,0)--(3,3);
    \draw (0,1)--(4,1);
    \draw (0,2)--(4,2);

    \draw[-{Stealth[length=3mm,width=2mm]},ForestGreen,line width=3pt](0,1)--(1,1);
    \draw[-{Stealth[length=3mm,width=2mm]},ForestGreen,line width=3pt] (1,1)--(2,1);
    \draw[-{Stealth[length=3mm,width=2mm]},ForestGreen,line width=3pt] (2,1)--(2,2);
    \draw[-{Stealth[length=3mm,width=2mm]},ForestGreen,line width=3pt] (2,2)--(3,2);
    \draw[-{Stealth[length=3mm,width=2mm]},ForestGreen,line width=3pt] (3,2)--(3,3);

    \draw[dotted][-{Stealth[length=3mm,width=2mm]},orange,line width=2.5pt] (0,2)--(1,2);
    \draw[dotted][-{Stealth[length=3mm,width=2mm]},orange,line width=2.5pt] (1,2)--(1,3);

    \node at (1.1,0.8) {\Huge $\boldsymbol{\rightbrack}$};
    \node[cross out, draw = violet, ultra thick, minimum size=10pt] at (2,2){};
    
    \node at (-.5,1) {$ z_i$ };
    \node at (-.5,2) {$ z_{i+1}$ };
    \node at (0.8,1.2) {$v$};
    \node at (0.7,2.3) {$u$};
    \node at (1.7,2.3) {$w$};
\end{tikzpicture}
}
\arrow[d,"\psi"']
\arrow[r,shift left=13,"f_i"]
\&
\latticebox{
\begin{tikzpicture}
     \draw (1,0)--(1,3);
    \draw (2,0)--(2,3);
    \draw (3,0)--(3,3);
    \draw (0,1)--(4,1);
    \draw (0,2)--(4,2);

    \draw[-{Stealth[length=3mm,width=2mm]},ForestGreen,line width=3pt](0,1)--(1,1);
    \draw[-{Stealth[length=3mm,width=2mm]},ForestGreen,line width=3pt] (1,1)--(1,2);
    \draw[-{Stealth[length=3mm,width=2mm]},ForestGreen,line width=3pt] (1,2)--(2,2);
    \draw[-{Stealth[length=3mm,width=2mm]},ForestGreen,line width=3pt] (2,2)--(3,2);
    \draw[-{Stealth[length=3mm,width=2mm]},ForestGreen,line width=3pt] (3,2)--(3,3);
    
    \draw[dotted][-{Stealth[length=3mm,width=2mm]},orange,line width=2.5pt] (0,2)--(1,2);
    \draw[dotted][-{Stealth[length=3mm,width=2mm]},orange,line width=2.5pt] (1,2)--(1,3);

    \node[cross out, draw = violet, ultra thick, minimum size=10pt, inner sep=2pt] at (1,2){};
    \node[circle, draw = violet, fill=violet, opacity = 0.4,ultra thick, minimum size=7pt, inner sep=2pt] at (2,2){};
    \node at (-.5,1) {$ z_i$ };
    \node at (-.5,2) {$ z_{i+1}$ };
    \node at (0.8,1.2) {$v$};
    \node at (0.8,2.3) {$u$};
    \node at (1.7,2.3) {$w$};
\end{tikzpicture}
}
\arrow[d,"\psi"]
\\ 
\latticebox{
\ytableausetup{boxsize=3.7em}
\begin{ytableau}
    *(green!25)i, \;\dots & *(yellow!35) j
\end{ytableau}
}
\arrow[r,shift left=5,"\widetilde{f}_i"]
\&
\latticebox{
\ytableausetup{boxsize=3.7em}
\begin{ytableau}
    *(green!25)i+1,\;\dots & *(yellow!35) j
\end{ytableau}
}
\end{tikzcd}}\end{center}
    \caption{The local structure of the commutative diagram in Case 1f.}
    \label{fig:case1f_commute}
    \commentAlt{Figure~\ref{fig:case1f_commute}. Square commutative diagram where the first row consists of decorated states while the second row shows the two-box local section of the corresponding set-valued tableaux.}
\end{figure}

\noindent
\textbf{Case 2:} If $i\in B,$ then $\widetilde{f}_i$ will remove $i$ from $B$ and add $i+1$ to $A.$ Then locally, $S^\text{dec}$ will look like Case $3f$ where the paired bump and $\rightbrack$ immediately to the left of it correspond to the $i$ and $i+1$ in $B$ of $T.$ Then $f_i(S^\text{dec})$ will shift the paired bump and $\tt b_2$ vertex to the left and create a vertex of weight $i+1$ after this bump. By shifting the path, a vertex of weight $i$ gets removed. This is equivalent to removing $i$ from $B$ in $T$ and adding an $i+1$ to $A.$ See Figure~\ref{fig:case3f_commute} for example.

In both cases, we have that $\psi(f_i(S^\text{dec})) = \widetilde{f}_i(\psi(S^\text{dec})).$ Since $f_i$ and $e_i$ are partial inverses, we also conclude that $\psi(e_i(S^\text{dec})) = \widetilde{e}_i(\psi(S^\text{dec})).$
\end{proof}

\begin{figure}[hbpt!]
    \centering
    \newcommand{\latticebox}[1]{\makebox[5.8cm][c]{#1}}
\scalebox{0.8}{\begin{tikzcd}[
  ampersand replacement=\&,
  column sep=large,
  row sep=large,
  every label/.append style={font=\large}
]
\latticebox{
\begin{tikzpicture}
    \draw (1,0)--(1,3);
    \draw (2,0)--(2,3);
    \draw (3,0)--(3,3);
    \draw (0,1)--(4,1);
    \draw (0,2)--(4,2);

    \draw[-{Stealth[length=3mm,width=2mm]},ForestGreen,line width=3pt] (1,1)--(2,1);
    \draw[-{Stealth[length=3mm,width=2mm]},ForestGreen,line width=3pt] (2,1)--(3,1);
    \draw[-{Stealth[length=3mm,width=2mm]},ForestGreen,line width=3pt] (3,1)--(3,2);
    \draw[-{Stealth[length=3mm,width=2mm]},ForestGreen,line width=3pt] (3,2)--(4,2);

    \draw[dotted][-{Stealth[length=3mm,width=2mm]},orange,line width=2.5pt] (0,2)--(1,2);
    \draw[dotted][-{Stealth[length=3mm,width=2mm]},orange,line width=2.5pt] (1,2)--(1,3);
    \draw[dotted][-{Stealth[length=3mm,width=2mm]},ForestGreen,line width=2.5pt](0,1)--(1,1);
     \draw[dotted][-{Stealth[length=3mm,width=2mm]},ForestGreen,line width=2.5pt](1,0)--(1,1);

    \node at (1.1,0.8) {\Huge $\boldsymbol{\rightbrack}$};
    \node at (2.1,0.8) {\Huge $\boldsymbol{\rightbrack}$};
    \node at (3.1,1.8) {\Huge $\boldsymbol{\leftbrack}$};
    \node at (3.5,2) {\Huge $)$};
    \node at (1.6,1) {\Huge $($};
    
    \node at (-.5,1) {$ z_i$ };
    \node at (-.5,2) {$ z_{i+1}$ };
    \node at (0.8,1.2) {$v$};
    \node at (2.8,2.2) {$u$};
    \node at (1.8,1.2) {$w$};
\end{tikzpicture}
}
\arrow[d,"\psi"']
\arrow[r,shift left=13,"f_i"]
\&
\latticebox{
\begin{tikzpicture}
      \draw (1,0)--(1,3);
    \draw (2,0)--(2,3);
    \draw (3,0)--(3,3);
    \draw (0,1)--(4,1);
    \draw (0,2)--(4,2);

     \draw[dotted][-{Stealth[length=3mm,width=2mm]},ForestGreen,line width=2.5pt](0,1)--(1,1);
     \draw[dotted][-{Stealth[length=3mm,width=2mm]},ForestGreen,line width=2.5pt](1,0)--(1,1);
    \draw[-{Stealth[length=3mm,width=2mm]},ForestGreen,line width=3pt] (1,1)--(2,1);
    \draw[-{Stealth[length=3mm,width=2mm]},ForestGreen,line width=3pt] (2,1)--(2,2);
    \draw[-{Stealth[length=3mm,width=2mm]},ForestGreen,line width=3pt] (2,2)--(3,2);
    \draw[-{Stealth[length=3mm,width=2mm]},ForestGreen,line width=3pt] (3,2)--(4,2);
    
    \draw[dotted][-{Stealth[length=3mm,width=2mm]},orange,line width=2.5pt] (0,2)--(1,2);
    \draw[dotted][-{Stealth[length=3mm,width=2mm]},orange,line width=2.5pt] (1,2)--(1,3);

    \node[circle, draw = violet, ultra thick, minimum size=10pt] at (2,2){};
    \node at (-.5,1) {$ z_i$ };
    \node at (-.5,2) {$ z_{i+1}$ };
    \node at (0.8,1.2) {$v$};
    \node at (2.8,2.2) {$u$};
    \node at (1.8,1.2) {$w$};

    \node[circle, draw = violet, fill=violet, opacity = 0.4,ultra thick, minimum size=7pt, inner sep=2pt] at (3,2){};
    \node[circle, draw = violet, fill=violet, opacity = 0.4,ultra thick, minimum size=7pt, inner sep=2pt] at (1,1){};
\end{tikzpicture}
}
\arrow[d,"\psi"]
\\ 
\latticebox{
\ytableausetup{boxsize=4.7em}
\begin{ytableau}
    *(green!25)i & *(yellow!35) i, i+1, \;\dots
\end{ytableau}
}
\arrow[r,shift left=5,"\widetilde{f}_i"]
\&
\latticebox{
\ytableausetup{boxsize=4.7em}
\begin{ytableau}
    *(green!25)i,i+1 & *(yellow!35) i+1, \;\dots
\end{ytableau}
}
\end{tikzcd}}
    \caption{The local structure of the commutative diagram in Case 3f.}
    \label{fig:case3f_commute}
    \commentAlt{Figure~\ref{fig:case3f_commute}. Square commutative diagram where the first row consists of decorated states while the second row shows the two-box local section of the corresponding set-valued tableaux.}
\end{figure}

For $S^\text{dec} \in \decoratedstate_\lambda,$ we define $\wt(S^\text{dec}) = \sum_{i=1}^{n} c_i\mathbf{e}_i$ where $c_i$ is the number of contributing vertices with weight 
$z_i.$ It is clear that $\wt(f_i(S^\text{dec})) = \wt(S^\text{dec}) -\alpha_i$.

Note that $\psi$ is weight preserving, that is, $\wt(\psi(S^\text{dec})) = \wt(S^\text{dec}),$ and it is easy to see that the number of left and right brackets of $S^\text{dec}$ correspond to those of $\psi(S^\text{dec}).$ Therefore $\psi$ is a crystal isomorphism, giving us the following corollary.

\begin{corollary}
    The crystal on $\decoratedstate_\lambda$ is isomorphic to the crystal on $\SVT^n(\lambda).$
\end{corollary}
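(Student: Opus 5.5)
The corollary should follow by transporting structure along $\psi$. The plan is to show that $\psi\colon \decoratedstate_\lambda \to \SVT^n(\lambda)$ is a bijective crystal morphism. Since \cite{Monical_2020} proves that $(\SVT^n(\lambda),\widetilde{e}_i,\widetilde{f}_i,\wt)$ is a $U_q(\mathfrak{sl}_n)$-crystal, the lattice operators will then automatically satisfy the crystal axioms (1)--(3), and $\psi$ will be an isomorphism.

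Bijectivity of $\psi$ holds by construction: it is the composite of $\widetilde{\mathfrak P}$ and $\phi$ in \eqref{eq:bij-decoratedstate-SVT}. For weights, each contributing vertex in lattice row $i$ is either a $\tt b_2$ vertex, giving an entry $i$ (the strip $\lambda^{(i)}/\lambda^{(i-1)}$), or a non-trivial bump, giving a marking in MGT row $i$ and hence a crowded entry $i$. So $\wt(\psi(S^\text{dec}))=\wt(S^\text{dec})$ follows directly from the definition of $\phi$.

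The operators intertwine by Theorem~\ref{thm:crystal_commutes_bij}: $\psi\circ f_i=\widetilde f_i\circ\psi$ and $\psi\circ e_i=\widetilde e_i\circ\psi$ as maps into $\SVT^n(\lambda)\sqcup\{0\}$, including the vanishing cases. Consequently $f_i^k(S^\text{dec})\neq 0$ if and only if $\widetilde f_i^k(\psi(S^\text{dec}))\neq0$, by induction on $k$ using the diagram and injectivity of $\psi$. The same holds for $e_i$, so $\varphi_i$ and $\varepsilon_i$ are preserved. Alternatively, one can match the unbracketed $\rightbrack$ and $\leftbrack$ counts under (D1)--(D3) and (T1)--(T3), as noted in the proof of that theorem.

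It then remains to check that the axioms hold on $\decoratedstate_\lambda$. Pull each axiom back through $\psi$. For (1), $f_i(x)=y$ iff $\widetilde f_i(\psi x)=\psi y$ iff $\widetilde e_i(\psi y)=\psi x$ iff $e_i(y)=x$. For (2), use weight preservation. For (3), use the identities $\varphi_i=\varphi_i\circ\psi$, $\varepsilon_i=\varepsilon_i\circ\psi$ and $\wt=\wt\circ\psi$. The only step requiring care is the string-length preservation, specifically making sure the commuting squares hold on the nose when one side is $0$. That case is exactly the first paragraph of the proof of Theorem~\ref{thm:crystal_commutes_bij}, so I expect no real obstacle.
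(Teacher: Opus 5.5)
Your proposal is correct and follows essentially the same route as the paper: $\psi$ is a weight-preserving bijection that intertwines $e_i,f_i$ with $\widetilde e_i,\widetilde f_i$ by Theorem~\ref{thm:crystal_commutes_bij}, and is therefore a crystal isomorphism. The differences are only bookkeeping. The paper gets preservation of $\varepsilon_i,\varphi_i$ from the matching of bracket counts rather than by iterating the commuting squares. It also leaves implicit the pullback of the crystal axioms to $\decoratedstate_\lambda$ that you spell out.
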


\begin{example}
    We display two isomorphic crystals, $\SVT^3(2,1,1)$ and $\decoratedstate_{(2,1,1)}$, in \Cref{fig:svt-crystal-example} and \Cref{fig:lattice-crystal-example}. 
\end{example}

\subsection{Crystal operators and uncrowding}

By~\cite[Theorem~3.12]{Monical_2020}, Buch's uncrowding map 
\[T \mapsto (P_\SVT(T), F_\SVT(T))\]
yields a crystal isomorphism from $\SVT^n(\lambda)$ onto a disjoint union of crystals on semistandard Young tableaux. In particular, the 
crystal operators intertwine with the uncrowding map, and hence
\[P_\SVT(\widetilde{f}_i (T))=\widetilde{f}_i(P_\SVT(T)),\]
with an analogous identity for $\widetilde{e}_i$.

\begin{theorem}
    Let $S^\text{dec} \in \decoratedstate_\lambda$ and fix $i \in I$. Then the lattice uncrowding map intertwines with the lattice crystal operators:
    \[f_i \circ \U_\LAT = \U_\LAT \circ f_i \qquad \text{and} \qquad e_i \circ \U_\LAT = \U_\LAT \circ e_i.\]
    Equivalently, 
    \begin{enumerate}
        \item If $f_i(S^\text{dec})=0$, then $f_i(\U_\LAT(S^\text{dec}))=0.$
        \item If $e_i(S^\text{dec})=0$, then $e_i(\U_\LAT(S^\text{dec}))=0.$
        \item If $f_i(S^\text{dec})\neq 0$, then $f_i(\U_\LAT(S^\text{dec}))=\U_\LAT(f_i(S^\text{dec})).$
        \item If $e_i(S^\text{dec})\neq 0$, then $e_i(\U_\LAT(S^\text{dec}))=\U_\LAT(e_i(S^\text{dec})).$
    \end{enumerate}
\end{theorem}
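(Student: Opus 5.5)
The plan is to transport everything to set-valued tableaux through $\psi$ and use the known tableau-level intertwining result. The tools are Theorem~\ref{thm:lattice_uncrowding_algorithm}, which gives $\psi\circ\U_\LAT = P_\SVT\circ\psi$, and Theorem~\ref{thm:crystal_commutes_bij}, which gives $\psi\circ f_i=\widetilde f_i\circ\psi$ and $\psi\circ e_i=\widetilde e_i\circ\psi$. We apply the latter both on $\decoratedstate_\lambda$ and on each $\decoratedstate^0_\mu\subseteq\decoratedstate_\mu$. On trivial decorated states (no non-trivial bumps), the operators $f_i,e_i$ restricted to $\decoratedstate^0_\mu$ correspond under $\psi$ to the operators $\widetilde f_i,\widetilde e_i$ on $\SSYT^n(\mu)\subseteq\SVT^n(\mu)$. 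Those are exactly the usual tableau crystal operators, since the signature rule (T1)--(T3) with no multicells reduces to the standard bracketing. We also note that Cases $2f$, $3f$, $2e$, $3e$ cannot occur without non-trivial bumps, so $f_i$ and $e_i$ preserve $\decoratedstate^0_\mu\sqcup\{0\}$.

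Next we use~\cite[Theorem~3.12]{Monical_2020}. It says $T\mapsto(P_\SVT(T),F_\SVT(T))$ is a crystal isomorphism onto $\bigsqcup_\mu \SSYT^n(\mu)\times\mathcal F(\mu/\lambda)$, where the crystal operators act only on the first factor. Consequently $\widetilde f_i(T)=0$ if and only if $\widetilde f_i(P_\SVT(T))=0$, and otherwise $P_\SVT(\widetilde f_i(T))=\widetilde f_i(P_\SVT(T))$; the same holds for $\widetilde e_i$. The "if and only if" comes from the isomorphism preserving string lengths $\varphi_i,\varepsilon_i$. This yields both the vanishing statements (1),(2) and the equalities (3),(4).

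Then we chase the diagram. For (3), set $T=\psi(S^\text{dec})$. We compute
\[
\psi\bigl(f_i(\U_\LAT(S^\text{dec}))\bigr)=\widetilde f_i\bigl(\psi(\U_\LAT(S^\text{dec}))\bigr)=\widetilde f_i(P_\SVT(T))=P_\SVT(\widetilde f_i(T))=P_\SVT(\psi(f_i(S^\text{dec})))=\psi\bigl(\U_\LAT(f_i(S^\text{dec}))\bigr).
\]
Injectivity of $\psi$ on $\decoratedstate^0_\mu$ then gives the claim. For (1), $f_i(S^\text{dec})=0$ gives $\widetilde f_i(T)=0$, hence $\widetilde f_i(P_\SVT(T))=0$, hence $f_i(\U_\LAT(S^\text{dec}))=0$, using Theorem~\ref{thm:crystal_commutes_bij} in the "zero iff zero" form stated in its proof. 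Statements (2) and (4) follow in the same way, or from (1) and (3) because $e_i$ and $f_i$ are partial inverses.

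The main obstacle is bookkeeping rather than combinatorics. One point is that Theorem~\ref{thm:crystal_commutes_bij} must be applied for the shape $\mu$ with the extra column convention of the enlarged grid; its proof is uniform in the shape, so it applies. The other point is that we need the full crystal-isomorphism statement of~\cite{Monical_2020}, including that $P_\SVT$ sends $0$ to $0$ exactly, and not just that it commutes where both sides are nonzero. I would state this explicitly, citing that a crystal isomorphism preserves $\varepsilon_i$ and $\varphi_i$, before running the chase.
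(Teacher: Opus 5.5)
Your proposal is correct and follows the paper's proof. Both transport everything through $\psi$ using Theorem~\ref{thm:lattice_uncrowding_algorithm}, Theorem~\ref{thm:crystal_commutes_bij}, and \cite[Theorem~3.12]{Monical_2020}, then conclude by injectivity of $\psi$; your explicit appeal to preservation of $\varepsilon_i,\varphi_i$ for the zero cases spells out what the paper packs into extending each map by $0\mapsto 0$.
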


\begin{proof}
    By Theorem~\ref{thm:crystal_commutes_bij}, Theorem~\ref{thm:lattice_uncrowding_algorithm}, and~\cite[Theorem~3.12]{Monical_2020}, the 
    diagram below commutes, where each map is extended by sending 0 to 0.

    \[
\begin{tikzcd}[column sep=large,row sep=large]
\decoratedstate_\lambda
\arrow[r,"f_i"]
\arrow[d,"\psi"']
\arrow[ddd,bend right=60,"\U_\LAT"']
&
\decoratedstate_\lambda\sqcup\{0\}
\arrow[d,"\psi"]
\arrow[ddd,bend left=60,"\U_\LAT"]
\\
\SVT^n(\lambda)
\arrow[r,"\widetilde{f}_i"']
\arrow[d,"P_\SVT"']
&
\SVT^n(\lambda)\sqcup\{0\}
\arrow[d,"P_\SVT"]
\\
\displaystyle{\bigsqcup_{\mu\supseteq\lambda}\SSYT^n(\mu)}
\arrow[r,"\widetilde{f}_i"']
&
\displaystyle{\bigsqcup_{\mu\supseteq\lambda} \SSYT^n(\mu)\sqcup\{0\}}
\\
\displaystyle{\bigsqcup_{\mu\supseteq\lambda}\decoratedstate^0_\mu}
\arrow[u,"\psi"]
\arrow[r,"f_i"']
&
\displaystyle{\bigsqcup_{\mu\supseteq\lambda}\decoratedstate^0_\mu\sqcup\{0\}}
\arrow[u,"\psi"']
\end{tikzcd}
\]

Hence, $\psi(f_i(\U_\LAT(S^\text{dec})))=\psi(\U_\LAT(f_i(S^\text{dec})))$.
Since $\psi$ is injective, we obtain 
\[
	f_i(\U_\LAT(S^\text{dec}))=\U_\LAT(f_i(S^\text{dec})).
\]
If $f_i(S^\text{dec})\neq 0$, this is exactly the identity above. 
If $f_i(S^\text{dec})=0$, this becomes $f_i(\U_\LAT(S^\text{dec}))=\U_\LAT(0)=0$.

The proof for $e_i$ is analogous.
\end{proof}

\section{Generalizations and discussion}
\label{sec:future_work}

Let us conclude with a discussion of several potential generalizations of this work.

The symmetric Grothendieck polynomials have been generalized in multiple ways. 
A refined version of symmetric Grothendieck polynomials $G_\lambda(\mathbf{z};\boldsymbol{\beta})$, where
$\boldsymbol{\beta} = (\beta_1,\beta_2,\ldots)$, was introduced by Chan and Pflueger~\cite{Chan_2021} with applications to 
Brill--Noether varieties from algebraic geometry, inspired by work of Galashin, Grinberg, and Liu~\cite{GGL.2016} who refined
the parameters for dual Grothendieck polynomials. In a separate direction, Yeliussizov~\cite{Yeliussizov.2017} 
defined the canonical Grothendieck polynomials $G_\lambda(\mathbf{z}; \alpha,\beta)$ inspired by symmetries
under the $\omega$ involution on symmetric functions. Unifying both of these directions, Hwang, Jang, Kim, Song, and Song~\cite{HJKSS1,HJKSS2} 
introduced and studied refined canonical stable Grothendieck polynomials. A free fermionic model for the refined canonical Grothendieck
polynomials was given by Iwao, Motegi, and Scrimshaw~\cite{Iwao2024}, while Gunna and Zinn--Justin~\cite{MR4552711} gave a 
vertex model. The tableaux underlying the (refined) canonical Grothendieck polynomials are hook-valued tableaux, replacing the 
set-valued tableaux for the symmetric Grothendieck polynomials. Uncrowding operators for hook-valued tableaux analogous to those in
Definition~\ref{def:uncrowding-SVT} were introduced in~\cite{pan_uncrowding_2022,JKPPS.2026}. It would be interesting to generalize the analysis
of the current paper and give a lattice model interpretation of the uncrowding algorithm on hook-valued tableaux.

While the symmetric Grothendieck polynomials are $K$-theoretic analogues of the Schur polynomials indexed by partitions, Grothendieck polynomials
are $K$-theoretic generalizations of Schubert polynomials indexed by permutations. There are two prominent lattice model interpretations 
of Schubert polynomials: the classical pipe dream model~\cite{BergeronBilley.1993,KM.2005} and the more recent bumpless 
pipe dreams~\cite{LLS.2021}. The frozen pipe dream model of~\cite{BFHTW.2023} is a lattice model for Grothendieck polynomials generalizing the
classical pipe dream model for Schubert polynomials. Bumpless pipe dream models for Grothendieck polynomials are given by Weigandt~\cite{WeigandtASM} and Buciumas and Scrimshaw~\cite{BuciumasScrimshawGrothendieck}. Again, it would be interesting to extend the analysis of this paper to these 
settings. In particular, it would be desirable to define crystal operators directly on the lattice model. While this was done in the Schubert
case on classical pipe dreams~\cite{Lenart.2004,GMS.2024a} and equivalently on compatible sequences (or decreasing factorizations of reduced
words of the permutation)~\cite{MS.2016}, in the Grothendieck setting this is currently only known for 321-avoiding permutations~\cite{MPPS.2020}.
Uncrowding in this setting is related to the $\star$-insertion~\cite{MPPS.2020} and Hecke insertion~\cite{BKSTY.2008}.
The lattice models in this case are 6-vertex models and hence are beyond the scope of this paper.

\begin{figure}[H]
    \centering
    \includegraphics[scale=0.95]{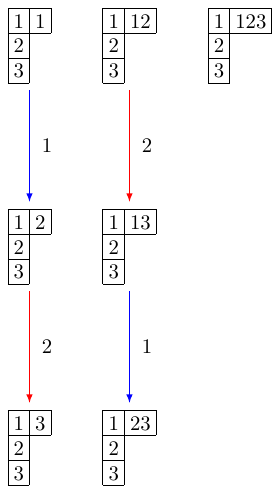}
    \caption{The crystal on $\SVT^3(2,1,1)$.}
    \label{fig:svt-crystal-example}
    \commentAlt{Figure~\ref{fig:svt-crystal-example}. The crystal contains three connected components. The first two components consist of a string of three set-valued tableaux while the last component has one singular tableau.}
\end{figure}
\begin{figure}[H]
    \centering
\includegraphics[scale=0.6]{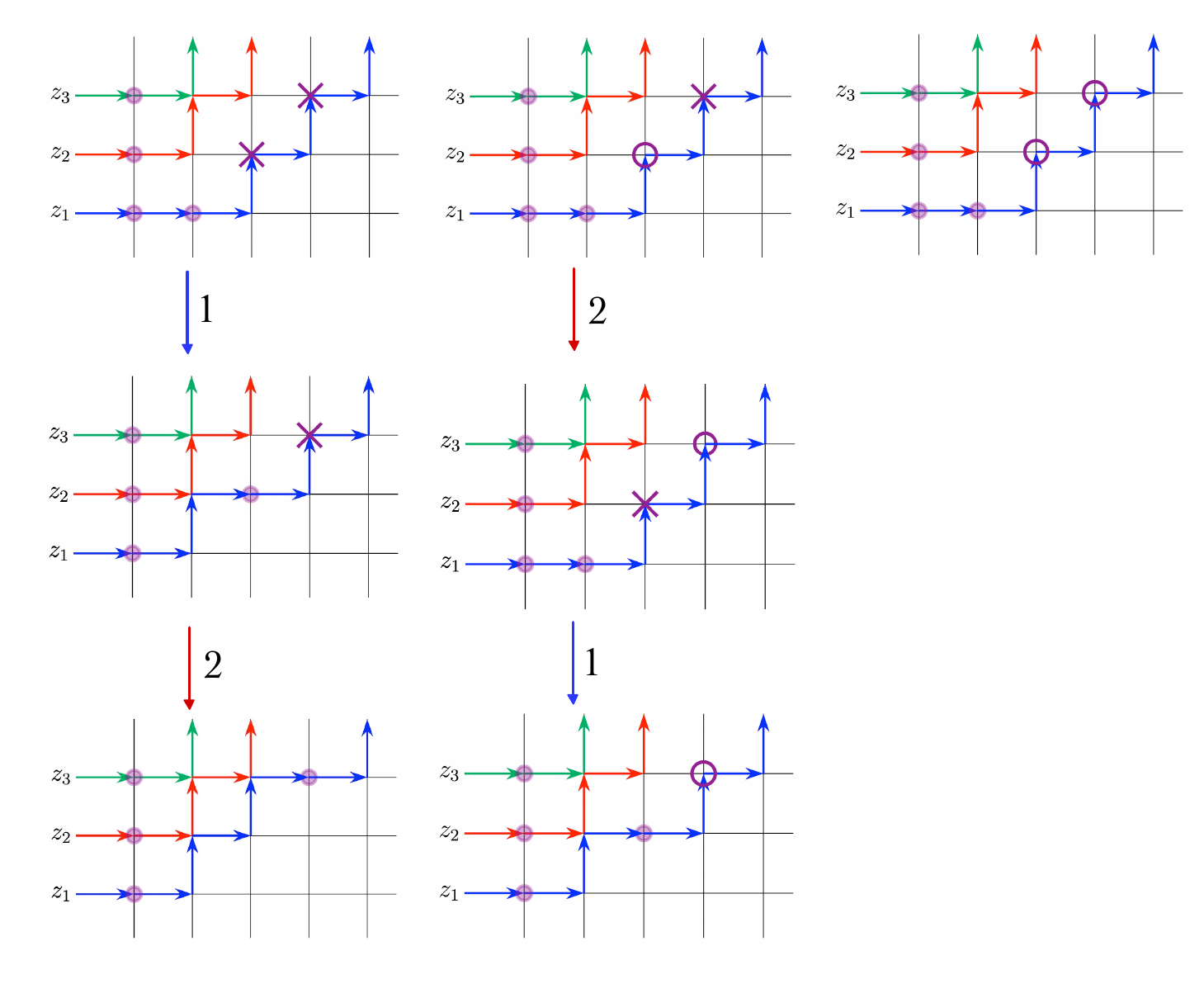}
    \caption{The crystal on $\decoratedstate_{(2,1,1)}$ for $n=3$.}
    \label{fig:lattice-crystal-example}
    \commentAlt{Figure~\ref{fig:lattice-crystal-example}. The crystal contains three connected components corresponding to the tableaux in Figure~\ref{fig:svt-crystal-example}.}
\end{figure}

\bibliography{refs}

@article{Buciumas_2020,
    AUTHOR = {Buciumas, Valentin and Scrimshaw, Travis and Weber, Katherine},
     TITLE = {Colored five-vertex models and {L}ascoux polynomials and
              atoms},
   JOURNAL = {J. Lond. Math. Soc. (2)},
  FJOURNAL = {Journal of the London Mathematical Society. Second Series},
    VOLUME = {102},
      YEAR = {2020},
    NUMBER = {3},
     PAGES = {1047--1066},
      ISSN = {0024-6107,1469-7750},
   MRCLASS = {05A19 (05E05 14M15 82B23)},
  MRNUMBER = {4186121},
MRREVIEWER = {Eric\ S.\ Egge},
       DOI = {10.1112/jlms.12347},
       URL = {https://doi.org/10.1112/jlms.12347},
}

@article{Monical_2020,
    AUTHOR = {Monical, Cara and Pechenik, Oliver and Scrimshaw, Travis},
     TITLE = {Crystal structures for symmetric {G}rothendieck polynomials},
   JOURNAL = {Transform. Groups},
  FJOURNAL = {Transformation Groups},
    VOLUME = {26},
      YEAR = {2021},
    NUMBER = {3},
     PAGES = {1025--1075},
      ISSN = {1083-4362,1531-586X},
   MRCLASS = {05E05 (14N15 19D99)},
  MRNUMBER = {4309558},
       DOI = {10.1007/s00031-020-09623-y},
       URL = {https://doi.org/10.1007/s00031-020-09623-y},
}

@article{Buch_2002,
    AUTHOR = {Buch, Anders Skovsted},
     TITLE = {A {L}ittlewood-{R}ichardson rule for the {$K$}-theory of
              {G}rassmannians},
   JOURNAL = {Acta Math.},
  FJOURNAL = {Acta Mathematica},
    VOLUME = {189},
      YEAR = {2002},
    NUMBER = {1},
     PAGES = {37--78},
      ISSN = {0001-5962,1871-2509},
   MRCLASS = {14M15 (05E05 05E15)},
  MRNUMBER = {1946917},
MRREVIEWER = {Frank\ Sottile},
       DOI = {10.1007/BF02392644},
       URL = {https://doi.org/10.1007/BF02392644},
}

@article{Motegi_2013,
    AUTHOR = {Motegi, Kohei and Sakai, Kazumitsu},
     TITLE = {Vertex models, {TASEP} and {G}rothendieck polynomials},
   JOURNAL = {J. Phys. A},
  FJOURNAL = {Journal of Physics. A. Mathematical and Theoretical},
    VOLUME = {46},
      YEAR = {2013},
    NUMBER = {35},
     PAGES = {355201, 26},
      ISSN = {1751-8113,1751-8121},
   MRCLASS = {81R12},
  MRNUMBER = {3100873},
       DOI = {10.1088/1751-8113/46/35/355201},
       URL = {https://doi.org/10.1088/1751-8113/46/35/355201},
}

@article{Len_00,
    AUTHOR = {Lenart, Cristian},
     TITLE = {Combinatorial aspects of the {$K$}-theory of {G}rassmannians},
   JOURNAL = {Ann. Comb.},
  FJOURNAL = {Annals of Combinatorics},
    VOLUME = {4},
      YEAR = {2000},
    NUMBER = {1},
     PAGES = {67--82},
      ISSN = {0218-0006,0219-3094},
   MRCLASS = {05E15 (14M15)},
  MRNUMBER = {1763950},
MRREVIEWER = {Ang\`ele\ M.\ Hamel},
       DOI = {10.1007/PL00001276},
       URL = {https://doi.org/10.1007/PL00001276},
}

@article{Bandlow_2012,
    AUTHOR = {Bandlow, Jason and Morse, Jennifer},
     TITLE = {Combinatorial expansions in {$K$}-theoretic bases},
   JOURNAL = {Electron. J. Combin.},
  FJOURNAL = {Electronic Journal of Combinatorics},
    VOLUME = {19},
      YEAR = {2012},
    NUMBER = {4},
     PAGES = {Paper 39, 27},
      ISSN = {1077-8926},
   MRCLASS = {05E05 (05A19 19E20)},
  MRNUMBER = {3007174},
       DOI = {10.37236/2320},
       URL = {https://doi.org/10.37236/2320},
}

@article{Reiner_2018,
title = {Poset edge densities, nearly reduced words, and barely set-valued tableaux},
journal = {Journal of Combinatorial Theory, Series A},
volume = {158},
pages = {66-125},
year = {2018},
issn = {0097-3165},
doi = {https://doi.org/10.1016/j.jcta.2018.03.010},
url = {https://www.sciencedirect.com/science/article/pii/S0097316518300372},
author = {Victor Reiner and Bridget Eileen Tenner and Alexander Yong}
}

@article{Patrias_2016,
    AUTHOR = {Patrias, Rebecca},
     TITLE = {Antipode formulas for some combinatorial {H}opf algebras},
   JOURNAL = {Electron. J. Combin.},
  FJOURNAL = {Electronic Journal of Combinatorics},
    VOLUME = {23},
      YEAR = {2016},
    NUMBER = {4},
     PAGES = {Paper 4.30, 32},
      ISSN = {1077-8926},
   MRCLASS = {05E05 (06A07 16T30)},
  MRNUMBER = {3604788},
MRREVIEWER = {Philip\ B.\ Zhang},
       DOI = {10.37236/5949},
       URL = {https://doi.org/10.37236/5949},
}

@article{Chan_2021,
    AUTHOR = {Chan, Melody and Pflueger, Nathan},
     TITLE = {Combinatorial relations on skew {S}chur and skew stable
              {G}rothendieck polynomials},
   JOURNAL = {Algebr. Comb.},
  FJOURNAL = {Algebraic Combinatorics},
    VOLUME = {4},
      YEAR = {2021},
    NUMBER = {1},
     PAGES = {175--188},
      ISSN = {2589-5486},
   MRCLASS = {05E05},
  MRNUMBER = {4226561},
MRREVIEWER = {Sho\ Matsumoto},
       DOI = {10.5802/alco.144},
       URL = {https://doi.org/10.5802/alco.144},
}

@article{pan_uncrowding_2022,
    AUTHOR = {Pan, Jianping and Pappe, Joseph and Poh, Wencin and Schilling,
              Anne},
     TITLE = {Uncrowding algorithm for hook-valued tableaux},
   JOURNAL = {Ann. Comb.},
  FJOURNAL = {Annals of Combinatorics},
    VOLUME = {26},
      YEAR = {2022},
    NUMBER = {1},
     PAGES = {261--301},
      ISSN = {0218-0006,0219-3094},
   MRCLASS = {05E05 (05E10 14N10 14N15 20G42)},
  MRNUMBER = {4407062},
       DOI = {10.1007/s00026-022-00567-6},
       URL = {https://doi.org/10.1007/s00026-022-00567-6},
}

@article {LS.1982,
    AUTHOR = {Lascoux, Alain and Sch\"{u}tzenberger, Marcel-Paul},
     TITLE = {Structure de {H}opf de l'anneau de cohomologie et de l'anneau
              de {G}rothendieck d'une vari\'{e}t\'{e} de drapeaux},
   JOURNAL = {C. R. Acad. Sci. Paris S\'{e}r. I Math.},
  FJOURNAL = {Comptes Rendus des S\'{e}ances de l'Acad\'{e}mie des Sciences. S\'{e}rie
              I. Math\'{e}matique},
    VOLUME = {295},
      YEAR = {1982},
    NUMBER = {11},
     PAGES = {629--633},
      ISSN = {0249-6291},
   MRCLASS = {14M17},
  MRNUMBER = {686357},
}

@incollection {FK.1994,
    AUTHOR = {Fomin, Sergey and Kirillov, Anatol N.},
     TITLE = {Grothendieck polynomials and the {Y}ang-{B}axter equation},
 BOOKTITLE = {Formal power series and algebraic combinatorics/{S}\'{e}ries
              formelles et combinatoire alg\'{e}brique},
     PAGES = {183--189},
 PUBLISHER = {DIMACS, Piscataway, NJ},
      YEAR = {1994},
   MRCLASS = {05E05 (14N15)},
  MRNUMBER = {2307216},
}

@article {Yeliussizov.2017,
    AUTHOR = {Yeliussizov, Damir},
     TITLE = {Duality and deformations of stable {G}rothendieck polynomials},
   JOURNAL = {J. Algebraic Combin.},
  FJOURNAL = {Journal of Algebraic Combinatorics. An International Journal},
    VOLUME = {45},
      YEAR = {2017},
    NUMBER = {1},
     PAGES = {295--344},
      ISSN = {0925-9899},
   MRCLASS = {05E05 (05A17)},
  MRNUMBER = {3591379},
MRREVIEWER = {Michael Xinxin Zhong},
       DOI = {10.1007/s10801-016-0708-4},
       URL = {https://doi.org/10.1007/s10801-016-0708-4},
}

@article {HJKSS1,
    AUTHOR = {Hwang, Byung-Hak and Jang, Jihyeug and Kim, Jang Soo and Song,
              Minho and Song, U-Keun},
     TITLE = {Refined canonical stable {G}rothendieck polynomials and their
              duals, {P}art 1},
   JOURNAL = {Adv. Math.},
  FJOURNAL = {Advances in Mathematics},
    VOLUME = {446},
      YEAR = {2024},
     PAGES = {Paper No. 109670, 42},
      ISSN = {0001-8708,1090-2082},
   MRCLASS = {05E05 (14M15)},
  MRNUMBER = {4735116},
       DOI = {10.1016/j.aim.2024.109670},
       URL = {https://doi.org/10.1016/j.aim.2024.109670},
}

@article {HJKSS2,
    AUTHOR = {Hwang, Byung-Hak and Jang, Jihyeug and Kim, Jang Soo and Song,
              Minho and Song, U-Keun},
     TITLE = {Refined canonical stable {G}rothendieck polynomials and their
              duals, {P}art 2},
   JOURNAL = {European J. Combin.},
  FJOURNAL = {European Journal of Combinatorics},
    VOLUME = {127},
      YEAR = {2025},
     PAGES = {Paper No. 104166, 34},
      ISSN = {0195-6698,1095-9971},
   MRCLASS = {05E05},
  MRNUMBER = {4899734},
       DOI = {10.1016/j.ejc.2025.104166},
       URL = {https://doi.org/10.1016/j.ejc.2025.104166},
}

@article {Iwao2024,
    AUTHOR = {Iwao, Shinsuke and Motegi, Kohei and Scrimshaw, Travis},
     TITLE = {{Free fermions and canonical {G}}rothendieck polynomials},
   JOURNAL = {Algebr. Comb.},
  FJOURNAL = {Algebraic Combinatorics},
    VOLUME = {7},
      YEAR = {2024},
    NUMBER = {1},
     PAGES = {245--274},
      ISSN = {2589-5486},
   MRCLASS = {05E05 (82B23)},
  MRNUMBER = {4715539},
       DOI = {10.5802/alco.332},
       URL = {https://doi.org/10.5802/alco.332},
}

@article {JKPPS.2026,
    AUTHOR = {Jang, Jihyeug and Kim, Jang Soo and Pan, Jianping and Pappe,
              Joseph and Schilling, Anne},
     TITLE = {Hook-valued tableau uncrowding and tableau switching},
   JOURNAL = {SIAM J. Discrete Math.},
  FJOURNAL = {SIAM Journal on Discrete Mathematics},
    VOLUME = {40},
      YEAR = {2026},
    NUMBER = {1},
     PAGES = {32--51},
      ISSN = {0895-4801,1095-7146},
   MRCLASS = {05E10 (14N15)},
  MRNUMBER = {5011488},
       DOI = {10.1137/24M1708346},
       URL = {https://doi.org/10.1137/24M1708346},
}

@article {BFHTW.2023,
    AUTHOR = {Brubaker, Ben and Frechette, Claire and Hardt, Andrew and
              Tibor, Emily and Weber, Katherine},
     TITLE = {Frozen pipes: lattice models for {G}rothendieck polynomials},
   JOURNAL = {Algebr. Comb.},
  FJOURNAL = {Algebraic Combinatorics},
    VOLUME = {6},
      YEAR = {2023},
    NUMBER = {3},
     PAGES = {789--833},
      ISSN = {2589-5486},
   MRCLASS = {05E05 (05E14 14C35)},
  MRNUMBER = {4614163},
MRREVIEWER = {John\ Machacek},
       DOI = {10.5802/alco.277},
       URL = {https://doi.org/10.5802/alco.277},
}

@article {BuciumasScrimshawGrothendieck,
    AUTHOR = {Buciumas, Valentin and Scrimshaw, Travis},
     TITLE = {Double {G}rothendieck polynomials and colored lattice models},
   JOURNAL = {Int. Math. Res. Not. IMRN},
  FJOURNAL = {International Mathematics Research Notices. IMRN},
      YEAR = {2022},
    NUMBER = {10},
     PAGES = {7231--7258},
      ISSN = {1073-7928,1687-0247},
   MRCLASS = {14M15 (16T25 19L47)},
  MRNUMBER = {4418706},
MRREVIEWER = {Eric\ S.\ Egge},
       DOI = {10.1093/imrn/rnaa327},
       URL = {https://doi-org.stanford.idm.oclc.org/10.1093/imrn/rnaa327},
}

@article {WeigandtASM,
    AUTHOR = {Weigandt, Anna},
     TITLE = {Bumpless pipe dreams and alternating sign matrices},
   JOURNAL = {J. Combin. Theory Ser. A},
  FJOURNAL = {Journal of Combinatorial Theory. Series A},
    VOLUME = {182},
      YEAR = {2021},
     PAGES = {Paper No. 105470, 52},
      ISSN = {0097-3165,1096-0899},
   MRCLASS = {05E14 (05E05 14N15 19D99)},
  MRNUMBER = {4258766},
MRREVIEWER = {Edward\ E.\ Allen},
       DOI = {10.1016/j.jcta.2021.105470}
}

@article {BergeronBilley.1993,
    AUTHOR = {Bergeron, Nantel and Billey, Sara},
     TITLE = {R{C}-graphs and {S}chubert polynomials},
   JOURNAL = {Experiment. Math.},
  FJOURNAL = {Experimental Mathematics},
    VOLUME = {2},
      YEAR = {1993},
    NUMBER = {4},
     PAGES = {257--269},
      ISSN = {1058-6458,1944-950X},
   MRCLASS = {05E99 (05E05 14M15 20C30)},
  MRNUMBER = {1281474},
MRREVIEWER = {Axel\ Kohnert},
       URL = {http://projecteuclid.org/euclid.em/1048516036},
}

@article {KM.2005,
    AUTHOR = {Knutson, Allen and Miller, Ezra},
     TITLE = {Gr\"obner geometry of {S}chubert polynomials},
   JOURNAL = {Ann. of Math. (2)},
  FJOURNAL = {Annals of Mathematics. Second Series},
    VOLUME = {161},
      YEAR = {2005},
    NUMBER = {3},
     PAGES = {1245--1318},
      ISSN = {0003-486X,1939-8980},
   MRCLASS = {05E15 (13C40 13F55 13P10 14M15 14N15)},
  MRNUMBER = {2180402},
MRREVIEWER = {Harry\ Tamvakis},
       DOI = {10.4007/annals.2005.161.1245},
       URL = {https://doi.org/10.4007/annals.2005.161.1245},
}

@article {LLS.2021,
    AUTHOR = {Lam, Thomas and Lee, Seung Jin and Shimozono, Mark},
     TITLE = {Back stable {S}chubert calculus},
   JOURNAL = {Compos. Math.},
  FJOURNAL = {Compositio Mathematica},
    VOLUME = {157},
      YEAR = {2021},
    NUMBER = {5},
     PAGES = {883--962},
      ISSN = {0010-437X,1570-5846},
   MRCLASS = {14M15 (05E05)},
  MRNUMBER = {4252201},
MRREVIEWER = {Praise\ Adeyemo},
       DOI = {10.1112/S0010437X21007028},
       URL = {https://doi.org/10.1112/S0010437X21007028},
}

@article {Lenart.2004,
    AUTHOR = {Lenart, Cristian},
     TITLE = {A unified approach to combinatorial formulas for {S}chubert
              polynomials},
   JOURNAL = {J. Algebraic Combin.},
  FJOURNAL = {Journal of Algebraic Combinatorics. An International Journal},
    VOLUME = {20},
      YEAR = {2004},
    NUMBER = {3},
     PAGES = {263--299},
      ISSN = {0925-9899,1572-9192},
   MRCLASS = {05E15 (05E10 14N15)},
  MRNUMBER = {2106961},
MRREVIEWER = {Gregory\ S.\ Warrington},
       DOI = {10.1023/B:JACO.0000048515.00922.47},
       URL = {https://doi.org/10.1023/B:JACO.0000048515.00922.47},
}

@Article{GMS.2024a,
author={Gold, Sarah
and Mili{\'{c}}evi{\'{c}}, Elizabeth
and Sun, Yuxuan},
title={Crystal Chute Moves on Pipe Dreams},
journal={Annals of Combinatorics},
year={2025},
month={Mar},
day={12},
issn={0219-3094},
doi={10.1007/s00026-025-00747-0},
url={https://doi.org/10.1007/s00026-025-00747-0}
}

@article {MS.2016,
    AUTHOR = {Morse, Jennifer and Schilling, Anne},
     TITLE = {Crystal approach to affine {S}chubert calculus},
   JOURNAL = {Int. Math. Res. Not. IMRN},
  FJOURNAL = {International Mathematics Research Notices. IMRN},
      YEAR = {2016},
    NUMBER = {8},
     PAGES = {2239--2294},
      ISSN = {1073-7928,1687-0247},
   MRCLASS = {14M15 (14N35)},
  MRNUMBER = {3519114},
MRREVIEWER = {Pierre-Emmanuel\ Chaput},
       DOI = {10.1093/imrn/rnv194},
       URL = {https://doi.org/10.1093/imrn/rnv194},
}

@article{MPPS.2020,
    title={A crystal on decreasing factorizations in the $0$-{H}ecke monoid},
    volume={27},
    url={https://www.combinatorics.org/ojs/index.php/eljc/article/view/v27i2p29},
    DOI={10.37236/9168},
    number={2},
    journal={The Electronic Journal of Combinatorics},
    author={Morse, Jennifer and Pan, Jianping and Poh, Wencin and Schilling, Anne},
    year={2020},
    month={May},
    pages={\#P2.29},
}

@article {MR4552711,
    AUTHOR = {Gunna, Ajeeth and Zinn-Justin, Paul},
     TITLE = {Vertex models for {C}anonical {G}rothendieck polynomials and
              their duals},
   JOURNAL = {Algebr. Comb.},
  FJOURNAL = {Algebraic Combinatorics},
    VOLUME = {6},
      YEAR = {2023},
    NUMBER = {1},
     PAGES = {109--162},
      ISSN = {2589-5486},
   MRCLASS = {05E05},
  MRNUMBER = {4552711},
       DOI = {10.5802/alco.235},
       URL = {https://doi.org/10.5802/alco.235},
}

@article {MP.2022,
    AUTHOR = {Mucciconi, Matteo and Petrov, Leonid},
     TITLE = {Spin {$q$}-{W}hittaker polynomials and deformed quantum
              {T}oda},
   JOURNAL = {Comm. Math. Phys.},
  FJOURNAL = {Communications in Mathematical Physics},
    VOLUME = {389},
      YEAR = {2022},
    NUMBER = {3},
     PAGES = {1331--1416},
      ISSN = {0010-3616,1432-0916},
   MRCLASS = {05E05 (33D52 37K10 82C22)},
  MRNUMBER = {4381175},
MRREVIEWER = {Kohei\ Motegi},
       DOI = {10.1007/s00220-021-04279-5},
       URL = {https://doi.org/10.1007/s00220-021-04279-5},
}

@article {GK.2017,
    AUTHOR = {Gorbounov, Vassily and Korff, Christian},
     TITLE = {Quantum integrability and generalised quantum {S}chubert
              calculus},
   JOURNAL = {Adv. Math.},
  FJOURNAL = {Advances in Mathematics},
    VOLUME = {313},
      YEAR = {2017},
     PAGES = {282--356},
      ISSN = {0001-8708,1090-2082},
   MRCLASS = {14M15 (05E05 14F43 19L47 55N20 55N22 82B23)},
  MRNUMBER = {3649227},
MRREVIEWER = {Xin\ Fang},
       DOI = {10.1016/j.aim.2017.03.030},
       URL = {https://doi.org/10.1016/j.aim.2017.03.030},
}

@article {MS.2014,
    AUTHOR = {Motegi, Kohei and Sakai, Kazumitsu},
     TITLE = {{$K$}-theoretic boson-fermion correspondence and melting
              crystals},
   JOURNAL = {J. Phys. A},
  FJOURNAL = {Journal of Physics. A. Mathematical and Theoretical},
    VOLUME = {47},
      YEAR = {2014},
    NUMBER = {44},
     PAGES = {445202, 30},
      ISSN = {1751-8113,1751-8121},
   MRCLASS = {81V35},
  MRNUMBER = {3270564},
MRREVIEWER = {Dmitry\ V.\ Artamonov},
       DOI = {10.1088/1751-8113/47/44/445202},
       URL = {https://doi.org/10.1088/1751-8113/47/44/445202},
}

@book {BumpSchilling.2017,
    AUTHOR = {Bump, Daniel and Schilling, Anne},
     TITLE = {Crystal bases},
      NOTE = {Representations and combinatorics},
 PUBLISHER = {World Scientific Publishing Co. Pte. Ltd., Hackensack, NJ},
      YEAR = {2017},
     PAGES = {xii+279},
      ISBN = {978-981-4733-44-1},
   MRCLASS = {05-01 (05E10 14T05 17B10)},
  MRNUMBER = {3642318},
       DOI = {10.1142/9876},
       URL = {https://doi.org/10.1142/9876},
}

@article {GGL.2016,
    AUTHOR = {Galashin, Pavel and Grinberg, Darij and Liu, Gaku},
     TITLE = {Refined dual stable {G}rothendieck polynomials and generalized
              {B}ender-{K}nuth involutions},
   JOURNAL = {Electron. J. Combin.},
  FJOURNAL = {Electronic Journal of Combinatorics},
    VOLUME = {23},
      YEAR = {2016},
    NUMBER = {3},
     PAGES = {Paper 3.14, 28},
      ISSN = {1077-8926},
   MRCLASS = {05E05 (05A17)},
  MRNUMBER = {3558051},
MRREVIEWER = {Eric\ S.\ Egge},
       DOI = {10.37236/5737},
       URL = {https://doi.org/10.37236/5737},
}

@article {BKSTY.2008,
    AUTHOR = {Buch, Anders Skovsted and Kresch, Andrew and Shimozono, Mark
              and Tamvakis, Harry and Yong, Alexander},
     TITLE = {Stable {G}rothendieck polynomials and {$K$}-theoretic factor
              sequences},
   JOURNAL = {Math. Ann.},
  FJOURNAL = {Mathematische Annalen},
    VOLUME = {340},
      YEAR = {2008},
    NUMBER = {2},
     PAGES = {359--382},
      ISSN = {0025-5831,1432-1807},
   MRCLASS = {05E15 (05E05 05E10 14M15)},
  MRNUMBER = {2368984},
MRREVIEWER = {Frank\ Sottile},
       DOI = {10.1007/s00208-007-0155-6},
       URL = {https://doi.org/10.1007/s00208-007-0155-6},
}

@article {BBBG.2021,
    AUTHOR = {Brubaker, Ben and Buciumas, Valentin and Bump, Daniel and
              Gustafsson, Henrik P. A.},
     TITLE = {Colored five-vertex models and {D}emazure atoms},
   JOURNAL = {J. Combin. Theory Ser. A},
  FJOURNAL = {Journal of Combinatorial Theory. Series A},
    VOLUME = {178},
      YEAR = {2021},
     PAGES = {Paper No. 105354, 48},
      ISSN = {0097-3165,1096-0899},
   MRCLASS = {05E05 (05E10 05E16)},
  MRNUMBER = {4165627},
MRREVIEWER = {Laura\ Colmenarejo},
       DOI = {10.1016/j.jcta.2020.105354},
       URL = {https://doi.org/10.1016/j.jcta.2020.105354},
}

@unpublished{Yang.2025,
    AUTHOR = {Yang, Yingzi},
    TITLE = {Closed colored models and {D}emazure crystals},
     NOTE = {preprint, arXiv:2512.05479}}
\bibliographystyle{alpha}

\end{document}